\documentclass[11pt]{article}

\usepackage{amsmath,amssymb,amsthm,amscd,eucal}
\usepackage{hyperref}
\usepackage{latexsym}
\usepackage{graphicx}
\usepackage[usenames]{color} 
\usepackage[dvipsnames]{xcolor}
\usepackage{times,tikz}
\usetikzlibrary{backgrounds}
\usepackage{paralist}
\usepackage{genyoungtabtikz}
\usepackage{arydshln}

\newtheorem{thm}[equation]{Theorem}
\newtheorem{cor}[equation]{Corollary}

\newtheorem{prop}[equation]{Proposition}
\theoremstyle{definition}
  
\newtheorem{remark}[equation]{Remark}
\newtheorem{example}[equation]{Example}
 
\numberwithin{equation}{section}


\newcommand{\dimm}{\mathsf{dim}\,} 
\newcommand{\para}{\xi}
\newcommand{\qds}{\mathsf{qd}}
\newcommand{\half}{\frac{1}{2}}
\newcommand{\Ind}{\mathsf{Ind}} 
\newcommand{\Res}{\mathsf{Res}} 
\newcommand{\ot}{\otimes} 
\newcommand{\End}{\mathsf{End}} 
\newcommand{\GG}{\mathsf{G}} 
\newcommand{\HH}{\mathsf{H}}  
\newcommand{\KK}{\mathsf{K}}

\newcommand{\perm}{\mathcal{M}}  
\newcommand{\UU}{\mathsf{U}}  
\newcommand{\VV}{\mathsf{V}}  
\newcommand{\WW}{\mathsf{W}}  
\newcommand{\xx}{\mathsf{X}}  
\newcommand{\yy}{\mathsf{Y}}
\newcommand{\Z}{\mathsf{Z}} 
\newcommand{\WZ}{\widehat{\Z}} 
\newcommand{\QP}{\mathsf{QP}} 
\newcommand{\QZ}{\mathsf{QZ}} 
\newcommand{\WQZ}{\widehat{\mathsf {QZ}}} 
\renewcommand{\S}{\mathsf{S}} 
 
\newcommand{\Rs}{\mathsf{R}} 
\newcommand{\CC}{\mathbb C}
\newcommand{\ZZ}{\mathbb Z}
\newcommand{\A}{\mathsf{A}} 
\newcommand{\modu}{\mathsf{M}_n}
\newcommand{\lam}{\lambda}
\newcommand{\al}{\alpha} 
\newcommand{\lb}{\left \{  }
\newcommand{\rb}{ \right \} }
\renewcommand{\P}{\mathsf{P}}
\newcommand{\ds}{\mathsf{d}}
 
\newcommand{\Ms}{\mathfrak M}

\newcommand{\RSK}{ \leftarrow }

\newcommand{\bsym}{\boldsymbol}

  \font \nmm = cmbx12

\usepackage[letterpaper,margin=1.25in]{geometry}

\title{\textmd{Dimensions of  irreducible  modules 
for partition algebras and} \\
\textmd {tensor power multiplicities for  \\
symmetric and alternating groups}}
\author{\textsc{Georgia Benkart} \\
\textit{\small Department of Mathematics} \\ 
\textit{\small University of Wisconsin-Madison}  \\
\textit{\small Madison, WI 53706, USA}\\
\texttt{\small benkart@math.wisc.edu}
\and
\textsc{Tom Halverson\footnote{
The second author gratefully acknowledges partial support from Simons Foundation grant 283311.}}\\
\textit{\small Department of Mathematics, Statistics, and Computer Science}\\
\textit{\small Macalester College}\\
\textit{\small Saint Paul, MN 55105, USA}\\
\texttt{\small halverson@macalester.edu}
\and
\textsc{Nate Harman}\footnote{The third author was partially supported by National Science
Foundation Graduate Research Fellowship grant 1122374.} \\
 \textit{\small Department of Mathematics} \\ 
\textit{\small Massachusetts Institute of Technology}  \\
\textit{\small Cambridge, MA , 02139, USA}\\
\texttt{\small nharman@math.mit.edu}} 
\date{May 27, 2016} 

\begin{document} 
\maketitle   
\begin{abstract}   
The partition algebra $\P_k(n)$ and the symmetric group $\S_n$ are in Schur-Weyl duality on the $k$-fold tensor power $\modu^{\otimes k}$ of the permutation module $\modu$ of $\S_n$, so there is a surjection $\P_k(n) \to \Z_k(n) := \End_{\S_n}(\modu^{\ot k}),$ which is an isomorphism when $n \ge 2k$. 
We prove a dimension formula for the irreducible modules of the centralizer algebra $\Z_k(n)$ in terms of Stirling numbers of the second kind. Via Schur-Weyl duality, these dimensions equal the multiplicities of the irreducible $\S_n$-modules in $\modu^{\otimes k}$. Our dimension expressions hold for any 
$n \geq 1$  and $k\ge0$. Our methods are based on an analog of Frobenius reciprocity that we show holds for the centralizer algebras of arbitrary  finite groups and their subgroups acting on a finite-dimensional module.  This enables us to 
generalize the above result to various analogs of the partition algebra including the centralizer algebra for the alternating group
acting on $\modu^{\ot k}$  and the quasi-partition algebra corresponding to tensor powers of  the reflection representation of $\S_n$.
\end{abstract}

2010 {\it Mathematics Subject Classification} 05E10 (primary), 11B73, 20C15  (secondary). 

{\it Keywords}: Schur-Weyl duality, partition algebra, symmetric group, alternating group, \allowbreak Stirling numbers of the second kind, Bell numbers

\section{Introduction} 
The partition algebras $\P_k(\para)$, $\para \in \CC$,  were introduced by Martin (\cite{M1},\cite{M2},\cite{M3})   
 to study the  Potts lattice model of interacting spins in statistical mechanics.  
As shown by  Jones \cite{J},  there is a Schur-Weyl duality between the 
partition algebra $\P_k(n)$ and the symmetric group $\S_n$ acting as centralizers
of each other on the $k$-fold tensor power $\modu^{\ot k}$  of the $n$-dimensional permutation module
$\modu$ for $\S_n$ over $\CC$.    The surjective algebra homomorphism given in \cite{J} (see also \cite[Thm.\,3.6]{HR}), 
$$\P_k(n) \rightarrow  \Z_k(n) := \End_{\S_n}(\modu^{\ot k}) = \{T \in \End(\modu^{\ot k}) \mid
T (\sigma.u) = \sigma.T(u) \ \  \forall  \sigma \in \S_n, u \in \modu^{\ot k}\},$$  
is an isomorphism when $n \geq 2k$.     
 
The partition algebra $\P_k(\para)$  for $k \geq 1$ has a basis over $\CC$ indexed by set partitions
of the set $\{1,2,\ldots,2k\}$ into disjoint nonempty blocks.  An example of such a set partition
for $k = 7$ is $\big\{1,9,11 \mid  2,13 \mid 3 \mid 4,7,8\mid 5,6,12,14\big\}$, which has 5 blocks. 
The Stirling number $\lb 2k \atop r\rb$ counts the number of ways to partition $2k$ objects into $r$ nonempty 
disjoint blocks, so it follows that  
\vspace{-.25cm}
$$\dimm \P_k(\para) = \sum_{r=1}^{2k}  \lb 2k \atop r\rb = \mathsf{B}(2k), \quad \hbox {\rm(the \ $(2k)$th Bell number).} 
$$
 
  In $\P_{k+1}(\para)$,  the basis elements indexed by set partitions which have $k+1$ and $2(k+1)$ in the same block 
 form a subalgebra $\P_{k+\half}(\para)$ with $\dimm \P_{k+\half}(\para) = \mathsf{B}(2k+1)$.   If we regard  $\modu$ as a module for the symmetric group
$\S_{n-1}$ by restriction,   
there is a surjective algebra homomorphism $\P_{k+\half}(n) \rightarrow \Z_{k+\half}(n) :=  \End_{\S_{n-1}}(\modu^{\ot k})$, 
which is an isomorphism if $n \geq 2k$.   These intermediate algebras play an important role in 
understanding the structure and representation theory  of partition algebras (see for example, \cite{MR,HR}),
and they are a crucial component of the work in this paper.

The irreducible modules  for $\P_k(n)$ and $\P_{k+\half}(n)$ are labeled by partitions 
$\nu$  of $r$, where $r$ is an integer satisfying  $0 \leq r \leq k$.   Since the irreducible modules  $\S_n^\lambda$
for $\S_n$ are indexed by partitions $\lambda$ of $n$,  Schur-Weyl duality implies that  
 the  irreducible modules for  $\Z_k(n)$ are also indexed by partitions $\lambda$ of  $n$,  and for
$\Z_{k+\half}(n)$, by partitions $\mu$ of $n-1$.   The modules $\S_n^\lam$ (resp. $\S_{n-1}^\mu$)
occurring in $\modu^{\ot k}$ are indexed by partitions with the property that
the partition $\nu = \lambda^\#$  (resp. $\nu = \mu^\#$)  that results from  
deleting the largest part  of $\lambda$  (resp. of $\mu$)  must satisfy  $0 \leq \vert \nu \vert \leq k$,
where $\vert \nu \vert$ is the sum of the parts of $\nu$.    

In this paper,  we
\begin{itemize} 
\item establish general restriction/induction results for centralizer algebras,  proving in Theorem \ref{T:res-ind} that
an analog of  Frobenius
reciprocity for groups holds for  their centralizer algebras; 
\item give  restriction/induction Bratteli diagrams for the symmetric group-subgroup pair 
$(\S_n, \S_{n-1})$ and for the alternating group-subgroup pair  $(\A_n, \A_{n-1})$;
\item use  the reciprocity results to determine expressions for the dimensions of the irreducible modules  for 
$\Z_k(n)$, and $\Z_{k+\half}(n)$ (in Theorem \ref{T:cent}(a) and (b)),  and for $\P_k(\para)$, $\P_{k+\half}(\para)$ (in
Corollary \ref{C:partdims});  
 \item  determine the dimensions of the centralizer algebras $\Z_k(n)$ and $\Z_{k+\half}(n)$ (in Theorem~\ref{T:cent}(c) and (d));
 \item  give a combinatorial proof in Section \ref{ss:bijection}  of the dimension formula in Theorem \ref{T:cent}(a) by exhibiting a bijection between paths in the Bratteli diagram (aka vacillating tableaux) and pairs $(P,T)$ consisting of set partitions $P$ of $\{1,\dots,k\}$ and semistandard tableaux $T$ whose entries depend on $P$ (this bijection holds for all $k \geq 0$ and $n \geq 1$ and extends the one in \cite{CDDSY}, which was valid for $n \geq 2k$);
 \item  apply the restriction/induction results to the pair $(\S_n,\A_n)$  (resp. $(\S_{n-1},\A_{n-1})$)  
to determine the dimensions of the irreducible 
 modules for the centralizer algebras $\WZ_k(n):=\End_{\A_n}(\modu^{\ot k})$ and $\WZ_{k+\half}(n): =
 \End_{\A_{n-1}}(\modu^{\ot k})$  (in Theorem \ref{T:Ardims} (a) and (b));
 \item  determine dimension formulas for the centralizer algebras $\WZ_{k}(n)$ and $\WZ_{k+\half}(n)$  
 (Theorem \ref{T:Ardims}); 
 \item compute the dimensions of the irreducible modules for the centralizer algebras
 $\QZ_k(n): = \End_{\S_n}(\Rs_n^{\ot k})$, where $\Rs_n : =  \S_n^{[n-1,1]}$ is the $(n-1)$-dimensional
 irreducible reflection module of $\S_n$ corresponding to the partition $[n-1,1]$ of $n$,  
and for  their relatives $\QZ_{k+\half}(n)$, 
 $\WQZ_k(n)$,  and $\WQZ_{k+\half}(n)$ (in Theorem~\ref{T:QPcent}) and give Bratteli diagrams corresponding to $\Rs_n$ for  the group-subgroup pairs $(\S_n, \S_{n-1})$ and  $(\A_n, \A_{n-1})$. 
\end{itemize} 
By Schur-Weyl duality, the dimension of an irreducible module for the centralizer algebra equals the 
multiplicity of the corresponding irreducible module for the group. Consequently, our dimension formulas also
\begin{itemize}
\item determine the multiplicities of irreducible modules for $\S_n, \S_{n-1}, \A_n,$ and  $\A_{n-1}$ in $\modu^{\ot k}$ and in $\mathsf{R}_n^{\ot k}$ for
all $n,k \in \ZZ_{>0}$ (in Theorems \ref{T:cent} (a),(b), \ref{T:Ardims}, and \ref{T:QPcent}).
\end{itemize}

A preliminary version of this paper \cite{BH1}, posted on the arXiv by the first two authors, 
established dimension formulas for centralizer algebras as alternating sums  of expressions  involving Stirling numbers of the second kind and the number of standard tableaux compatible with certain \emph{$r$-sequences} of partitions for $\lambda$.  Upon seeing this result, the third author suggested the approach that we adopt in this paper for Theorem \ref{T:cent}\,(a).    As a consequence of this 
alternate way of computing the dimensions of the irreducible modules for $\Z_k(n)$,  we are able in this work to express all 
of the dimension formulas as positive sums using Stirling numbers of the second kind and Kostka numbers. It remains an open question to determine the relation between these two approaches.

The dimensions of the centralizer algebras $\Z_k(n)$ and $\WZ_k(n)$  were determined previously and can be found in   \cite{HR} and \cite{Bl1,Bl2},
respectively.
In this work, they are direct consequences of  the dimension formulas for the irreducible modules.  This
is a general phenomenon:   If $\Z_{k}(\GG) := \End_{\GG}(\xx^{\ot k})$  for a self-dual  
module $\xx$ of a group $\GG$, then  $\dimm \Z_k(\GG) = \mathsf{dim}\left(\xx^{\ot 2k}\right)^\GG$, where
$\left(\xx^{\ot 2k}\right)^\GG$ is the space of  $\GG$-invariants in $\xx^{\ot 2k}$.  Therefore, $\dimm \Z_{k}(\GG)$ is  the multiplicity of the trivial $\GG$-module $\GG^{\bullet}$
 in $\xx^{\ot 2k}$; equivalently, by Schur-Weyl duality,  it  is the dimension of the irreducible module associated 
 to $\GG^{\bullet}$  for  the centralizer algebra $\Z_{2k}(\GG)$ (see Section 2 for details).

Motivated by the work of  Goupil and Chauve \cite{GC} on Kronecker tableaux and Kronecker coefficients, Daugherty and Orellana in \cite{DO}
introduced the  {\it quasi-partition algebras} $\mathsf{QP}_k(\para)$, $\para \in \CC$,  and showed
that  there is  a surjection $\mathsf{QP}_k(n) \rightarrow \QZ_k(n)
= \End_{\S_n}(\Rs_n^{\ot k})$ for  $\Rs_n = \S_n^{[n-1,1]}$,  which 
is an isomorphism when $n \ge 2k$.   The dimensions for the irreducible modules for $\mathsf{QP}_k(\para)$,  with
$\para$ generic, are the same as the dimensions for $n \geq 2k$, and so are given by  the dimension formulas in Section 7 below.   These expressions differ from the ones that appear 
in \cite{DO}, which were based on results in \cite{GC}  and  hold whenever $n \geq 2k$,  as  the ones in Section 7 are  valid for all $k$ and $n$.

Using exponential generating functions from \cite{GC},  Ding  \cite{D} derived a formula for  the multiplicity of the irreducible $\S_n$-module $\S_n^\lam$ indexed by the partition
$\lam=[\lam_1,\dots,\lam_n]$  in $\modu^{\ot k}$ when $1 \leq k \leq n-\lambda_2$ 
and used that to obtain an expression for the multiplicity of  $\S_n^\lam$ in tensor powers 
$\Rs_n^{\ot k}$ of its reflection module $\Rs_n = \S_n^{[n-1,1]}$. 
The first  is a special case of part (a) of Theorem \ref{T:cent}  below and the
second a special case of Theorem \ref{T:QPcent}. As shown in \cite[Sec.~3]{D}, when $1 \le k \leq n-\lam_2$, these multiplicity formulas can be used 
to bound the mixing time of a Markov chain on $\S_n$.

\medskip 
\section{Restriction/Induction and Dimensions}\label{sec:restrictioninduction}
We begin with some general results on restriction and induction for centralizer algebras 
and then apply these results to the group-subgroup pairs   $(\S_n,\S_{n-1})$, $(\S_n,\A_n)$, and $(\A_n,\A_{n-1})$
acting on the $k$-fold tensor power of the $n$-dimensional permutation module $\modu$.  This will enable us to determine
the dimension of the centralizer algebras and their irreducible modules. 
 

Suppose $\GG$ is a finite group and $\HH$ is a subgroup of $\GG$.    Assume $\{\GG^\lam\}_{\lam \in \Lambda_\GG}$
and $\{\HH^\alpha\}_{\alpha\in \Lambda_\HH}$ are  the corresponding sets of irreducible modules for
these groups over $\mathbb C$.    We suppose that the restriction from $\GG$ to $\HH$ on $\GG^\lam$ is given by
\begin{equation}\label{eq:res} \mathsf{Res}_\HH^\GG(\GG^\lam) = \bigoplus_{\alpha \in \Lambda_\HH} c_\alpha^\lam \  \HH^\al. \end{equation} 
Then by Frobenius reciprocity, induction from $\HH$ to $\GG$ is given by 
\begin{equation}\label{eq:ind} \mathsf{Ind}_{\HH}^{\GG}(\HH^\alpha) =  \bigoplus_{\lam \in \Lambda_\GG } c_\alpha^\lam \ \GG^\lam.\end{equation} 

Assume now that  $\xx$ is a finite-dimensional $\GG$-module,  and consider the centralizer algebra
 $\Z_\xx(\GG) = \End_{\GG}(\xx) = \{T \in \End(\xx) \mid T(g.x) = g.T(x), \ \forall g \in \GG, x \in \xx\}$.  Regarding  $\xx$ as a module for the subgroup $\HH$ of $\GG$ by restriction,
we have reverse inclusion of the centralizer algebras
$\Z_\xx(\GG) \subseteq  \Z_\xx(\HH) = \End_{\HH}(\xx)$.    Let $\Lambda_{\xx,\GG}$ (resp. 
$\Lambda_{\xx,\HH}$) denote the subset of $\Lambda_\GG$  (resp. of $\Lambda_\HH$) 
 corresponding to the  irreducible $\GG$-modules (resp. $\HH$-modules)  which occur in $\xx$ with
multiplicity at least one.  
Then  \emph{Schur-Weyl duality} implies the following: 
  \begin{itemize}
  \item  the irreducible $\Z_\xx(\GG)$-modules $\Z_{\xx,\GG}^\lam$  are in bijection with the elements of $\lam \in \Lambda_{\xx,\GG}$;
 \item   the decomposition of \ $\xx$ into irreducible $\GG$-modules  is given by  
\begin{equation}\label{eq:X2G} \xx  \,\cong \,  \bigoplus_{\lam \in \Lambda_{\xx,\GG}}   \mathsf{d}_{\xx,\GG}^\lam \ \GG^\lam, \hskip.35in  
\hbox{where \ $\mathsf{d}_{\xx,\GG}^\lam = \dimm \Z_{\xx,\GG}^\lam$};
\end{equation} 
\item  the decomposition of \ $\xx$ into irreducible $\Z_\xx(\GG)$-modules
is given by   
\begin{equation}\label{eq:X2Z} \xx \, \cong \,  \bigoplus_{\lam \in \Lambda_{\xx,\GG}}
  \mathsf {d}_{\GG^\lam} \,  \Z_{\xx,\GG}^\lam, \hskip.3in
\hbox{where \ $\mathsf{d}_{\GG^\lam} = \dimm \GG^\lam;$} \hskip.15in
  \end{equation} 

\item  as a bimodule for \ $\GG \times \Z_\xx(\GG)$, 
\begin{equation}\label{eq:bimod}   \xx   \cong  \bigoplus_{\lam \in \Lambda_{\xx,\GG}}   \big( \GG^\lam \ot 
\Z_{\xx,\GG}^\lam \big);
 \end{equation} 
  \item $\Z_\xx(\GG)$ is a finite-dimensional semisimple associative algebra  and 
\begin{equation}\label{eq:sumofsquares}
 \dimm\Z_\xx(\GG) = \sum_{\lam \in \Lambda_{\xx,\GG}}  \big(\dimm \, \Z_{\xx,\GG}^\lam\big)^2 =  \sum_{\lam \in \Lambda_{\xx,\GG}}  (\mathsf{d}_{\xx,\GG}^\lam)^2.
\end{equation}
\end{itemize}

There is a corresponding Frobenius reciprocity  for centralizer algebras of the group-subgroup pair $(\GG,\HH)$, as indicated in the next result.
\medskip 

\begin{thm}\label{T:res-ind}  For  a finite-dimensional  $\GG$-module $\xx$,  let $\Z_\xx(\GG) = \End_{\GG}(\xx)$
and $\Z_\xx(\HH) = \End_{\HH}(\xx)$.
Let $\Lambda_{\xx,\GG}$ (resp. $\Lambda_{\xx,\HH}$)  be the set of indices $\lam \in \Lambda_\GG$ (resp. $\alpha  \in \Lambda_\HH$)  such that $\GG^\lam$ (resp. $\HH^\alpha$)  occurs in $\xx$ with multiplicity $\geq 1$,
and let $\Z_{\xx,\GG}^{\lam}$ (resp. $\Z_{\xx,\HH}^{\al}$)
denote the corresponding irreducible $\Z_{\xx}(\GG)$-module  (resp. $\Z_{\xx}(\HH)$-module).    
Assume  $c_\alpha^\lam$ is as in  \eqref{eq:res} and \eqref{eq:ind} above.  
 Then the following hold: 
 \begin{itemize} \item[{\rm (a)}]    $\Res_{\Z_\xx(\GG)}^{\Z_\xx(\HH)} \big(\Z_{\xx,\HH}^\al \big) =  \displaystyle{\bigoplus_{\lam \in \Lambda_{\xx,\GG}}} \,  c_\alpha^\lam \ \Z_{\xx,\GG}^\lam$.
  \item[{\rm (b)}]     For $\alpha \in \Lambda_{\xx, \HH}$, \  $\ds_{\xx,\HH}^\al  = \displaystyle{\sum_{\lam \in \Lambda_{\xx,\GG}}}\  c_\alpha^\lam \ 
 \ds_{\xx,\GG}^\lam$,\quad where \ $\ds_{\xx,\HH}^\al: = \dimm \Z_{\xx,\HH}^\al$ \  and \ $\ds_{\xx,\GG}^\lam :=   \dimm \Z_{\xx,\GG}^\lam$. 
\item[{\rm (c)}]   $\Ind_{\Z_{\xx}(\GG)}^{\Z_{\xx}(\HH)}\big(\Z_{\xx,\GG}^\lam \big) \, := \ \Z_{\xx}(\HH) \ot_{\Z_{\xx}(\GG)} 
\Z_{\xx,\GG}^\lam \ = \  \displaystyle{\bigoplus_{\al \in \Lambda_{\xx,\HH}}} \, c_\alpha^\lam \ \Z_{\xx,\HH}^\al.$  
\item[{\rm (d)}]  As a $\Z_{\xx}(\GG)$-module (via multiplication 
on the left),   $$\Z_{\xx}(\HH) = \bigoplus_{\lam \in \Lambda_{\xx,\GG}} \Bigg(\sum_{ \al \in \Lambda_{\xx,\HH}}\, 
c_\alpha^\lam \  {\ds}_{\xx,\HH}^\al \Bigg)\Z_{\xx,\GG}^\lam.$$ 
\item[{\rm (e)}] Assume $\yy$ is an  $\HH$-module and set  $\xx = \mathsf{Ind}_{\HH}^{\GG}(\yy)$. 
Let $ \Z_{\yy}(\HH) = \End_{\HH}(\yy)$,  and let  $\Z_{\yy,\HH}^\al$, $\al \in \Lambda_{\yy,\HH}$,  be the
irreducible $\Z_{\yy}(\HH)$-modules.   Then  
for \, $\lam \in \Lambda_{\xx, \GG}$,  \ 
$$\dimm \Z_{\xx,\GG}^\lam = \sum_{\al \in \Lambda_{\yy,\HH}} \ c_\al^\lam \ \dimm \Z_{\yy,\HH}^\al.$$
 \end{itemize}    \end{thm}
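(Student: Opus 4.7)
The plan is to use the bimodule decomposition (\ref{eq:bimod}) simultaneously for both $\GG$ and $\HH$ acting on $\xx$. Since $\HH$ is a subgroup of $\GG$, we have $\Z_\xx(\GG) \subseteq \Z_\xx(\HH)$, so both decompositions
$$
\xx \; \cong \; \bigoplus_{\lam \in \Lambda_{\xx,\GG}} \GG^\lam \ot \Z_{\xx,\GG}^\lam
\; \cong \; \bigoplus_{\al \in \Lambda_{\xx,\HH}} \HH^\al \ot \Z_{\xx,\HH}^\al
$$
are $\HH \times \Z_\xx(\GG)$-bimodules in a compatible way. The whole proof is a matter of comparing $\HH$-isotypic components on the two sides and keeping track of the centralizer-side structure.

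For part (a), I would restrict the first decomposition to $\HH$ and apply (\ref{eq:res}) to obtain
$\xx \cong \bigoplus_{\lam,\al} c_\al^\lam\, \HH^\al \ot \Z_{\xx,\GG}^\lam$ as an $\HH \times \Z_\xx(\GG)$-bimodule. Pulling out the $\HH^\al$-isotypic component on both sides, and using that the bimodule structure lets me recover the centralizer-side module as the multiplicity space, I get an isomorphism of $\Z_\xx(\GG)$-modules $\Z_{\xx,\HH}^\al \cong \bigoplus_{\lam} c_\al^\lam\, \Z_{\xx,\GG}^\lam$, which is precisely (a). Part (b) then follows immediately by taking dimensions.

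For part (c), since $\Z_\xx(\GG)$ and $\Z_\xx(\HH)$ are finite-dimensional semisimple algebras (by (\ref{eq:sumofsquares}) and semisimplicity of $\xx$), I can apply the standard Frobenius reciprocity adjunction
$\mathsf{Hom}_{\Z_\xx(\HH)}(\Ind(\Z_{\xx,\GG}^\lam), \Z_{\xx,\HH}^\al) \cong \mathsf{Hom}_{\Z_\xx(\GG)}(\Z_{\xx,\GG}^\lam, \Res(\Z_{\xx,\HH}^\al))$ and use (a) to read off that the multiplicity of $\Z_{\xx,\HH}^\al$ in the induced module is $c_\al^\lam$. Part (d) follows by writing the regular representation of $\Z_\xx(\HH)$ as $\bigoplus_\al \ds_{\xx,\HH}^\al\, \Z_{\xx,\HH}^\al$ and restricting via (a). For part (e), I would decompose the $\HH$-module $\yy \cong \bigoplus_\al \ds_{\yy,\HH}^\al\, \HH^\al$, apply $\Ind_\HH^\GG$, use (\ref{eq:ind}) to get $\xx = \Ind_\HH^\GG(\yy) \cong \bigoplus_\lam \bigl(\sum_\al c_\al^\lam\, \ds_{\yy,\HH}^\al\bigr) \GG^\lam$, and read off the multiplicity of $\GG^\lam$ via (\ref{eq:X2G}).

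The main conceptual point, and the only place real care is needed, is justifying that the $H$-isotypic components of the two decompositions of $\xx$ can be identified as $\Z_\xx(\GG)$-modules---i.e.\ that the multiplicity space $\Z_{\xx,\HH}^\al$ inherits a natural $\Z_\xx(\GG)$-action that matches what comes out of the first decomposition. This is a double-centralizer / semisimplicity fact, but it drives everything; once (a) is in hand, parts (b)--(e) are formal consequences of semisimplicity and the $\HH$-level Frobenius reciprocity (\ref{eq:res})--(\ref{eq:ind}).
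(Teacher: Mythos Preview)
Your proposal is correct and follows essentially the same approach as the paper: both compare the $(\HH \times \Z_\xx(\GG))$-bimodule decompositions coming from the two Schur--Weyl decompositions of $\xx$ to obtain (a) and (b), invoke the Hom--tensor adjunction \eqref{eq:stand} together with (a) for (c), use the Wedderburn decomposition of $\Z_\xx(\HH)$ plus (a) for (d), and decompose $\yy$ then apply \eqref{eq:ind} for (e). The paper resolves your ``main conceptual point'' exactly as you anticipate, by appealing to the uniqueness of the $(\HH \times \Z_\xx(\GG))$-bimodule decomposition of $\xx$.
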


\begin{proof} \ (a) and (b): \   By Schur-Weyl duality,    $\xx \cong \bigoplus_{\lam \in \Lambda_{\xx,\GG}}  \left(\GG^\lam \ot \Z_{\xx,\GG}^\lam \right)$
as a $(\GG \times \Z_{\xx}(\GG))$-bimodule.   Therefore,  as an  $(\HH  \times \Z_{\xx}(\GG))$-bimodule,
\begin{equation}\label{eq:bimod2} \xx  \cong \sum_{\lam \in \Lambda_{\xx,\GG}} \bigg( \sum_{\al \in \Lambda_\HH}  c_\al^\lam \  \HH^\al \bigg) \ot  \Z_{\xx,\GG}^\lam
 \cong \sum_{\al \in \Lambda_\HH} \HH^\al \ot  \bigg(\sum_{\lam \in \Lambda_{\xx,\GG}} c_\al^\lam \   \Z_{\xx,\GG}^{\lam} \bigg). \end{equation}  
This says that the $\HH$-module \, $\HH^\al$ \, occurs as a summand of \, $\xx$\, with multiplicity equal to  \break
$\sum_{\lam \in \Lambda_{\xx,\GG}} \  c_\al^\lam \  \dimm \Z_{\xx,\GG}^\lam$.  
But from the decomposition 
\begin{equation}\label{eq:hdecomp} \xx  \cong  \bigoplus_{\al \in \Lambda_{\xx,\HH}} \bigg ( \HH^\al  \ot \Z_{\xx,\HH}^{\al} \bigg),\end{equation}
 we know that the only $\HH$-summands occurring in $\xx$ are those 
with $\al \in \Lambda_{\xx,\HH}$,  and $\HH^\al$ has multiplicity  $\dimm \Z_{\xx,\HH}^\al$ in $\xx$. 
Therefore, the sum in \eqref{eq:bimod2}  must be over $\al \in \Lambda_{\xx,\HH}$, and we have 
$\dimm \Z_{\xx,\HH}^\al = \sum_{\lam \in \Lambda_{\xx,\GG}} c_\al^\lam \  \dimm \Z_{\xx,\GG}^\lam$,
as claimed in (b).    Moreover,  the restriction of the  $\big(\HH \times \Z_{\xx}(\HH)\big)$-decomposition of $\xx$ in \eqref{eq:hdecomp}
to  $\HH \times  \Z_{\xx}(\GG)$ 
gives $\xx \cong  \bigoplus_{\al \in \Lambda_{\xx,\HH}}  \HH^\al  \ot \mathsf{Res}_{\Z_\xx(\GG)}^{\Z_\xx(\HH)} \left(\Z_{\xx,\HH}^{\al}\right).$   Since the decomposition of $\xx$
as a  $\big(\HH \times  \Z_{\xx}(\GG)\big)$-bimodule is unique,
 $\Res_{\Z_\xx(\GG)}^{\Z_\xx(\HH)}\left(\Z_{\xx,\HH}^\al \right)$  = $ \bigoplus_{\lam \in \Lambda_{\xx,\GG}}  c_\al^\lam \   \Z_{\xx,\GG}^\lam$ must hold,  as asserted in (a).  
Note that part (b) is just the dimension version of this relation.

For part (c),   we use the following standard result.  Assume $\mathrm{A}$ is an algebra and $\mathrm{B}$ is a subalgebra of 
$\mathrm{A}$.  
Let $\mathrm{W}$ be an $\mathrm{A}$-module and $\mathrm{V}$ be a $\mathrm{B}$-module.    Then 
\begin{equation}\label{eq:stand}\mathsf{Hom}_{\mathrm A}(\mathrm{A} \ot_{\mathrm B} \mathrm V, \mathrm W) = 
\mathsf{Hom}_{\mathrm B}\big(\mathrm V, \Res_{\mathrm B}^{\mathrm A}(\mathrm W) \big).\end{equation}
Now suppose  $\mathrm{A} = \Z_{\xx}(\HH)$, \  $\mathrm{B} = \Z_{\xx}(\GG)$, $\mathrm{V} = \Z_{\xx,\GG}^\lam$,
and $\mathrm{W} = \Z_{\xx,\HH}^\al$.     Then  
\begin{align*} \mathsf{Hom}_{\mathrm A}\big(\Ind_{\mathrm B}^{\mathrm{A}}(\Z_{\xx,\GG}^\lam), \Z_{\xx,\HH}^\al \big) &= 
\mathsf{Hom}_{\mathrm B}\big(\Z_{\xx,\GG}^\lam, \Res_{\mathrm B}^{\mathrm A}(\Z_{\xx,\HH}^\al) \big)
= 
\mathsf{Hom}_{\mathrm B}\left(\Z_{\xx,\GG}^\lam, \  \bigoplus_{\mu \in \Lambda_{\xx,\GG}} c_\al^\mu \  \Z_{\xx,\GG}^\mu\right).
\end{align*}
Taking dimensions on both sides shows that \  $\dimm \mathsf{Hom}_{\mathrm A}\big(\Ind_{\mathrm B}^{\mathrm{A}}(\Z_{\xx,\GG}^\lam), \Z_{\xx,\HH}^\al \big) = c_\al^\lam$, and thus, \break 
$\Ind_{\Z_{\xx}(\GG)}^{\Z_{\xx}(\HH)}\big(\Z_{\xx,\GG}^\lam\big) = \bigoplus_{\al \in \Lambda_{\xx,\HH}} \ c_\al^\lam \  \Z_{\xx,\HH}^\al$.  

(d)  Since $\Z_{\xx}(\HH)$ is a semisimple algebra, Wedderburn theory
tells us   $\Z_{\xx}(\HH) = \bigoplus_{\al \in \Lambda_{\xx,\HH}}  \ds_{\xx,\HH}^\al \  \Z_{\xx,\HH}^\al$,
where $\ds_{\xx,\HH}^\al = \dimm \Z_{\xx,\HH}^\al$.  
Restricting to $\Z_{\xx}(\GG)$ gives 
$$\Res_{\Z_{\xx}(\GG)}^{\Z_{\xx}(\HH)}\,\big(\Z_{\xx}(\HH)\big) =
\bigoplus_{\al \in \Lambda_{\xx,\HH}}  \ds_{\xx,\HH}^\al \ \Res_{\Z_{\xx}(\GG)}^{\Z_{\xx}(\HH)}\big(\Z_{\xx,\HH}^\al\big)
=  \bigoplus_{\lam \in \Lambda_{\xx,\GG}}  \left( \bigoplus_{\al \in \Lambda_{\xx,\HH}} c_\alpha^\lam \ \ds_{\xx,\HH}^\al \right) \Z_{\xx,\GG}^\lam,$$
by part (a).

(e) The proof here  is similar in spirit to that in parts (a) and (b).   With  $\yy$ an $\HH$-module,  suppose  $\yy= \bigoplus_{\al \in \Lambda_{\yy,\HH}} \ y_\al \HH^{\al}$,  and  assume 
$\xx\, :=\, \mathsf{Ind}_{\HH}^{\GG}(\yy) =
\bigoplus_{\lam \in \Lambda_{\xx,\GG}}  x_\lam \GG^\lam$.    Then 
\begin{align*} \xx &=  \mathsf{Ind}_{\HH}^{\GG}(\yy) =\sum_{\al \in \Lambda_{\yy,\HH}}  y_\lam \ \mathsf{Ind}_{\HH}^{\GG}(\HH^\al)
=\sum_{\al \in \Lambda_{\yy,\HH}}  y_\al \Bigg( \sum_{\lam \in \Lambda_\GG}  c_\al^\lam \ \GG^\lam \Bigg) 
= \sum_{\lam \in \Lambda_\GG} \Bigg ( \sum_{\al \in \Lambda_{\yy,\HH}} \ c_\al^\lam \ y_\al \Bigg) \GG^\lam,  \end{align*} 
so that the sum must be over $\lam \in \Lambda_{\xx,\GG}$, and 
$$\dimm \Z_{\xx,\GG}^\lam = x_\lam = \sum_{\al \in \Lambda_{\yy,\HH}}   c_\al^\lam \  y_\al
 = \sum_{\al \in \Lambda_{\yy,\HH}} \ c_\al^\lam \  \dimm \Z_{\yy,\HH}^{\al}$$ for all $\lam \in \Lambda_{\xx,\GG}$.  \end{proof}

The following  proposition  will be used in Section \ref{section:quasi} to relate multiplicities in the tensor power of the reflection module of the symmetric group  to multiplicities in tensor powers of the permutation module. Assume $\GG$ is a finite group and $\WW$ is a $\GG$-module over $\CC$.  Let  $\VV  = \GG^\bullet \oplus \WW$ be the extension of $\WW$ by the trivial $\GG$-module  $\GG^\bullet$.  Define $\Z_k(\GG)  = \End_\GG(\VV^{\otimes k})$  and  
$\QZ_k(\GG) = \End_\GG(\WW^{\otimes k})$, and let $\Lambda_{k,\GG} \subseteq \Lambda(\GG)$ (resp., $\mathsf{q}\Lambda_{k,\GG}\subseteq \Lambda(\GG)$)  index the irreducible $\GG$-modules that appear in $\VV^{\otimes k}$ (resp.,  in $\WW^{\otimes k}$) with multiplicity at least one.    Let  $\Z_k^\lambda$ (resp., $\QZ_k^\lambda$) denote the irreducible $\Z_k(\GG)$-module  (resp., $\QZ_k(\GG)$-module) indexed by  $\lambda \in \Lambda_{k,\GG}$ (resp., $\lambda\in\mathsf{q}\Lambda_{k,\GG}$).

\begin{prop} \label{prop:QuasiBinomial} With notation as in the previous paragraph,
\begin{enumerate}
\item[{\rm (a)}] 
$\displaystyle{\dimm \QZ^\lambda_{k} = \sum_{\ell = 0}^k (-1)^{k-\ell}\binom{k}{\ell} \dimm\Z^\lambda_\ell,}$

\item[{\rm (b)}] If $\WW$ is a self-dual $\GG$-module, then \ 
$\displaystyle{\dimm \QZ_{k}(\GG) =  \dimm  \QZ_{2k}^{\bullet} = \sum_{\ell = 0}^{2k} (-1)^{2k-\ell}\binom{2k}{\ell} \dimm\Z_\ell^{ \bullet}}$,  where $ \QZ_{2k}^{\bullet}$ is the irreducible $\QZ_{k}(\GG)$-module corresponding to $\GG^\bullet$; equivalently, the
space of $\GG$-invariants in $\WW^{\ot 2k}$. 
\end{enumerate}
\end{prop}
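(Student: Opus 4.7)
The plan is to prove part (a) by a binomial expansion of the tensor power $\VV^{\otimes k}$ followed by binomial inversion, and then to deduce part (b) by combining part (a) with the general principle, already invoked in the introduction, that for a self-dual module $\WW$ the dimension of $\End_\GG(\WW^{\otimes k})$ equals the multiplicity of the trivial module in $\WW^{\otimes 2k}$.

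For part (a), the starting observation is that since $\VV = \GG^\bullet \oplus \WW$ and tensoring with $\GG^\bullet$ is the identity functor on $\GG$-modules, a distributivity-of-tensor-over-direct-sum argument (the ``binomial theorem'' for direct sums) yields
\begin{equation*}
\VV^{\ot k} \,\cong\, \bigoplus_{\ell = 0}^{k} \binom{k}{\ell}\, \WW^{\ot \ell},
\end{equation*}
where the $\binom{k}{\ell}$ counts the choices of which $\ell$ of the $k$ tensor factors are $\WW$ (the remaining factors being $\GG^\bullet$). Extracting the multiplicity of $\GG^\lam$ on both sides, and using Schur-Weyl duality to identify these multiplicities with dimensions of the corresponding irreducibles for the centralizer algebras (equation \eqref{eq:X2G}), I would obtain
\begin{equation*}
\dimm \Z^\lam_k \,=\, \sum_{\ell=0}^{k} \binom{k}{\ell}\, \dimm \QZ^\lam_\ell,
\end{equation*}
with the convention that $\dimm \QZ^\lam_\ell = 0$ if $\lam \notin \mathsf{q}\Lambda_{\ell,\GG}$. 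Part (a) then follows immediately by the standard binomial inversion formula.

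For part (b), I would first invoke the self-duality hypothesis: since $\WW \cong \WW^*$ as $\GG$-modules, we have $\End(\WW^{\ot k}) \cong \WW^{\ot k}\otimes (\WW^{\ot k})^* \cong \WW^{\ot 2k}$ as $\GG$-modules, and taking $\GG$-invariants gives $\dimm \QZ_k(\GG) = \dimm (\WW^{\ot 2k})^\GG$. This in turn equals the multiplicity of $\GG^\bullet$ in $\WW^{\ot 2k}$, which by Schur-Weyl duality is $\dimm \QZ_{2k}^{\bullet}$. Applying part (a) with $\lam = \bullet$ and with $k$ replaced by $2k$ then yields the stated formula.

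The two steps require essentially no nontrivial input beyond the Schur-Weyl dictionary already recorded in \eqref{eq:X2G}--\eqref{eq:sumofsquares}; if there is any point worth care, it is the bookkeeping with the ``empty'' partition case (so that $\dimm\Z^\bullet_0 = 1 = \dimm\QZ^\bullet_0$ and $\dimm\QZ^\lam_0 = 0$ for $\lam \neq \bullet$), which is needed for the binomial inversion to be applied as a genuine Möbius-type identity on $\{0,1,\dots,k\}$ rather than only on indices where both sides are \emph{a priori} nonzero.
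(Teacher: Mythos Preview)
Your proof is correct and follows essentially the same approach as the paper: both expand $\VV^{\ot k}=(\GG^\bullet\oplus\WW)^{\ot k}$ binomially, apply binomial inversion, and read off the multiplicity of $\GG^\lam$ via Schur--Weyl duality, with part~(b) deduced from part~(a) at $\lam=\bullet$ using self-duality. The only cosmetic difference is that the paper carries out the binomial expansion and inversion at the level of characters in the ring of class functions, whereas you work directly with the module decomposition; these are equivalent formulations of the same argument.
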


\begin{proof} Let $\chi_\VV,  \chi_\bullet, \chi_\WW$ denote the characters of $\VV, \GG^\bullet$, and $\WW$, respectively, so that $\chi_\VV =   \chi_\bullet+ \chi_\WW$. 
Then $\displaystyle{\chi_{\VV^{\otimes k}} = \chi_\VV^k = ( \chi_\bullet + \chi_\WW)^k = \sum_{\ell=0}^k \binom{k}{\ell} \chi_\WW^\ell,}$ and the binomial inverse of this statement is
\begin{align}
\label{eq:chars2}  
\chi_{\WW^{\ot k}} & =  \chi_\WW^k  = \sum_{\ell=0}^k (-1)^{k-\ell} {k \choose \ell}\ \chi_{\VV}^\ell = \sum_{\ell=0}^k (-1)^{k-\ell} {k \choose \ell}\ \chi_{\VV^{\ot \ell}}.  
\end{align} 
By Schur-Weyl duality \eqref{eq:X2G}  we have 
\begin{equation}\label{eq:sw3}
\WW^{\ot k}\  = \ \bigoplus_{\lam \in \mathsf{q}\Lambda_k} \  \qds_{k}^\lam \  \GG^\lam,
\ \quad\hbox{where \  $\qds_{k}^\lam = \dimm \QZ_{k}^\lam$.}
\end{equation}
Computing the character of \eqref{eq:sw3} and equating it with \eqref{eq:chars2} gives 
\begin{equation*}  
\chi_{\WW^{\ot k}} = \chi_\WW^k =  \sum_{\lam \in \mathsf{q}\Lambda_k} \  \qds_{k}^\lam \ \chi_\lambda \ = \
\sum_{\ell=0}^k  (-1)^{k-\ell}  {k \choose \ell} \left (\sum_{\lam \in \Lambda_\ell} \  \ds_{\ell}^\lam \ \chi_\lambda \right),
 \end{equation*}
  where $\ds_{\ell}^\lam = \dimm {\Z_{\ell}^\lam}$,  and $\chi_\lambda$ is the character of $\GG^\lambda$.
 Equating the coefficient of $\chi_\lambda$  (working in the ring of class functions on $\GG$) gives part (a).  Since $\WW$ is isomorphic to its dual as a $\GG$-module, part (b) is the special case of part (a) with $\lambda = \bullet$ (the index of the trivial module):
 $$
\dimm \QZ_{k}(\GG) = \dimm  \QZ_{2k}^{\bullet} = \sum_{\ell = 0}^{2k} (-1)^{2k-\ell}\binom{2k}{\ell} \dimm\Z_\ell^{ \bullet}. \hspace{1.5cm} \qedhere
$$
\end{proof}

 \begin{section}{Irreducible modules for symmetric and alternating groups and \\ their centralizer algebras}\label{sec:Sn}\end{section} The irreducible $\S_n$-modules are labeled by
partitions of $n$, so that  $\Lambda_{\S_n} = \{\lam \mid \lam \vdash n\}$.   When writing  $\lam = [\lam_1, \dots, \lam_n] \vdash n$,
we always assume that the parts of the partition $\lam$  are arranged so that $\lam_1 \geq \lam_2 \ge \ldots \ge \lam_n \ge 0$, and
$|\lam| = n$ (the sum of the parts). 
 We identify a partition with its Young diagram, so for $\lam =[6,4,3,2^2] \vdash 17$,
 we have  

\begin{center}{$\lambda= \begin{array}{c} 
{\begin{tikzpicture}[scale=.35,line width=5pt] 
\tikzstyle{Pedge} = [draw,line width=.7pt,-,black]    
\foreach \i in {1,...,7} 
{\path (\i,1) coordinate (T\i); 
\path (\i,0) coordinate (B\i);
\foreach \i in {1,...,5}  \path (\i,-1) coordinate (S\i);
\foreach \i in {1,...,4}  \path (\i,-2) coordinate (R\i);  
\foreach \i in {1,...,3}  \path (\i,-3) coordinate (Q\i);  
\foreach \i in {1,...,3}  \path (\i,-4) coordinate (P\i);} 
\filldraw[fill= white!10,draw=white!10,line width=8pt]   (T1) -- (T7) -- (B7) -- (B1) -- (T1);
\filldraw[fill= white!10,draw=white!10,line width=8pt]   (B1) -- (B5) -- (S5) -- (S1) -- (B1);
\filldraw[fill= white!10,draw=white!10,line width=8pt]   (S1) -- (S4) -- (R4) -- (R1) -- (S1);
\filldraw[fill= white!10,draw=white!10,line width=8pt]   (R1) -- (R3) -- (Q3) -- (Q1) -- (R1);
\filldraw[fill= white!10,draw=white!10,line width=8pt]   (Q1) -- (Q3) -- (P3) -- (P1) -- (Q1);
\filldraw[fill= gray!60,draw=gray!60,line width=.35pt]   (B2) -- (B3) -- (S3) -- (S2) -- (B2);
\path (T1) edge[Pedge] (P1);
\path(T2)  edge[Pedge] (P2);
\path (T3)  edge[Pedge] (P3);
\path (T4)  edge[Pedge] (R4);
\path (T5)  edge[Pedge] (S5);
\path (T6)  edge[Pedge] (B6);
\path (T7)  edge[Pedge] (B7);
\path(T1) edge[Pedge] (T7);
\path(B1) edge[Pedge] (B7);
\path(S1) edge[Pedge] (S5);
\path(R1) edge[Pedge] (R4); 
\path(Q1) edge[Pedge] (Q3); 
\path(P1) edge[Pedge] (P3);  
\end{tikzpicture}}\end{array}.$}\end{center} 
The  \emph{hook length}  $h(b)$ of a box $b$ in the diagram  is 
 1 plus  the number of boxes below  $b$ in the same column   
plus the number of boxes to the right of $b$ in the same row, and  $h(b) = 1+3+2 = 6$ for the shaded box above.   
The dimension of the irreducible $\S_n$-module $\S_n^\lam$, which we denote $f^\lam$,  can be easily computed by the well-known  hook-length formula   
\begin{equation}\label{eq:hook} f^\lambda = \frac{n!}{\prod_{b \in \lambda} h(b)},\end{equation}
where the denominator is the product of the hook lengths as $b$ ranges over the boxes in the Young diagram of $\lambda$. 
This is  equal to the number of standard Young tableaux of shape $\lam$, where a standard Young tableau $T$ is a filling of the boxes in the Young diagram of $\lam$ with the numbers $\{1, \ldots,n\}$ such that the entries increase in every row from left to right  and in every column from top to bottom.   

The restriction and induction rules for  irreducible symmetric group modules $\S_n^\lambda$ are well known (see for example \cite[Thm.~2.43]{JK}):
\begin{equation}\label{eq:RI}  \mathsf{Res}^{\S_n}_{\S_{n-1}}(\S_n^\lambda)  = \bigoplus_{\mu = \lambda-\square} \S_{n-1}^\mu
\qquad  \mathsf{Ind}^{\S_{n+1}}_{\S_{n}}(\S_n^\lambda)  = \bigoplus_{\kappa = \lambda+\square} \S_{n+1}^\kappa, \end{equation}
where the first sum is over all partitions $\mu$ of $n-1$ obtained from $\lambda$ by removing a box from the end of a row of the diagram of
$\lambda$, and
the second sum is over all partitions $\kappa$ of $n+1$ obtained by adding a box to the end of a row of $\lambda$.  

Assume $\lam =[\lam_1, \dots, \lam_{\ell(\lam)}]$ is a partition of $n$ and $\gamma = [\gamma_1,\dots, \gamma_n]$ is a weak composition of $n$. 
The \emph{Kostka number} $\KK_{\lam,\gamma}$ counts the number of semistandard tableaux $T$ of shape $\lambda$ and type $\gamma$,
where $T$ is a filling of the boxes of the Young diagram of $\lam$ with numbers from $\{1,\dots, n\}$ such that $j$ occurs $\gamma_j$
times, and the entries of $T$ weakly increase across the rows from left to right and strictly increase down the columns.    If $\gamma
= [\gamma_1,\gamma_2,\dots, \gamma_{\ell(\gamma)}]$
is a partition, then $\KK_{\lam,\gamma} = 0$, unless $\lam \geq \gamma$ in the dominance order, which is to say  
that for 
the first part where $\lam$ and $\gamma$
 differ  $\lam_j > \gamma_j$.    Assume  $\perm^\gamma$ is the \emph{permutation module}  obtained from inducing the trivial module for
 the Young subgroup $\S_{\gamma_1} \times \S_{\gamma_2} \times \cdots \times \S_{\gamma_{\ell(\gamma)}}$ to $\S_n$. 
 The irreducible $\S_n$-module $\S_n^\lam$ occurs with multiplicity $\KK_{\lam,\gamma}$
in the decomposition of   $\perm^\gamma$ into irreducible $\S_n$-summands.

For  $\lam  \vdash n$, let $\lam^{\ast
}$ be the conjugate (transpose) 
partition.  Since $\S_n^{\lam^{\ast
}} \cong \S_n^{[1^n]} \ot \S_n^{\lam}$,  where $ \S_n^{[1^n]}$  is the one-dimensional irreducible
$\S_n$-module indexed by the partition of $n$ into $n$ parts of size one, which is the sign representation,  
$\S_n^{\lam^{\ast}} \cong \S_n^\lam$  as $\A_n$-modules.   
Thus, we may assume that  $\lam \geq \lam^{\ast
}$ in the dominance order.   Then by Clifford theory,  it is known that  
\begin{equation}\label{eq:Snrest}  \mathsf{Res}^{\S_n}_{\A_n}(\S_n^\lam) =  \begin{cases} \A_n^{\lam} \cong \mathsf{Res}^{\S_n}_{\A_n}(\S_n^{\lam^{\ast}})&  \quad  \hbox{\rm if} \ \ \lam \neq  \lam^{\ast
}, \\
\A_n^{\lam^+} \oplus  \A_n^{\lam^-} &  \quad  \hbox{\rm if} \ \ \lam= \lam^{\ast}, 
\end{cases} \end{equation} 
where in the first case,  $\A_n^\lam$ is irreducible as an $\A_n$-module;  while  in the second case,  $\S_n^\lam$ decomposes into 
the direct sum of two irreducible $\A_n$-modules,  $\S_n^\lam =\A_n^{\lam^+} \oplus \A_n^{\lam^-}$,  such that
$\dimm \A_n^{\lam^+} = \dimm \A_n^{\lam^-} =  \frac{1}{2} \dimm \S_n^{\lam} =  \frac{1}{2}f^\lam$.   Moreover, 
$$\Lambda_{\A_n}  = \{ \lam \mid \lam  \vdash n, \ \lam>\lam^{\ast} \} \cup \{ \lam^{\pm} \mid \lam \vdash n, \ \lam = \lam^{\ast}  \}.$$  

The restriction rules for alternating groups are the following (see \cite[Thm.\,6.1]{R}, or \cite{Mb} which surveys how to derive these
rules using Mackey's theorem and Clifford theory):  
\begin{align}\begin{split}\label{eq:Aind-res}  
\hspace{-.7cm}  \mathsf{Res}^{\A_n}_{\A_{n-1}}(\A_n^\lam) & =  
 \left(\bigoplus_{{\mu = \lam-\square} \atop {\mu > \mu^\ast}} \A_{n-1}^\mu  \right)  
 \oplus
 \left( \bigoplus_{{\mu = \lam-\square} \atop {\mu = \mu^\ast}} (\A_{n-1}^{\mu^+} \oplus \A_{n-1}^{\mu^-})\right) \ \, \hbox{\rm if \,  $\lam > \lam^\ast$},  \\
  \mathsf{Res}^{\A_n}_{\A_{n-1}}(\A_n^{\lam^{\pm}}) & =   \left(\bigoplus_{{\mu = \lam-\square} \atop {\mu > \mu^\ast
}} \A_{n-1}^\mu\right)
 \oplus
\left( \bigoplus_{{\mu = \lam-\square} \atop {\mu = \mu^\ast}} \A_{n-1}^{\mu^\pm} \right)  \qquad  \hskip.37in \hbox{\rm if \  \ 
 $\lam = \lam^\ast$.} 
  \end{split} \end{align}  
   
Now let $\modu$ be the $n$-dimensional  permutation module for $\S_n$, and set  $\xx = \modu^{\ot k}$ for $k \geq 0$ in applying the results of Section \ref{sec:restrictioninduction},  where $\modu^{\ot 0} =\S_n^{[n]}$ (the trivial $\S_n$-module).   Since $\modu$ will be fixed throughout, it
convenient here to  adopt the  shorthand notation in Table \ref{table1}. For all $k \in \ZZ_{\ge 0}$,   $\Lambda_{k,\S_n}$  (resp. $\Lambda_{k,\A_n}$) 
is the set of indices for the irreducible $\Z_k(n)$-summands   
(resp. $\WZ_k(n)$-summands) in $\modu^{\ot k}$ with multiplicity at least one;  similarly      
 $\Lambda_{k,\S_{n-1}}$  (resp. $\Lambda_{k,\A_{n-1}}$) 
 is the set of indices for the irreducible $\Z_{k+\half}(n)$-summands   
(resp. $\WZ_{k+\half}(n)$-summands) in $\modu^{\ot k}$ with multiplicity at least one.

\begin{table}[h!]
\centering
\begin{tabular}[t]{|c|c|}
\hline
        centralizer algebra  &  irreducible modules \\ \hline \hline  
 $\Z_k(n): = \End_{\S_n} (\modu^{\ot k})\phantom{\bigg\vert}$ \!\! &  $\Z_{k,n}^\lam, \ \  \lam \vdash n$, \ \ $\lam \in \Lambda_{k,\S_n} \subseteq \Lambda_{\S_n}$  \\ \hline
\!\! $\Z_{k+\half}(n): = \End_{\S_{n-1}} (\modu^{\ot k})\phantom{\bigg\vert}$\!\!  &  $\Z_{k+\half,n}^\mu, \ \  \mu \vdash n-1$, \  \ $\mu \in \Lambda_{k,\S_{n-1}} \subseteq \Lambda_{\S_{n-1}}$ \\ \hline 
  $\widehat{\Z}_k(n): = \End_{\A_n} (\modu^{\ot k})\phantom{\bigg\vert}$ \!\! &  $\widehat\Z_{k,n}^\lam, \ \  \lam \vdash n$, \ $\lam > \lam^{\ast
}$,
\  \ $\lam  \in \Lambda_{k,\A_{n}}  \subseteq \Lambda_{\A_n}$\!\!  \\  
  & $\widehat \Z_{k,n}^{\lam^\pm}, \ \  \lam \vdash n$, \ $\lam = \lam^{\ast
}$, \  \ $\lam^\pm \in \Lambda_{k,\A_{n}}  \subseteq \Lambda_{\A_n}\phantom{\bigg\vert}$  \!\!\\  \hline
\!\!  $\widehat{\Z}_{k+\half}(n): = \End_{\A_{n-1}} (\modu^{\ot k})$  \!\! &\!\!  $\widehat\Z_{k+\half,n}^\mu, \ \  \mu \vdash n-1$, \ $\mu> \mu^{\ast}$,
   \  \ $\mu  \in \Lambda_{k+\half,\A_{n-1}}  \subseteq \Lambda_{\A_{n-1}}\phantom{\bigg\vert}$\!\!   \!\!\\  
  &$\widehat \Z_{k+\half,n}^{\mu^\pm}, \ \  \mu \vdash n$, \, \ $\mu = \mu^{\ast}$, \ \ $\mu^\pm  \in \Lambda_{k+\half,\A_{n-1}}
  \subseteq \Lambda_{\A_{n-1}}$  \phantom{\bigg\vert} \!\!\!\!\\  \hline
	\end{tabular}
	\caption{Notation for the centralizer algebras and modules associated with the tensor  product $\mathsf{M}_n^{\ot k}$ of the permutation module $\mathsf{M}_n \cong  \mathsf{S}_n^{[n]}\oplus\mathsf{S}_n^{[n-1,1]}$ of $\S_n$ and its restriction to 
	 $\S_{n-1}$, $\A_n$, and $\A_{n-1}$. \label{table1}}
	\end{table}

Theorem \ref{T:res-ind}(b) together with \eqref{eq:Snrest} imply the following:   \medskip

\begin{prop}\label{P:zdim} Assume $\lam \vdash n$, $\lam \in \Lambda_{k,\A_n}$,  and $\lam \geq \lam^{\ast}$. Then 
\begin{align}\begin{split}\label{eq:branch} \dimm \WZ_{k,n}^{\lam}   &= \dimm \Z_{k,n}^\lam + \dimm\Z_{k,n}^{\lam^{\ast
}}, \quad \quad \ \  \hbox{if \  $\lam > \lam^{\ast}$,} \\ 
\dimm \WZ_{k,n}^{\lambda^+}  &= \dimm \WZ_{k,n}^{\lambda^-}  =\dimm \Z_{k,n}^{\lambda}, \ \ \qquad \hbox{if \ $\lam = \lam^{\ast
}$.}
\end{split} \end{align} 
\end{prop}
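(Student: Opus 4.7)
The plan is to apply Theorem \ref{T:res-ind}(b) directly, with $\xx=\modu^{\ot k}$, $\GG=\S_n$, and $\HH=\A_n$. The statement of that theorem expresses the dimension $\ds_{\xx,\HH}^{\alpha}$ of an irreducible module over the larger centralizer algebra $\Z_\xx(\HH)$ as the sum $\sum_{\mu}\, c_\alpha^\mu\, \ds_{\xx,\GG}^\mu$, where $c_\alpha^\mu$ are precisely the multiplicities in the $\GG\downarrow\HH$ branching rule. Thus the proof reduces to reading off the relevant coefficients $c_\alpha^\mu$ from \eqref{eq:Snrest}.

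First I would record the branching coefficients. From \eqref{eq:Snrest}, if $\mu\vdash n$ satisfies $\mu\neq\mu^\ast$, then $\Res^{\S_n}_{\A_n}(\S_n^\mu)=\A_n^\mu$ where $\A_n^\mu\cong\A_n^{\mu^\ast}$; in our notation (with the convention $\mu\geq\mu^\ast$ labeling the $\A_n$-module), this means that for $\lam>\lam^\ast$ the only $\mu\vdash n$ with $c_{\A_n^\lam}^{\S_n^\mu}\neq 0$ are $\mu=\lam$ and $\mu=\lam^\ast$, each contributing a $1$. If instead $\lam=\lam^\ast$, then $\Res^{\S_n}_{\A_n}(\S_n^\lam)=\A_n^{\lam^+}\oplus\A_n^{\lam^-}$, so the only nonzero coefficient involving $\lam^{\pm}$ is $c_{\A_n^{\lam^\pm}}^{\S_n^\lam}=1$.

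Next I would substitute these coefficients into the formula of Theorem \ref{T:res-ind}(b). In the first case ($\lam>\lam^\ast$), the sum collapses to two terms:
\[
\dimm\WZ_{k,n}^{\lam}\,=\,c_{\A_n^\lam}^{\S_n^{\lam}}\,\dimm\Z_{k,n}^{\lam}\,+\,c_{\A_n^\lam}^{\S_n^{\lam^\ast}}\,\dimm\Z_{k,n}^{\lam^\ast}\,=\,\dimm\Z_{k,n}^{\lam}+\dimm\Z_{k,n}^{\lam^\ast}.
\]
In the second case ($\lam=\lam^\ast$), the sum collapses to a single term for each of $\lam^+$ and $\lam^-$, yielding $\dimm\WZ_{k,n}^{\lam^\pm}=\dimm\Z_{k,n}^{\lam}$.

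There is no genuine obstacle here: once \eqref{eq:Snrest} is in hand and Theorem \ref{T:res-ind}(b) is available, the proposition is an immediate specialization. The only small bookkeeping point is to note that whenever $\lam\in\Lambda_{k,\A_n}$ with $\lam>\lam^\ast$, at least one of $\lam,\lam^\ast$ lies in $\Lambda_{k,\S_n}$ (indeed, $\S_n^\lam$ or $\S_n^{\lam^\ast}$ must appear in $\modu^{\ot k}$ for $\A_n^\lam$ to appear there), with the convention that $\dimm\Z_{k,n}^\mu=0$ when $\mu\notin\Lambda_{k,\S_n}$, so that the displayed formulas are well defined in all cases.
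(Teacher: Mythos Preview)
Your proposal is correct and matches the paper's own argument: the paper simply states that the proposition follows from Theorem~\ref{T:res-ind}(b) together with the branching rule~\eqref{eq:Snrest}, and you have spelled out exactly that specialization. Your added remark about the convention $\dimm \Z_{k,n}^\mu=0$ for $\mu\notin\Lambda_{k,\S_n}$ is a sensible clarification but not strictly needed, since the sum in Theorem~\ref{T:res-ind}(b) already ranges only over $\Lambda_{\xx,\GG}$.
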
 \medskip

\begin{example}   For $\S_4$,  we have $\mathsf{M}_4^{\ot 3} = 5 \S_4^{[4]} \oplus  10 \S_4^{[3,1]}\oplus 5 \S_4^{[2^2]} \oplus 6 \S_4^{[2,1^2]} 
\oplus \S_4^{[1^4]}$,   and  for $\A_4$, \ $\mathsf{M}_4^{\ot 3}  = 6 \A_4^{[4]} \oplus  16 \A_4^{[3,1]}\oplus 5 \A_4^{[2^2]^+}\oplus 5 \A_4^{[2^2]^-}$, as can be seen in row $\ell=3$  of Figures \ref{fig:Sbratteli} and \ref{fig:Abratteli},
where  
\begin{align*} \dimm \WZ_{3,4}^{[4]} &= \dimm \Z_{3,4}^{[4]}   + \dimm \Z_{3,4}^{[1^4]}  = 5+1 = 6, \\
\dimm \WZ_{3,4}^{[3,1]}  &= \dimm \Z_{3,4}^{[3,1]}  + \dimm \Z_{3.4}^{[2,1^2]}  = 10+6 = 16, \\
\dimm \WZ_{3,4}^{[2^2]^{\pm}}  & = \dimm \Z_{3,4}^{[2^2]}= 5. \end{align*}   \end{example}  
  
\section{Bratteli diagrams} \label{sec:Bratteli}

Let $(\GG,\HH)$ be a pair consisting of a finite group $\GG$ and a subgroup $\HH \subseteq \GG$. As in Section \ref{sec:restrictioninduction}, let 
$\{\GG^\lam\}_{\lam \in \Lambda_\GG}$ and $\{\HH^\alpha\}_{\alpha\in \Lambda_\HH}$ be the irreducible modules of $\GG$ and $\HH$ over $\CC$ with 
restriction and induction rules given by
\begin{equation}\label{eq:resind} 
\mathsf{Res}_\HH^\GG(\GG^\lam) = \bigoplus_{\alpha \in \Lambda_\HH} c_\alpha^\lam \  \HH^\al \qquad\hbox{and}\qquad
 \mathsf{Ind}_{\HH}^{\GG}(\HH^\alpha) =  \bigoplus_{\lam \in \Lambda_\GG } c_\alpha^\lam \ \GG^\lam.
 \end{equation}

Let   $\UU^0 =\GG^\bullet$, the trivial $\GG$-module, and assume for $k \in  \ZZ_{\ge 0}$ that  the $\GG$-module $\UU^{k}$ has been defined.    
Let $\UU^{k + \half}$ be the $\HH$-module defined by $\UU^{k+\half} := \Res_{\HH}^{\GG}(\UU^k)$,  and 
then let  $\UU^{k+1}$ be  the $\GG$-module specified by  $\UU^{k+1} := \Ind_{\HH}^{\GG}(\UU^{k+\half})$.  
In this way,  $\UU^\ell$ is defined inductively for all $\ell \in \half \ZZ_{\ge 0}$,   and $\UU^k = \left( \Ind_\HH^\GG  \Res_\HH^\GG\right)^k(\UU^0)$
for all $k \in \ZZ_{\ge 0}$.    The module  $\VV := \mathsf{Ind}_\HH^\GG(\mathsf{Res}_\HH^\GG(\UU^0)) = \UU^1$  is isomorphic to $\GG/\HH$ as a $\GG$-module, where $\GG$ acts on the left cosets
of $\GG/\HH$ by multiplication.

For a $\GG$-module $\xx$ and an $\HH$-module  $\yy$, the ``tensor identity"  says that
 $\Ind^{\GG}_{\HH}(\Res^{\GG}_{\HH}(\xx) \otimes \yy) \cong \xx \otimes \Ind^{\GG}_{\HH}(\yy)$ (see for example \cite[(3.18)]{HR}  for an explicit isomorphism).   Hence, when $\xx= \UU^{k}$ and $\yy = \mathsf{Res}_\HH^\GG(\UU^0)$, this gives 
\begin{equation}\label{eq:tens} \Ind^{\GG}_{\HH}(\Res^{\GG}_{\HH}(\UU^k)) \cong \Ind^{\GG}_{\HH}(\Res^{\GG}_{\HH}(\UU^k)\otimes \mathsf{Res}_\HH^\GG(\UU^0)) \cong \UU^k \otimes \Ind^{\GG}_{\HH}( \mathsf{Res}_\HH^\GG(\UU^0)) = \UU^{k} \otimes \VV.\end{equation}
   By induction, we have the following isomorphisms for all $k \in \ZZ_{\ge 0}$:  
\begin{equation}
 \VV^{\otimes k} \cong \UU^k\quad (\text{as $\GG$-modules)} \qquad\text{and}\qquad  \Res^\GG_\HH(\VV^{\otimes k}) \cong \UU^{k + \frac{1}{2}}\quad (\text{as $\HH$-modules}).
\end{equation}
It follows that there are centralizer algebras isomorphisms:
\begin{align}\begin{split}\label{eq:tensisos}  \Z_k(\GG) &:=\End_\GG(\VV^{\otimes k}) \cong \End_\GG(\UU^k), \\ 
\Z_{k+\frac{1}{2}}(\HH)& :=\End_\HH(\Res^\GG_\HH(\VV^{\otimes k})) \cong \End_\HH(\UU^{k+\frac{1}{2}}).
\end{split}\end{align}    
   
Suppose for $k \in \ZZ_{\ge 0}$ that 
\begin{itemize}
\item $\Lambda_{k,\GG} \subseteq \Lambda_\GG$ indexes the irreducible $\GG$-modules, and hence also the irreducible $\Z_k(\GG)$-modules,  in $\UU^{k} \cong \VV^{\otimes k}$;

\item  $\Lambda_{k+\frac{1}{2},\HH}\subseteq \Lambda_\HH$  indexes the irreducible $\HH$-modules, and hence also the irreducible $\Z_{k+\frac{1}{2}}(\HH)$-modues,  in $\UU^{k + \frac{1}{2}}\cong\Res^\GG_\HH(\VV^{\otimes k})$.
\end{itemize}
The \emph{restriction-induction Bratteli diagram} for the pair $(\GG,\HH)$  is an infinite, rooted tree $\mathcal{B}(\GG,\HH)$ whose vertices are organized into rows labeled by half integers $\ell$  in $\half\ZZ_{\ge 0}$.  For $\ell =k \in \ZZ_{\ge 0}$, the vertices on row
$k$ are the elements of $\Lambda_{k,\GG}$, and the vertices on row $\ell = k+\half$ are the elements of $\Lambda_{k+\half,\HH}$. 
The vertex on row $0$ is the root, the label of the trivial $\GG$-module, and the vertex on row $\half$ is  the label
of the trivial $\HH$-module.  For the pair $(\S_n,\S_{n-1})$ (or $(\A_n,\A_{n-1})$),  the labels on rows $0$ and $\frac{1}{2}$ are the partitions $[n], [n-1]$ having just one part.

 The edges of $\mathcal{B}(\GG,\HH)$ are given by drawing $c^\lambda_\alpha$ edges from $\lambda \in \Lambda_{k,\GG}$ to $\alpha \in \Lambda_{k+\frac{1}{2},\HH}$, where  $c^\lambda_\alpha$ is as in \eqref{eq:resind}. 
Similarly, there are  $c_\beta^\kappa$  edges 
 from $\beta \in \Lambda_{k+\frac{1}{2},\HH}$ to $\kappa \in \Lambda_{k+1,\GG}$.  The Bratteli diagram is constructed in such a way that 
 \begin{itemize}
 
\item the number of paths from the root at level 0 to $\lambda \in\Lambda_{k,\GG}$ equals the multipicity of $\GG^\lam$ in $\UU^{k}\cong\VV^{\otimes k}$ and thus also equals the dimension of the irreducible $\Z_k(\GG)$-module  $\Z_{\UU^k,\GG}^\lam$ (these numbers are computed in Pascal-triangle-like fashion and are placed below each vertex);

\item the number of paths  from the root at level 0 to $\alpha \in\Lambda_{k+\frac{1}{2},\HH}$ equals the multipicity of $\HH^\alpha$ in $\UU^{k + \frac{1}{2}}$ and thus also equals the dimension of
the $\Z_{k+\half}(\HH)$-module  $\Z_{\UU^{k+\half},\HH}^\alpha$ (and is indicated beneath each vertex);

\item the sum of the squares of the labels on row $k$ (resp. row $k+\half$)  equals $\dimm \Z_k(\GG)$ (resp. $\dimm \Z_{k+\half}(\HH)$).
\end{itemize}

When  $(\GG,\HH) = (\S_n,\S_{n-1})$  or  when $(\GG,\HH) = (\A_n, \A_{n-1})$, it is well known (and easy to verify) that the permutation module satisfies  $\modu  \cong \UU^1 = \Ind^{\S_n}_{\S_{n-1}}(\Res^{\S_n}_{\S_{n-1}}(\S_n^{[n]}))$.   Then \eqref{eq:tensisos} implies
there are  partition algebra surjections
as $\P_k(n)\to \Z_k(n) =  \End_{\S_n}(\modu^{\otimes k})\cong\End_{\S_n}(\UU^k)$ and $\P_{k+\frac{1}{2}}(n)\to \Z_{k+\frac{1}{2}}(n) =  \End_{\S_{n-1}}(\modu^{\otimes k})\cong\End_{\S_{n-1}}(\UU^k)$.    Using the restriction/induction rules for $\S_n$ in  \eqref{eq:RI} 
and for $\A_n$ in  \eqref{eq:Aind-res},  we construct  the Bratteli diagram for $(\S_4,\S_3)$  (see  Figure \ref{fig:Sbratteli}) and
for $(\A_4,\A_3)$  (see  Figure \ref{fig:Abratteli}).  In  Appendices \ref{sec:BratteliS6S5} and \ref{sec:BratteliA6A5},  we construct the Bratteli diagrams for $(\S_6,\S_5)$ and $(\A_6,\A_5)$. 

\Yboxdim{8pt}  
\Ylinethick{.6pt}  
\begin{figure}[h!]
$$
\begin{tikzpicture}[line width=.5pt,xscale=0.15,yscale=0.25]  
\path (-10,0)  node[anchor=west]  {$\bsym{\ell=0}$};
\path (0,0)  node[anchor=west] (S4-0) {\yng(4)};
\draw (S4-0) node[below=4pt, black] {\nmm 1};
\path (-10,-5)  node[anchor=west]  {$\bsym{\ell=\frac{1}{2}}$};
\path (10,-5)  node[anchor=west] (S3-1) {\yng(3)};
\draw (S3-1) node[below=4pt,black] {\nmm 1};
\path (-10,-10)  node[anchor=west]  {$\bsym{\ell=1}$};
\path (0,-10)  node[anchor=west] (S4-2) {\yng(4)};
\path (19,-10)  node[anchor=west] (S31-2) {\yng(3,1)};
\draw (S4-2) node[below=4pt,black] {\nmm 1};
\draw (S31-2) node[below=6pt, right=-2pt,black] {\nmm 1};
\path (-10,-15)  node[anchor=west]  {$\bsym{\ell=\frac{3}{2}}$};
\path (10,-15)  node[anchor=west] (S3-3) {\yng(3)};
\path (29,-15)  node[anchor=west] (S21-3) {\yng(2,1)};
\draw (S3-3) node[below=4pt, black] {\nmm 2};
\draw (S21-3) node[below=6pt,right=2pt,black] {\nmm 1};
\path (-10,-20)  node[anchor=west]  {$\bsym{\ell=2}$};
\path (0,-20)  node[anchor=west] (S4-4) {\yng(4)};
\path (19,-20)  node[anchor=west] (S31-4) {\yng(3,1)};
\path (29,-20)  node[anchor=west] (S22-4) {\yng(2,2)};
\path (38,-20)  node[anchor=west] (S211-4) {\yng(2,1,1)};
\draw (S4-4) node[below=4pt, black] {\nmm 2};
\draw (S31-4) node[below=6pt, right=-2pt,black] {\nmm 3};
\draw (S22-4) node[below=4pt, right=8pt, black] {\nmm 1};
\draw (S211-4) node[below=4pt, right=2pt, black] {\nmm 1};
\path (-10,-25)  node[anchor=west]  {$\bsym{\ell=\frac{5}{2}}$};
\path (10,-25)  node[anchor=west] (S3-5) {\yng(3)};
\path (29,-25)  node[anchor=west] (S21-5) {\yng(2,1)};
\path (48,-25)  node[anchor=west] (S111-5) {\yng(1,1,1)};
\draw (S3-5) node[below=4pt,black] {\nmm 5};
\draw (S21-5) node[below=6pt,right=2pt,black] {\nmm 5};
\draw (S111-5) node[below=4pt, right=6pt,black] {\nmm 1};
\path (-10,-30)  node[anchor=west]  {$\bsym{\ell=3}$};
\path (0,-30)  node[anchor=west] (S4-6) {\yng(4)};
\path (19,-30)  node[anchor=west] (S31-6) {\yng(3,1)};
\path (29,-30)  node[anchor=west] (S22-6) {\yng(2,2)};
\path (38,-30)  node[anchor=west] (S211-6) {\yng(2,1,1)};
\path (57,-30)  node[anchor=west] (S1111-6) {\yng(1,1,1,1)};
\draw (S4-6) node[below=4pt,black] {\nmm 5};
\draw (S31-6) node[below=6pt, right=-2pt,black] {\nmm 10};
\draw (S22-6) node[below=4pt, right=8pt, black] {\nmm 5};
\draw (S211-6) node[below=4pt, right=2pt, black]  {\nmm 6};
\draw (S1111-6) node[below=4pt, right=6pt,black] {\nmm 1};
\path (-10,-35)  node[anchor=west]  {$\bsym{\ell=\frac{7}{2}}$};
\path (10,-35)  node[anchor=west] (S3-7) {\yng(3)};
\path (29,-35)  node[anchor=west] (S21-7) {\yng(2,1)};
\path (48,-35)  node[anchor=west] (S111-7) {\yng(1,1,1)};
\draw (S3-7) node[below=4pt,black] {\nmm 15};
\draw (S21-7) node[below=6pt,right=2pt,black] {\nmm 21};
\draw (S111-7) node[below=4pt, right=6pt,black] {\nmm 7};
\path (-10,-40)  node[anchor=west]  {$\bsym{\ell=4}$};
\path (0,-40)  node[anchor=west] (S4-8) {\yng(4)};
\path (19,-40)  node[anchor=west] (S31-8) {\yng(3,1)};
\path (29,-40)  node[anchor=west] (S22-8) {\yng(2,2)};
\path (38,-40)  node[anchor=west] (S211-8) {\yng(2,1,1)};
\path (57,-40)  node[anchor=west] (S1111-8) {\yng(1,1,1,1)};
\draw (S4-8) node[below=4pt,black] {\nmm 15};
\draw (S31-8) node[below=6pt, right=-2pt,black] {\nmm 36};
\draw (S22-8) node[below=4pt, right=8pt, black] {\nmm 21};
\draw (S211-8) node[below=4pt, right=2pt, black] {\nmm 28};
\draw (S1111-8) node[below=4pt, right=6pt,black] {\nmm 7};
\draw (75,0) node[anchor=east,black] {$~~{\mathbf{1}}$};
\draw (75,-5) node[anchor=east,black] {$~~{\mathbf{1}}$};
\draw (75,-10) node[anchor=east,black] {$~~{\mathbf{2}}$};
\draw (75,-15) node[anchor=east,black] {$~~{\mathbf{5}}$};
\draw (75,-20) node[anchor=east,black] {$~~{\mathbf{15}}$};
\draw (75,-25) node[anchor=east,black] {$~~{\mathbf{51}}$};
\draw (75,-30) node[anchor=east,black] {$~~{\mathbf{187}}$};
\draw (75,-35) node[anchor=east,black] {$~~{\mathbf{715}}$};
\draw (75,-40) node[anchor=east,black] {$~~{\mathbf{2795}}$};
\path  (S4-0) edge[black,thick] (S3-1);
\path  (S4-2) edge[black,thick] (S3-1);
\path  (S31-2) edge[black,thick] (S3-1);
\path  (S4-2) edge[black,thick] (S3-3);
\path  (S31-2) edge[black,thick] (S3-3);
\path  (S31-2) edge[black,thick] (S21-3);
\path  (S4-4) edge[black,thick] (S3-3);
\path  (S31-4) edge[black,thick] (S3-3);
\path  (S31-4) edge[black,thick] (S21-3);
\path  (S22-4) edge[black,thick] (S21-3);
\path  (S211-4) edge[black,thick] (S21-3);
\path  (S4-4) edge[black,thick] (S3-5);
\path  (S31-4) edge[black,thick] (S3-5);
\path  (S31-4) edge[black,thick] (S21-5);
\path  (S22-4) edge[black,thick] (S21-5);
\path  (S211-4) edge[black,thick] (S21-5);
\path  (S211-4) edge[black,thick] (S111-5);
\path  (S4-6) edge[black,thick] (S3-5);
\path  (S31-6) edge[black,thick] (S3-5);
\path  (S31-6) edge[black,thick] (S21-5);
\path  (S22-6) edge[black,thick] (S21-5);
\path  (S211-6) edge[black,thick] (S21-5);
\path  (S211-6) edge[black,thick] (S111-5);
\path  (S1111-6) edge[black,thick] (S111-5);
\path  (S4-6) edge[black,thick] (S3-7);
\path  (S31-6) edge[black,thick] (S3-7);
\path  (S31-6) edge[black,thick] (S21-7);
\path  (S22-6) edge[black,thick] (S21-7);
\path  (S211-6) edge[black,thick] (S21-7);
\path  (S211-6) edge[black,thick] (S111-7);
\path  (S1111-6) edge[black,thick] (S111-7);
\path  (S4-8) edge[black,thick] (S3-7);
\path  (S31-8) edge[black,thick] (S3-7);
\path  (S31-8) edge[black,thick] (S21-7);
\path  (S22-8) edge[black,thick] (S21-7);
\path  (S211-8) edge[black,thick] (S21-7);
\path  (S211-8) edge[black,thick] (S111-7);
\path  (S211-8) edge[black,thick] (S111-7);
\path  (S1111-8) edge[black,thick] (S111-7);
\end{tikzpicture}
$$
\caption{Levels  \ $\ell = 0,\half,1,\ldots,\frac{7}{2},4$ \  of the Bratteli diagram for the pair $(\S_4, \S_3)$.\label{fig:Sbratteli}}
\end{figure}

\Yboxdim{8pt}  
\Ylinethick{.6pt}  
\begin{figure}[h!]
$$
\begin{tikzpicture}[line width=.5pt,xscale=0.18,yscale=0.25]
\path (-10,0)  node[anchor=west]  {$\bsym{\ell=0}$};
\path (0,0)  node[anchor=west] (S4-0) {\yng(4)};
\draw (S4-0) node[below=12pt,right=0pt,black] {\nmm 1};
\path (-10,-5)  node[anchor=west]  {$\bsym{\ell=\frac{1}{2}}$};
\path (.8,-5)  node[anchor=west] (S3-1) {\yng(3)};
\draw (S3-1) node[below=12pt,right=0pt,black] {\nmm 1};
\path (-10,-10)  node[anchor=west]  {$\bsym{\ell=1}$};
\path (0,-10)  node[anchor=west] (S4-2) {\yng(4)};
\path (12,-10)  node[anchor=west] (S31-2) {\yng(3,1)};
\draw (S4-2) node[below=12pt,right=0pt,black] {\nmm 1};
\draw (S31-2) node[below=6pt, right=-2pt,black] {\nmm 1};
\path (-10,-15)  node[anchor=west]  {$\bsym{\ell=\frac{3}{2}}$};
\path (.8,-15)  node[anchor=west] (S3-3) {\yng(3)};
\path (12.25,-15)  node[anchor=west] (S21p-3) {$\yng(2,1)^{\bsym{+}}$};
\path (23,-15)  node[anchor=west] (S21m-3) {$\yng(2,1)^{\bsym{-}}$};
\draw (S3-3) node[below=12pt,right=0pt,black] {\nmm 2};
\draw (S21p-3) node[below=6pt,right=2pt,black] {\nmm 1};
\draw (S21m-3) node[below=6pt,right=2pt,black] {\nmm 1};
\path (-10,-20)  node[anchor=west]  {$\bsym{\ell=2}$};
\path (0,-20)  node[anchor=west] (S4-4) {\yng(4)};
\path (12,-20)  node[anchor=west] (S31-4) {\yng(3,1)};
\path (23,-20)  node[anchor=west] (S22p-4) {$\yng(2,2)^{\bsym{+}}$};
\path (33,-20)  node[anchor=west] (S22m-4) {$\yng(2,2)^{\bsym{-}}$};
\draw (S4-4) node[below=12pt,right=0pt,black]{\nmm 2};
\draw (S31-4) node[below=6pt, right=-2pt,black] {\nmm 4};
\draw (S22p-4) node[below=4pt, right=8pt, black] {\nmm 1};
\draw (S22m-4) node[below=4pt, right=8pt, black] {\nmm 1};
\path (-10,-25)  node[anchor=west]  {$\bsym{\ell=\frac{5}{2}}$};
\path (.8,-25)  node[anchor=west] (S3-5) {\yng(3)};
\path (12.25,-25)  node[anchor=west] (S21p-5) {$\yng(2,1)^{\bsym{+}}$};
\path (23,-25)  node[anchor=west] (S21m-5) {$\yng(2,1)^{\bsym{-}}$};
\draw (S3-5) node[below=12pt,right=0pt,black] {\nmm 6};
\draw (S21p-5) node[below=6pt,right=2pt,black] {\nmm 5};
\draw (S21m-5) node[below=6pt,right=2pt,black] {\nmm 5};
\path (-10,-30)  node[anchor=west]  {$\bsym{\ell=3}$};
\path (0,-30)  node[anchor=west] (S4-6) {\yng(4)};
\path (12,-30)  node[anchor=west] (S31-6) {\yng(3,1)};
\path (23,-30)  node[anchor=west] (S22p-6) {$\yng(2,2)^{\bsym{+}}$};
\path (33,-30)  node[anchor=west] (S22m-6) {$\yng(2,2)^{\bsym{-}}$};
\draw (S4-6) node[below=12pt,right=0pt,black] {\nmm 6};
\draw (S31-6) node[below=6pt, right=-2pt,black] {\nmm 16};
\draw (S22p-6) node[below=4pt, right=8pt, black] {\nmm 5};
\draw (S22m-6) node[below=4pt, right=8pt, black] {\nmm 5};
\path (-10,-35)  node[anchor=west]  {$\bsym{\ell=\frac{7}{2}}$};
\path (.8,-35)  node[anchor=west] (S3-7) {\yng(3)};
\path (12.25,-35)  node[anchor=west] (S21p-7) {$\yng(2,1)^{\bsym{+}}$};
\path (25,-35)  node[anchor=west] (S21m-7) {$\yng(2,1)^{\bsym{-}}$};
\draw (S3-7) node[below=6pt,right=10pt,black] {\nmm 22};
\draw (S21p-7) node[below=6pt,right=2pt,black] {\nmm 21};
\draw (S21m-7) node[below=6pt,right=2pt,black] {\nmm 21};
\draw (50,0) node[anchor=east,black] {$~~{\mathbf{1}}$};
\draw (50,-5) node[anchor=east,black] {$~~{\mathbf{1}}$};
\draw (50,-10) node[anchor=east,black] {$~~{\mathbf{2}}$};
\draw (50,-15) node[anchor=east,black] {$~~{\mathbf{6}}$};
\draw (50,-20) node[anchor=east,black] {$~~{\mathbf{22}}$};
\draw (50,-25) node[anchor=east,black] {$~~{\mathbf{86}}$};
\draw (50,-30) node[anchor=east,black] {$~~{\mathbf{342}}$};
\draw (50,-35) node[anchor=east,black] {$~~{\mathbf{1366}}$};
\path  (S4-0) edge[black,thick] (S3-1);
\path  (S4-2) edge[black,thick] (S3-1);
\path  (S31-2) edge[black,thick] (S3-1);
\path  (S4-2) edge[black,thick] (S3-3);
\path  (S31-2) edge[black,thick] (S3-3);
\path  (S31-2) edge[black,thick] (S21p-3);
\path  (S31-2) edge[black,thick] (S21m-3);
\path  (S4-4) edge[black,thick] (S3-3);
\path  (S31-4) edge[black,thick] (S3-3);
\path  (S31-4) edge[black,thick] (S21m-3);
\path  (S31-4) edge[black,thick] (S21p-3);
\path  (S22p-4) edge[black,thick] (S21p-3);
\path  (S22m-4) edge[black,thick] (S21m-3);
\path  (S4-4) edge[black,thick] (S3-5);
\path  (S31-4) edge[black,thick] (S3-5);
\path  (S31-4) edge[black,thick] (S21p-5);
\path  (S31-4) edge[black,thick] (S21m-5);
\path  (S22p-4) edge[black,thick] (S21p-5);
\path  (S22m-4) edge[black,thick] (S21m-5);
\path  (S4-6) edge[black,thick] (S3-5);
\path  (S31-6) edge[black,thick] (S3-5);
\path  (S31-6) edge[black,thick] (S21p-5);
\path  (S31-6) edge[black,thick] (S21m-5);
\path  (S22p-6) edge[black,thick] (S21p-5);
\path  (S22m-6) edge[black,thick] (S21m-5);
\path  (S31-6) edge[black,thick] (S21p-7);
\path  (S31-6) edge[black,thick] (S21m-7);
\path  (S4-6) edge[black,thick] (S3-7);
\path  (S31-6) edge[black,thick] (S3-7);
\path  (S22p-6) edge[black,thick] (S21p-7);
\path  (S22m-6) edge[black,thick] (S21m-7);
\end{tikzpicture}
$$
\caption{Levels  \ $\ell = 0,\half,1,\ldots,3, \frac{7}{2}$ \  of the Bratteli diagram for the pair $(\A_4, \A_3)$.\label{fig:Abratteli}}
\end{figure} 

\begin{remark} Amazingly, the Bratteli diagrams in Figures \ref{fig:Sbratteli} and \ref{fig:Abratteli} also appear in the Schur-Weyl duality analysis of the McKay correspondence, as discussed in \cite{B} and \cite{BH3}.  The binary octahedral subgroup $\mathbf{O}$ of the special unitary group $\mathsf{SU}_2$  is the two-fold cover of the octahedral group, 
which is isomorphic to the symmetric group $\S_4$.   We use that fact to show that the Bratteli diagram for  tensor powers of the 2-dimensional spin module  of $\mathbf{O}$  (which is the defining module of $\mathbf{O}$ and $\mathsf{SU}_2$)
 is identical to Figure \ref{fig:Sbratteli}.   Similarly, the binary tetrahedral subgroup  $\mathbf{T}$ is the two-fold cover of tetrahedral group, which is isomorphic to the alternating group $\A_4$.  The Bratteli diagram for tensor powers of the 2-dimensional defining module
 of $\mathbf{T}$  is identical to Figure \ref{fig:Abratteli}. 
\end{remark}

\Yboxdim{5pt}  
\Ylinethick{.6pt} 

\begin{remark} The  \emph{tensor power Bratteli diagram} $\mathcal{B}_{\VV}(\GG)$ is constructed using the centralizer algebras $\Z_k(\GG) = \End_\GG(\VV^{\otimes k})$. The vertices on level $k$ of  $\mathcal{B}_{\VV}(\GG)$ are labeled by elements of $\Lambda_{k, \GG}$, and there are $c^\lambda_\mu$ edges from $\lambda \in \Lambda_{k,\GG}$ to $\mu \in \Lambda_{k+1,\GG}$ if $\GG^\lambda \otimes \VV \cong \bigoplus_{\mu \in \Lambda_\GG} c^\lambda_\mu \GG^\mu$.  In the special case that $\VV  = \Ind^\GG_\HH(\Res^\GG_\HH(\UU^0))$ and $\UU^0$ is the trivial $\GG$-module, $\mathcal{B}_\VV(\GG)$ is identical to  $\mathcal{B}(\GG,\HH)$ except that the half integer levels are missing from $\mathcal{B}_\VV(\GG)$. So for example,  in the tensor power Bratteli diagram that corresponds to Figure \ref{fig:Sbratteli},  there are two edges from the vertex $\yng(3,1)$ on level $1$ to the vertex $\yng(3,1)$ on level $2$. 
Including the intermediate half-integer levels,  which corresponds to performing restriction and then induction,  results in a 
diagram without multiple edges between vertices when $(\GG,\HH) = (\S_n,\S_{n-1})$ or $(\A_n,\A_{n-1})$,  since the restriction/induction rules for those pairs are multiplicity free.   The half-integer centralizer algebras  have proven to be a powerful tool in studying the structure of these 
tensor power centralizer algebras (for example, in \cite{HR} and \cite{BH3}), and we use them here to recursively derive dimension formulas. 
\end{remark}

\begin{section}{Dimensions formulas for symmetric group centralizer algebras}\end{section}

In the next two sections, we determine expressions for the dimensions of the irreducible modules 
for the
centralizer algebras in Table \ref{table1}.   Our arguments will invoke 
standard combinatorial facts  about representations of  the symmetric group $\S_n$.   The dimensions will be expressed as integer combinations of
Stirling numbers of the second kind.  We begin by briefly reviewing some known results about these numbers.  \medskip

\subsection{Stirling numbers of the second kind and Bell numbers}

There are several commonly used notations for Stirling numbers of the second kind;  for example, $S(k,t)$ is used by Stanley
 \cite{S1}.   In \cite{K}, Knuth remarks ``The lack of a widely accepted way to refer to these numbers has become almost scandalous,''  and he goes on to make a convincing argument for adopting the notation $\lb{k \atop t}\rb$, which we
will do here.  

The  \emph{Stirling number $\lb k \atop t \rb$   of the second  kind} counts the number  of ways to
partition a set of $k$ elements into $t$ disjoint nonempty blocks.    In particular, 
$\lb{k\atop 0}\rb = 0$ for all $k \geq 1$,  and $\lb{k \atop t} \rb = 0$ if $t > k$.    By convention,
$\lb{0 \atop 0}\rb = 1$.  These numbers satisfy the recurrence relations,  
\begin{eqnarray}
\lb{k+1 \atop t+1}\rb &=& \sum_{r=t}^k {k \choose r} \lb{r \atop t}\rb \label{eq:Stir1} \\
\lb{k+1 \atop t}\rb &=&t\lb{k \atop t}\rb + \lb{k \atop t-1}\rb.\label{eq:Stir2}\end{eqnarray}
For $k \geq 1$, 
\begin{equation} \label{eq:Bell1}
\sum_{t=0}^k \lb{k \atop t}\rb  =  \sum_{t=1}^k \lb{k \atop t}\rb  = \mathsf{B}(k),\end{equation} 
where $\mathsf{B}(k)$ is the $k$th {\it Bell number}.
More generally, for $k \geq 1$, 
\begin{equation}
 \label{eq:Bell2}
  \sum_{t=1}^n \lb{k \atop t}\rb \  =: \ \mathsf{B}(k,n) \end{equation} 
counts the number of ways to partition a set of $k$ elements into at most $n$ disjoint nonempty blocks, 
and $\mathsf{B}(k,n) = \mathsf{B}(k)$ if $n \ge k$.
Identifying  $\P_0(\para)$ with $\CC$, we have $\dimm \P_k(\para) = \mathsf{B}(2k)$  for all $k \in \ZZ_{\ge 0}$. 
In fact,   $\dimm \P_\ell(\para) = \mathsf{B}(2\ell)$ for all $\ell \in \frac{1}{2} \ZZ_{\ge 0}$, which can be seen
by taking $\nu = \emptyset$ in Corollary \ref{C:partdims} below. 
 
\begin{subsection}{Main result for symmetric group centralizer algebras}\end{subsection} 

Our aim in this section is to establish Theorem \ref{T:cent}, which gives the dimensions of
 the irreducible modules for the centralizer algebras $\Z_k(n)= \End_{\S_n}(\modu^{\ot k})$ and $\Z_{k+\half}(n)= \End_{\S_{n-1}}(\modu^{\ot k})$.  Throughout,  the notation  $\lam^\#= [\lam_2, \dots, \lam_{\ell(\lam)}]$ will designate a
 partition $\lam=[\lam_1,\lam_2, \dots, \lam_{\ell(\lam)}]$ with its largest part $\lam_1$ removed, and  $f^\lam$ will be 
 the number of standard tableaux of shape $\lam$, which is also the dimension of the irreducible  symmetric group module labeled by $\lam$.   If $\pi$
 is a partition contained in $\lambda$,  then $f^{\lambda/\pi}$ denotes the number of standard tableaux with skew shape 
 $\lambda \setminus \pi$. The Kostka number $\mathsf{K}_{\lambda,\gamma}$ counts the number of semistandard tableaux of shape $\lambda$ and type $\gamma$ (see Section \ref{sec:Sn}).
 
 \bigskip

\begin{thm} \label{T:cent} Let $k,n \in \ZZ_{\ge 0}$ and $n \geq 1$,  and  let  the notation be as in Table \ref{table1}.

  \begin{itemize}
\item[{\rm (a)}]   Assume   $\lambda = [\lam_1, \dots, \lam_n] \vdash n$, and  $\lam \in \Lambda_{k,\S_n}$.     Then 
\begin{align*}  \dimm \Z_{k,n}^\lam  \  &= \  \sum_{t=0}^{n} \lb{k \atop t}\rb \, \KK_{\lam, [n-t, 1^t]} \ = \ \sum_{t=|\lam^\#|}^n\lb{k \atop t}\rb \, f^{\lam/[n-t]} \\
&= \  f^{\lam^\#} \, \sum_{t = {|\lam^\#|}}^{n-\lam_2} \lb{k \atop t}\rb \, {t \choose {|\lam^\#|}} 
 \ + \ \sum_{t=n-\lam_2+1}^n   \lb{k \atop t}\rb \, f^{\lam/[n-t]}.
 \end{align*}
\item[{\rm (b)}]  Assume  $\mu = [\mu_1, \ldots, \mu_{n-1}] \vdash n-1$, and  $\mu \in \Lambda_{k+\half, \S_{n-1}}$.    Then
\begin{align*} \dimm \Z_{k+\half,n}^\mu \ &= \  \sum_{t=0}^{n-1} \lb{k+1 \atop t+1}\rb \, \KK_{\mu, [n-1-t,1^t]} 
= \sum_{t=|\mu^\#|}^{n-1}\lb{k+1 \atop t+1}\rb \, f^{\mu/[n-1-t]} \\
&= \  f^{\mu^\#} \, \sum_{t = {|\mu^\#|}}^{n-1-\mu_2} \lb{k+1 \atop t+t}\rb \, {t \choose {|\mu^\#|}} 
 \ + \ \sum_{t=n-\mu_2}^n   \lb{k+1 \atop t+1}\rb \, f^{\mu/[n-1-t]}.
 \end{align*}
 
\item[{\rm (c)}]  $\dimm \Z_k(n) \ =\  \dimm \Z_{2k,n}^{[n]} = \displaystyle{ \sum_{t =0}^{n} \lb{2k \atop t}\rb}  \ = \mathsf{B}(2k,n)  \qquad  (\ =\ \mathsf{B}(2k) \ \  \hbox{\rm if} \  \ n \geq 2k\ ).$ 
\item[{\rm (d)}] $ \dimm \Z_{k+\half}(n) \ = \ \dimm \Z_{2k+\half,n}^{[n-1]} 
 = \displaystyle{ \sum_{t =0}^{n-3} \lb{2k +1\atop t+1}\rb + \left(\lb{2k+1 \atop  n-1}\rb + \lb{2k+1 \atop  n}\rb\right)}$\ 
 
$\qquad \qquad \qquad =  \ \displaystyle{\sum_{t =1}^{n} \lb{2k +1\atop t}\rb \ = \mathsf{B}(2k+1,n) 
  \qquad  (\ =\ \mathsf{B}(2k+1) \ \  \hbox{\rm if} \  \ n \geq 2k+1\ ). }$ 
 \end{itemize}
\end{thm}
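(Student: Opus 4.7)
The plan is to prove (a) first by decomposing $\modu^{\ot k}$ as an $\S_n$-module directly. Fix the standard basis $\{v_1,\ldots,v_n\}$ of $\modu$ permuted by $\S_n$. Each basis tensor $v_{i_1}\ot\cdots\ot v_{i_k}$ determines a set partition $P$ of $\{1,\ldots,k\}$ by the rule $a\sim b$ iff $i_a=i_b$; once $P$ with $t$ blocks is fixed, the remaining data is an injection from the blocks of $P$ into $\{1,\ldots,n\}$, which yields a permutation $\S_n$-module isomorphic to $\Ind_{\S_{n-t}\times\S_1\times\cdots\times\S_1}^{\S_n}(\triv)$ with $t$ factors of $\S_1$. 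Summing over the $\lb k \atop t \rb$ set partitions with $t$ blocks and applying the classical Young rule $\Ind_{\S_{n-t}\times\S_1\times\cdots\times\S_1}^{\S_n}(\triv)=\bigoplus_\lam \KK_{\lam,[n-t,1^t]}\,\S_n^\lam$, Schur-Weyl duality then delivers the first equality. For the second equality, in any semistandard tableau of shape $\lam$ with content $[n-t,1^t]$ the $n-t$ copies of $1$ must fill the first $n-t$ cells of row $1$, so the remaining entries form a standard skew tableau of shape $\lam/[n-t]$; hence $\KK_{\lam,[n-t,1^t]}=f^{\lam/[n-t]}$, which forces $t\ge n-\lam_1=|\lam^\#|$. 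The third equality follows by splitting the sum at $t=n-\lam_2$: when $t\le n-\lam_2$ the skew shape $\lam/[n-t]$ is the column-disjoint union of $\lam^\#$ (rows $2$ onward) and a single row of length $t-|\lam^\#|$, giving $f^{\lam/[n-t]}=\binom{t}{|\lam^\#|}f^{\lam^\#}$; otherwise we leave the term as $f^{\lam/[n-t]}$.

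For part (b), the key observation is the $\S_{n-1}$-module decomposition $\modu\cong \triv\oplus \modu'$, where $\modu'$ is the $(n-1)$-dimensional permutation module of $\S_{n-1}$. Expanding $(\triv\oplus \modu')^{\ot k}$ and grouping by the number of $\modu'$-factors yields $\modu^{\ot k}\cong \bigoplus_{j=0}^k\binom{k}{j}(\modu')^{\ot j}$ as $\S_{n-1}$-modules. Applying part (a) with $n$ replaced by $n-1$, swapping the order of summation, and invoking the Stirling identity \eqref{eq:Stir1} in the form $\sum_{j=t}^k\binom{k}{j}\lb j \atop t \rb=\lb k+1 \atop t+1 \rb$ produces the first equality of (b). The remaining two equalities follow from the same skew-tableau arguments as in (a), applied to $\mu\vdash n-1$.

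Parts (c) and (d) then follow from (a) and (b) combined with self-duality of $\modu$ (both as an $\S_n$- and as an $\S_{n-1}$-module, since these groups have only real characters). Self-duality gives $\End(\modu^{\ot k})\cong \modu^{\ot 2k}$, so $\dimm \Z_k(n)=\dimm(\modu^{\ot 2k})^{\S_n}$ equals the multiplicity of the trivial module $\S_n^{[n]}$ in $\modu^{\ot 2k}$, namely $\dimm \Z_{2k,n}^{[n]}$. Applying part (a) with $\lam=[n]$ and noting $\KK_{[n],[n-t,1^t]}=1$ for all $0\le t\le n$ yields the sum formula in (c); this reduces to $\mathsf{B}(2k)$ when $n\ge 2k$, since $\lb 2k \atop t \rb=0$ for $t>2k$. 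Part (d) is parallel, using the trivial $\S_{n-1}$-module $\S_{n-1}^{[n-1]}$ together with part (b).

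The chief technical obstacle I anticipate is careful bookkeeping of the index ranges in the set-partition decomposition for part (a) (in particular, ensuring injections are correctly counted when $t>n$, where the sum automatically vanishes since $\Ind_{\S_{n-t}}^{\S_n}(\triv)=0$ for such $t$) and justifying the column-disjointness used in the third equality. Once part (a) is secured, parts (b)--(d) follow by structured reductions using the tools already assembled.
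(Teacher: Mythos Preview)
Your proposal is correct and follows essentially the same approach as the paper: the decomposition of $\modu^{\ot k}$ via set partitions into permutation modules $\perm^{[n-t,1^t]}$, the Kostka-number/skew-tableau identifications, the binomial expansion of $\modu^{\ot k}$ as an $\S_{n-1}$-module combined with the Stirling identity \eqref{eq:Stir1} for part (b), and the self-duality argument for (c) and (d) all mirror the paper's proof. The only cosmetic difference is that you phrase the permutation module as an induced module and invoke Young's rule explicitly, whereas the paper works directly with the span of ordered $t$-subsets.
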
 
 
\begin{remark}  When $n > k$, the top limit in the summation in part (a) can be taken to be $k$ 
as the Stirling numbers $\lb{k \atop t}\rb$
are 0 for $t > k$.   When $n \leq k$, the
term $[n-t,1^t]$ for $t = n$ should be interpreted as the partition $[1^n]$.  
  In that special case,  $\KK_{\lam, [1^n]} = f^\lam$, the
number standard tableaux of shape $\lam$, as each entry in the tableau appears only once.   The term $t = n-1$ gives the same
Kostka number  $\KK_{\lam, [1^n]} = f^\lam$. 
The only time that the term $t=0$ contributes is when $k = 0$.  The Stirling number $\lb{0 \atop 0}\rb = 1$, and the Kostka number 
$\KK_{\lam, [n]} = 0$ if $\lam \neq [n]$ and $\KK_{[n],[n]} = 1$.     Thus,  $\dimm \Z_0^\lam(n) = \delta_{\lam,[n]}$,
as expected, since $\modu^{\ot 0} = \S_n^{[n]}$ by definition.  In the proof to follow, we will assume $k \geq 1$. 
\end{remark}  
\begin{proof}  (a)  For 
$1 \leq t \leq n$,  the linear span of $t$-element ordered subsets of $\{1,2,\dots, n\}$ forms an $\S_n$-module  
isomorphic  to the permutation module $\perm^{[n-t,1^t]}$.  For  $t = n-1$ and $t = n$,  both modules are isomorphic to $\perm^{[1^n]}$.   We claim that 
\begin{equation}\label{eq:permmods} \modu^{\ot k}  = \sum_{t=1}^n \lb{k \atop t}\rb \perm^{[n-t,1^t]}. \end{equation}
This can be seen as follows:   Let $\mathsf{u}_1,\dots \mathsf{u}_n$ be the basis for $\modu$ that $\S_n$ permutes.   For each set partition
of $\{1,\dots, k\}$ into $t$ blocks, we get a copy of $\perm^{[n-t,1^t]}$ spanned by the vectors 
$\mathsf{u}_{j_1} \ot \mathsf{u}_{j_2} \ot \cdots \ot \mathsf{u}_{j_k}$, where $j_a = j_b$ if and only if $a,b$ are in the same part of the set partition.   
There are $\lb{k \atop t}\rb$ such set partitions.  
The multiplicity of $\S_n^\lam$ in $\modu^{\ot k}$ is obtained from \eqref{eq:permmods} by observing that 
$\S_n^\lam$ has multiplicity $\KK_{\lam,[n-t,1^t]}$ in $\perm^{[n-t,1^t]}$.   By Schur-Weyl duality, the multiplicity of
$\S_n^\lam$ in $\modu^{\ot k}$ equals $\dimm \Z_k^\lam(n)$, and therefore, 
$\dimm \Z_{k,n}^\lam  = \sum_{t=1}^{n} \lb{k \atop t}\rb \KK_{\lam, [n-t, 1^t]}.$ 

The second equality in part (a)  follows from the fact that $\KK_{\lam, [n-t, 1^t]} = 0$
unless  $\lam_1 \geq n-t$, i.e.  unless $t \geq n-\lam_1 = |\lam^\#|$, and from the fact that a semistandard
tableau, whose entries are  $n-t$ zeros and the numbers $1, 2, \dots, t$,  must have the $n-t$ zeros in the first row
and have a standard filling of the skew shape $\lam/[n-t]$.    To see that the last line of part (a) holds, observe that 
when $n-t \geq \lam_2$, any standard tableau of shape $\lam/[n-t]$, has $\lam_1 - (n-t) = t-(n-\lam_1)$ 
entries chosen from $\{1,2, \dots, t\}$ in its first row.    There are 
$${t \choose t-(n-\lam_1)}  = {t \choose n-\lam_1}  = {t \choose |\lam^\#|}$$ ways to select those entries.    
The remaining integers from $\{1,2,\dots, t\}$  fill the shape $\lam^\#$ to give a standard tableau.   Therefore,
$f^{\lam/[n-t]} =  {t \choose |\lam^\#|} f^{\lam^\#}$ if  $n-\lam_2 \geq t$.  
  
For part (b),   identifying $\S_{n-1}$ with the permutations of $\S_n$ that fix $n$, we see that restriction from $\S_n$ to $\S_{n-1}$
 gives  $\modu = \mathsf{M}_{n-1} \oplus \CC \mathsf{u}_n$, where $\mathsf{M}_{n-1}$ is the permutation module of $\S_{n-1}$ spanned
 by the vectors $\mathsf{u}_1, \dots, \mathsf{u}_{n-1}$.   Hence,
 $\modu^{\ot k} \cong \bigoplus_{s=0}^k  {k \choose s} \mathsf{M}_{n-1}^{\ot s}$ as an $\S_{n-1}$-module,  which together with (a)  implies 
 
\begin{align*} 
\dimm \Z_{k+\half,n}^\mu  &=   \sum_{s=0}^{k}  {k \choose s}\,   \dimm \Z_{s,n-1}^\mu \\ 
& =\sum_{s=0}^{k}  {k \choose s}\left(\sum_{t=1}^{n-1}  \lb{s \atop t}\rb \mathsf{K}_{\mu,[n-1-t,1^t]}\right) \\ 
& = \sum_{t=0}^{n-1} \left( \sum_{s=0}^k  {k \choose s}\ \lb{s \atop t}\rb\right) \mathsf{K}_{\mu,[n-1-t,1^t]}   \\
 & = \sum_{t=0}^{n-1} \left( \sum_{s=t}^k  {k \choose s}\ \lb{s \atop t}\rb\right) \mathsf{K}_{\mu,[n-1-t,1^t]}  \\
 & =  \sum_{t=0}^{n-1} \lb{k+1\atop t+1}\rb  \mathsf{K}_{\mu,[n-1-t,1^t]}   \qquad \hbox{\rm using 
\eqref{eq:Stir1}}. 
 \end{align*}  
This establishes the first  equality  in (b)  for $k$ and all $n\geq 1$.  The remainder of (b) can 
be shown by arguments similar to the ones used for part (a). 
  
Part (c) is an immediate consequence of (a),  since $\S_n^{[n]}$ is the trivial $\S_n$-module, 
and $\modu^{\ot k}$ is isomorphic, as an $\S_n$-module,  to its dual module, so that
\begin{align*} \dimm \Z_k(n) &= \dimm \Z_{2k,n}^{[n]} = \sum_{t=0}^{n} \lb{2k \atop t}\rb \KK_{[n], [n-t, 1^t]} \\
 &= \sum_{t=0}^n \lb{2k \atop t}\rb = \mathsf{B}(2k,n) \qquad 
 \left (= \mathsf{B}(2k) \ \  \hbox{\rm if} \  \ n \geq 2k\ \right ). \end{align*}
Part (d) follows readily from (b)
for similar reasons.   \end{proof}  

\begin{remark}  In  \cite[Prop. 2.1]{D},  it was shown using the exponential generating functions of
Goupil and Chauve \cite{GC}  that for the partition $\lam = [\lam_1, \lam_2, \dots, \lam_{\ell(\lam)}]$,  the multiplicity of $\S_n^\lam$ in $\modu^{\ot k}$ (i.e. $\dimm \Z_{k,n}^\lam$) equals  $
f^{\lambda^\#} \,\sum_{t =\vert \lambda^\#\vert }^{n-2} {t \choose {\vert \lambda^\#\vert}} \lb{k \atop t}\rb$
whenever $1 \leq k \leq n-\lambda_2$.
This is a special case of part (a) of Proposition \ref{P:nuprop}.   As mentioned earlier, this result was used 
in \cite{D} to bound the mixing time of a Markov chain on $\S_n$.  \end{remark}

 \begin{remark} Suppose $\nu = [\nu_1,\dots, \nu_{\ell(\nu)}]$ \,  is a partition with \, $0 \leq |\nu| \le k$, and 
for \, $n \geq 2k$,  let  $[n-|\nu|,\nu]$ be the partition of $n$ given by \,  $[n-|\nu|, \nu] : = [n-|\nu|, \nu_1, \ldots, \nu_{\ell(\nu)}]$.  
In the next proposition, we obtain an expression for the dimension of the irreducible $\Z_k(n)$-module 
$\Z_{k,n}^{[n-|\nu|,\nu]}$ (and for the $\Z_{k+\half}(n)$-module
$\Z_{k+\half,n}^{[n-1-|\nu|,\nu]}$ when $n-1 \ge 2k$).   We 
prove  that both dimensions equal $f^\nu =\dimm \S_k^\nu$ when $|\nu| = k$.      
When  $\nu = [k]$, $\dimm \Z_{k,n}^{[n-k,k]} = f^{[k]} =1$ for all $n \ge 2k$.     When $n=2k-1$,  the kernel of the
map $\P_k(n)  \to \Z_k(n)$ is one-dimensional,  since $[n-k,k]$ is not a partition in that case.     
In \cite{BH2},  we describe the kernel of the map $\P_k(n)  \to \Z_k(n)$ for all $n < 2k$.   \end{remark}
\newpage

\begin{prop}\label{P:nuprop} Assume $\nu =[\nu_1, \dots, \nu_{\ell(\nu)}]$ is a partition with $0 \leq |\nu| \le k$.  
\begin{itemize}
\item[{\rm (a)}]   If $0 \leq 2k \le  n$,   then 
\begin{equation}\label{eq:propa}\dimm \Z_{k,n}^{[n-|\nu|,\nu]} = f^{\nu}\,\sum_{t =\vert \nu\vert }^{k} {t \choose {\vert \nu\vert}} \lb{k \atop t}\rb \quad \big(= f^\nu \ \ \hbox{when} \ \
 |\nu| = k\big).\end{equation} 
\item[\rm{(b)}]   If $0 \le 2k \le n-1$,  then
\begin{equation}\label{eq:propb}\dimm \Z_{k+\half,n}^{[n-1-|\nu|,\nu]} =  f^{\nu}\, \sum_{t = |\nu|}^{k} {t \choose |\nu|} \, \lb{k+1\atop t+1}\rb  \quad \big(= f^\nu \ \ \hbox{when} \ \
 |\nu| = k\big).\end{equation}
\end{itemize}
\end{prop}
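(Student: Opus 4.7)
The plan is to derive the proposition as a direct specialization of Theorem~\ref{T:cent}(a) (for part (a)) and Theorem~\ref{T:cent}(b) (for part (b)) to the partition $\lam = [n-|\nu|,\nu]$ (respectively $\mu = [n-1-|\nu|,\nu]$), exploiting the large-$n$ hypothesis to simplify the formulas. The first step is to verify that $\lam = [n-|\nu|,\nu]$ is a genuine partition of $n$ with $\lam^\# = \nu$ and $\lam_2 = \nu_1$; this requires $n - |\nu| \ge \nu_1$, which follows from $n \ge 2k \ge 2|\nu| \ge |\nu| + \nu_1$ since $\nu_1 \le |\nu|$.

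Next, I would substitute into the third expression in Theorem~\ref{T:cent}(a):
\[
\dimm \Z_{k,n}^{[n-|\nu|,\nu]} \ = \ f^{\nu}\sum_{t=|\nu|}^{n-\nu_1}\lb{k\atop t}\rb\binom{t}{|\nu|} \ + \ \sum_{t=n-\nu_1+1}^{n}\lb{k\atop t}\rb f^{\lam/[n-t]}.
\]
The key observation driving the proof is that both sums can be cut off at $t = k$ because $\lb{k\atop t}\rb = 0$ whenever $t > k$. In particular, for the second (correction) sum to vanish entirely we need $n - \nu_1 + 1 > k$, i.e.\ $n - \nu_1 \ge k$; this is where the hypothesis $2k \le n$ is used, together with $\nu_1 \le |\nu| \le k$, so $n - \nu_1 \ge 2k - k = k$. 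After discarding the second sum and using $\lb{k\atop t}\rb = 0$ for $t > k$ to truncate the first sum, the upper limit becomes $k$, which gives the formula in \eqref{eq:propa}.

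For part (b), the argument is completely parallel using the third expression of Theorem~\ref{T:cent}(b) applied to $\mu = [n-1-|\nu|,\nu]$, with $\mu^\# = \nu$ and $\mu_2 = \nu_1$. The correction sum now runs from $t = n-\nu_1$ to $n$ with Stirling weights $\lb{k+1\atop t+1}\rb$, which vanishes provided $n - \nu_1 > k$, i.e.\ $n - 1 - \nu_1 \ge k$; again $2k \le n-1$ and $\nu_1 \le k$ suffice. The remaining first sum, truncated by $\lb{k+1\atop t+1}\rb = 0$ for $t+1 > k+1$, yields \eqref{eq:propb}.

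Finally, the parenthetical assertion that both expressions equal $f^\nu$ when $|\nu| = k$ is immediate: in \eqref{eq:propa} only the term $t = k$ survives, contributing $\binom{k}{k}\lb{k\atop k}\rb = 1$; similarly in \eqref{eq:propb} only $t = k$ survives, contributing $\binom{k}{k}\lb{k+1\atop k+1}\rb = 1$. The main (mild) obstacle is simply bookkeeping the two inequalities $n - \nu_1 \ge k$ and $n - 1 - \nu_1 \ge k$ so as to confirm that the correction sums genuinely vanish in the respective ranges; everything else is a direct substitution into Theorem~\ref{T:cent}.
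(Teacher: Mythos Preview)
Your proposal is correct and follows essentially the same approach as the paper: substitute $\lam=[n-|\nu|,\nu]$ (resp.\ $\mu=[n-1-|\nu|,\nu]$) into the last line of Theorem~\ref{T:cent}(a) (resp.~(b)), then use $\nu_1\le|\nu|\le k$ together with $n\ge 2k$ (resp.\ $n-1\ge 2k$) to conclude $n-\nu_1\ge k$, so the second sum vanishes and the first truncates at $t=k$. Your write-up in fact supplies slightly more detail than the paper's---the explicit check that $[n-|\nu|,\nu]$ is a genuine partition, and the verification of the parenthetical $|\nu|=k$ case---but the argument is the same.
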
 
\begin{proof}
 (a)   From Theorem \ref{T:cent}\,(a),  we know for the partition $ [n-|\nu|,\nu] = [n-|\nu|, \nu_1, \dots, \nu_{\ell(\nu)}]$ that 
\begin{equation}\label{eq:nueq} \dimm \Z_{k,n}^{[n-|\nu|,\nu]} = f^{\nu} \, \sum_{t = {|\nu|}}^{n-\nu_1} \lb{k \atop t}\rb \, {t \choose {|\nu|}} 
 \ + \ \sum_{t=n-\nu_1+1}^n   \lb{k \atop t}\rb \, f^{[n-|\nu|,\nu]/[n-t]}.
\end{equation}
Since  $\nu_1 \leq |\nu| \leq k$, and we are assuming $n \geq 2k$,   it follows that  $n-\nu_1 \geq n-k \geq k$.   Thus, the
first summation equals  $f^{\nu} \, \sum_{t = {|\nu|}}^{k} \lb{k \atop t}\rb \, {t \choose {|\nu|}}$, and the second is 0, which
is the assertion in (a).     The argument for part (b) is completely analogous.  
\end{proof}   
  
The partition algebras $\P_k(\para)$ and $\P_{k+\half}(\para)$ are generically semisimple for 
all $\para\in\CC$ with  $\para\not \in \{0,1,\dots, 2k-1\}$ (see \cite{MS} or \cite[Thm.~3.7]{HR}.   Assume  $\nu = [\nu_1, \dots, \nu_{\ell(\nu)}]$ is a partition with $0 \leq \vert \nu \vert \leq k$, and let  $\P_{k,\para}^\nu$ denote the irreducible $\P_k(\para)$-module
and $\P_{k+\half,\para}^\nu$ denote the irreducible $\P_{k+\half}(\para)$-module  indexed by $\nu$.
The dimension of $\P_{k,\para}^\nu$ (resp. $\P_{k+\half,\para}^\nu$) is the same for all generic values of $\para$. 
Therefore, we can apply Proposition \ref{P:nuprop} with $n = 2k$ and $\lambda = [n-\vert \nu \vert, \nu_1, \ldots, \nu_{\ell(\nu)}] \vdash n$  to conclude the following:   

\begin{cor} \label{C:partdims}   Let  $\nu$ be a partition with $0 \leq \vert \nu \vert \leq k$.   For $\para \not \in 
\{0,1,\ldots, 2k-1\}$,  let  $\P_{k,\para}^\nu$ denote the irreducible $\P_k(\para)$-module
and $\P_{k+\half,\para}^\nu$ denote the irreducible $\P_{k+\half}(\para)$-module  indexed by $\nu$.    Then
\vspace{-.3cm}  \begin{align*}
\dimm \P_{k,\para}^\nu &= f^{\nu}\,\sum_{t =\vert \nu\vert }^{k} {t \choose {\vert \nu\vert}} \lb{k \atop t}\rb\qquad \qquad  \big(= f^\nu \ \ \hbox{when} \ \
 |\nu| = k\big)  \\
\dimm \P_{k+\half,\para}^\nu &= f^{\nu} \, \sum_{t =  {\vert \nu\vert}}^{k} {t \choose {\vert \nu\vert}} \, \lb{k+1\atop t+1}\rb \qquad  \big(= f^\nu \ \ \hbox{when} \ \
 |\nu| = k\big).
\end{align*} \end{cor}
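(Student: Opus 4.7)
The plan is to deduce this from Proposition \ref{P:nuprop} by specializing the parameter $\para$ to a convenient integer value. The key observation, already stated just before the corollary, is that $\P_k(\para)$ and $\P_{k+\half}(\para)$ are semisimple for $\para \notin \{0,1,\ldots,2k-1\}$, and that on this generic locus the dimensions of the irreducible modules indexed by a partition $\nu$ (with $0 \le |\nu| \le k$) do not depend on $\para$. Hence it suffices to compute $\dimm \P_{k,\para}^\nu$ and $\dimm \P_{k+\half,\para}^\nu$ at one well-chosen generic value of the parameter.

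For part (a), I would take $\para = n$ with $n = 2k$. This value lies outside the exceptional set $\{0,1,\ldots,2k-1\}$, so $\P_k(2k)$ is semisimple. Since $n = 2k \geq 2k$, the surjection $\P_k(n) \to \Z_k(n)$ from the introduction is an isomorphism, and under this isomorphism the irreducible $\P_k(2k)$-module labeled by $\nu$ corresponds to the irreducible $\Z_k(2k)$-module labeled by the partition $[2k - |\nu|, \nu] \vdash 2k$. Proposition \ref{P:nuprop}(a) applied with $n = 2k$ then gives
\[
\dimm \P_{k,\para}^\nu \;=\; \dimm \Z_{k,2k}^{[2k-|\nu|,\nu]} \;=\; f^{\nu}\sum_{t=|\nu|}^{k}\binom{t}{|\nu|}\lb{k \atop t}\rb,
\]
and this equality of dimensions persists for all generic $\para$.

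For part (b), the analogous choice is $\para = n$ with $n = 2k+1$: again $n$ avoids the exceptional set, and now $n \geq 2k$ makes the surjection $\P_{k+\half}(n) \to \Z_{k+\half}(n)$ an isomorphism, while the hypothesis $2k \leq n-1$ of Proposition \ref{P:nuprop}(b) is satisfied. The irreducible $\P_{k+\half}(2k+1)$-module labeled by $\nu$ then corresponds to the irreducible $\Z_{k+\half}(2k+1)$-module labeled by $[2k - |\nu|, \nu] \vdash 2k$, and Proposition \ref{P:nuprop}(b) delivers the stated formula.

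There is no real obstacle: the only thing to check is the compatibility of the two constraints on $n$, namely that $n$ is large enough for the surjection $\P \to \Z$ to be an isomorphism and for the hypothesis of Proposition \ref{P:nuprop} to hold, while simultaneously $n$ lies outside $\{0,1,\ldots,2k-1\}$. The choices $n = 2k$ and $n = 2k+1$ are precisely the minimal values that achieve both. The parenthetical evaluation at $|\nu| = k$ is immediate: the sum collapses to its single nonzero term at $t = k$, giving $\binom{k}{k}\lb{k \atop k}\rb = 1$ in (a) and $\binom{k}{k}\lb{k+1 \atop k+1}\rb = 1$ in (b), so both dimensions reduce to $f^\nu$.
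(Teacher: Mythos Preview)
Your argument is correct and follows essentially the same route as the paper: use the constancy of the irreducible dimensions on the generic locus to reduce to a specific integer value of $\para = n$, invoke the isomorphism $\P_k(n) \cong \Z_k(n)$ (resp.\ $\P_{k+\half}(n) \cong \Z_{k+\half}(n)$) for $n \ge 2k$, and then read off the answer from Proposition~\ref{P:nuprop}. Your choice of $n = 2k+1$ for part~(b) is in fact slightly more careful than the paper's brief ``apply Proposition~\ref{P:nuprop} with $n = 2k$,'' since the hypothesis of Proposition~\ref{P:nuprop}(b) is $2k \le n-1$.
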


\subsection{Bijective proof of Theorem \ref{T:cent} (a)}\label{ss:bijection}

By Theorem \ref{T:cent}(a),  we know that $\dimm \Z_{k,n}^\lambda$ equals the number of pairs $(P,T)$ where $P$ is a set partition of $\{1, 2, \ldots, k\}$ into $t$ blocks for some $t \in \{1,\dots,n\}$,  and $T$ is a semistandard tableau of shape $\lambda$  filled with $n-t$ zeros and 
 $t$ distinct numbers from $\ZZ_{>0}$, so that $T$ has type $[n-t,1^t]$.  By Section \ref{sec:Bratteli},  we know that this dimension is also equal to the number of paths from the root of the Bratteli diagram $\mathcal{B}(\S_n,\S_{n-1})$ to $\lambda \in \Lambda_{k,\S_n}$. Such paths (also referred to as vacillating tableaux) are given by a sequence of partitions
$$
\left(\lambda^{(0)} = [n],\, \lambda^{(\half)} = [n-1],\, \lambda^{(1)},\lambda^{(1+\half)}, \ldots, \lambda^{(k-1)}, \lambda^{(k - \half)}, \lambda^{(k)} = \lambda\right)
$$
such that $\lambda^{(i)} \in \Lambda_{i,\S_n}$, \, $\lambda^{(i-\half)} \in \Lambda_{i-\half,\S_{n-1}}$ for each $i$, and
\begin{enumerate}
\item[(a)] $\lambda^{(i-\half)} = \lambda^{(i-1)} - \square$,
\item[(b)] $\lambda^{(i)} = \lambda^{(i - \half)} + \square$,
\end{enumerate}
for each integer $1 \le i \le k$. In this section, we demonstrate a bijection between  paths and pairs $(P,T)$,  thereby giving a combinatorial proof of Theorem \ref{T:cent}(a). The corresponding bijection for Theorem \ref{T:cent}(b) is gotten by applying this same bijection on paths to $\mu \in \Lambda_{k-\half,\S_{n-1}}$. We assume familiarity with the RSK row-insertion algorithm (see for example \cite[Sec.~7.11]{S2}), and let  $T \RSK b$ denote row insertion of the integer $b$ into the semistandard tableau $T$.  The bijection here, which works for all $n \ge 1$ and $k \ge 0$,  extends that of \cite[Thm.~2.4]{CDDSY}, which holds for $n \ge 2k$.   It  is easily adaptable to give a combinatorial proof
of Theorem \ref{T:cent}\,(b). 

\bigskip
\noindent{\bf Bijection from paths to pairs $\boldsymbol{(P,T)}$}: \ Given a path $(\lambda^{(0)}, \lambda^{(\half)},\ldots,\lambda^{(k)}=\lambda)$ to $\lambda \in \Lambda_{k,\S_n}$, we recursively construct a sequence $(P_0, T_0), (P_\half,T_\half), (P_1, T_1), \ldots, (P_k, T_k)$ such that, for each $i$, $P_i$ is a set partition of $\{1, \ldots, \lceil i \rceil\}$ into $t$ blocks,  and $T_i$ is a semistandard tableau of shape $\lambda^{(i)}$ with $n-t$ zeros and nonzero entries from the set $\mathsf{max}(P_i)$  whose elements are the maximal entries in the $t$ blocks of $P_i$.  Then  $(P_k,T_k)$ is the pair associated with the path $(\lambda^{(0)}, \lambda^{(\half)},\ldots,\lambda^{(k)}=\lambda)$.

Let $P_0 = \emptyset$ and let $T_0$ be the semistandard  tableau of shape $[n]$ and type $[n]$, i.e., with each entry equal to 0.
Then for each integer $i = 1, 2, \ldots, k$,  perform these steps.

\begin{itemize}
\item[(1)] Construct $(P_{i - \half}, T_{i-\half})$ from  $(P_{i - 1}, T_{i-1})$ as follows:  \ Let $b$ be the unique nonegative integer such that $T_{i-1} = (T_{i - \half} \RSK b)$. Since $b \in T_{i-1}$,  we know that $0 \le b < i$. If $b = 0$, then $P_{i - \half}$ is obtained by adding the block $\{i\}$ to $P_{i-1}$. If $b > 0$, then $P_{i - \half}$ is obtained by adding $i$ to the block that contains $b$ in $P_{i-1}$.

\item[(2)] Construct $(P_i,T_i)$ from $(P_{i - \half}, T_{i-\half})$ by letting  $P_i$ equal  $P_{i-\half}$ and $T_i$ be the column strict tableau obtained  from $T_{i-\half}$ by adding the entry $i$ in the box  $\lambda^i \setminus \lambda^{i - \half}$.
\end{itemize}
By the above construction,  $P_i$ is a set partition of $\{1, \ldots, \lceil i \rceil\}$ for each $i$,  and if $P_i$ has $t$ parts,  then $T_i$ is a semistandard tableau with $n-t$ zeros and with the elements of $\mathsf{max}(P_i)$ as its nonzero entries. 

The map is bijective since the above construction can be reversed:    Given a pair $(P,T)$ consisting of a set partition $P$ of $
\{1,\dots,k\}$ into $t$ blocks and a semistandard tableau $T$ of shape $\lambda \in \Lambda_{k,\S_n}$ filled with $n-t$ zeros and the elements of $\mathsf{max}(P)$, we produce a path $(\lambda^0=[n], \lambda^{\half},\ldots,\lambda^k=\lambda)$  by performing these steps:
Start with $P_k = P,$ $T_k = T$,  and work backwards to produce the sequence $(P_k,T_k),(P_{k-\half},T_{k - \half}), $ $(P_{k-1},T_{k-1}), \ldots, (P_0,T_0)$ as follows:
\begin{enumerate}
\item[$(1)'$] Construct $(P_{i-\half},T_{i - \half})$ from $(P_{i},T_{i })$ by letting $P_{i-\half} = P_i$ and deleting $i$ from $T_i$.
\item[$(2)'$] Construct $(P_{i-1},T_{i - 1})$ from $(P_{i-\half},T_{i -\half})$  by the following procedure: \  If $i$ is a singleton block in $P_{i-\half}$,  then $T_{i - 1} = (T_{i - \half} \RSK 0)$. If $i$ is not a singleton block in $P_{i - \half}$,  then  $T_{i - 1} = (T_{i - \half} \RSK b)$, where $b$ is the second largest element of the block containing $i$. Let $P_{i-1}$ be obtained by deleting $\{i \}$ from $P_{i-\half}$.
\end{enumerate}
If  $\lambda^{(i)}$ is the partition shape of $T_i$ for $i=0,\half, 1, \dots, k$,   then $(\lambda^{(0)}, \lambda^{(\half)}, \ldots, \lambda^{(k)})$ to $\lambda$ is the corresponding path in the Bratteli diagram.  This bijection is illustrated in the next example.

\Yboxdim{9.5pt} 
\begin{example}\label{ex:bijection} If $n= 4, k = 3,$ and  $\lambda = [2,2]$, then
\begin{align*}
\dimm \Z_{3,4}^{[2,2]}
&   =  \lb 3 \atop 1 \rb \mathsf{K}_{[2,2],[3,1]} + \lb 3 \atop 2 \rb \mathsf{K}_{[2,2],[2,1,1]} +   \lb 3 \atop 3 \rb \mathsf{K}_{[2,2],[1,1,1,1]} 
 =  1  \cdot 0 + 3 \cdot 1 + 1 \cdot 2   = 5.
\end{align*}
This is the subscript $5$ on $[2,2]$ at level 3 in the Bratteli diagram in Figure \ref{fig:Sbratteli}.
The five corresponding pairs $(P,T)$ of set partitions and semistandard tableaux are:
$$
\big(\{1,2\mid3\},  \young(00,23)\big), \quad
\big(\{1,3\mid2\} ,  \young(00,23)\big), \quad
\big( \{1\mid2,3\} ,  \young(00,13)\big), \quad
\big( \{1\mid2\mid3\},  \young(02,13)\big), \quad
\big(\{1\mid2\mid3\},   \young(01,23)\big).
$$
The five paths to $[2,2] \in \Lambda_{3,\S_4}$ and the corresponding bijections with these pairs  are illustrated in Figure \ref{fig:bijection}.

\begin{figure}[h!]
\begin{equation*}
\begin{array}{l|ll|ll|ll|ll|l|}
\text{level} & p_1 && p_2  && p_3 && p_4 && p_5 \\
\hline
\ell=0\quad &  \young(0000)    && \young(0000)    && \young(0000)    && \young(0000) && \young(0000) \\
              &  \emptyset              && \emptyset               && \emptyset                && \emptyset        &&  \emptyset   \\ \hdashline
\ell=\half &      \young(000)\RSK0    && \young(000)\RSK0    && \young(000)\RSK0    && \young(000)\RSK0 &   & \young(000)\RSK0 \\ 
              &  \{1\}               &&  \{1\}                &&  \{1\}                 &&  \{1\}         &&   \{1\}     \\ \hdashline
\ell=1 &           \young(0001)    && \young(000,1) && \young(000,1) && \young(000,1) && \young(000,1) \\ 
              &  \{1\}               &&  \{1\}                &&  \{1\}                 &&  \{1\}         &&   \{1\}     \\ \hdashline
\ell=1\half &    \young(000)\RSK1   && \young(001)\RSK0    && \young(00,1)\RSK0   && \young(00,1)\RSK0 &&  \young(00,1)\RSK0\\ 
             &  \{1,2\}               &&  \{1\mid2\}                &&  \{1\mid2\}                 &&  \{1\mid2\}         &&   \{1\mid2\}     \\ \hdashline
\ell=2&           \young(000,2) && \young(001,2) && \young(002,1) && \young(00,12) && \young(00,1,2) \\  
             &  \{1,2\}               &&  \{1\mid2\}                &&  \{1\mid2\}                 &&  \{1\mid2\}         &&   \{1\mid2\}     \\ \hdashline
\ell=2\half &    \young(00,2)\RSK0   && \young(00,2)\RSK1&& \young(00,1)\RSK2 && \young(02,1)\RSK0 &&  \young(01,2)\RSK0 \\  
             &  \{1,2\mid3\}               &&  \{1,3\mid2\}                &&  \{1\mid2,3\}                 &&  \{1\mid2\mid3\}         &&   \{1\mid2\mid3\}     \\ \hdashline
\ell=3 &           \young(00,23) && \young(00,23) && \young(00,13) && \young(02,13) && \young(01,23) \\ 
             &  \{1,2\mid3\}               &&  \{1,3\mid2\}                &&  \{1\mid2,3\}                 &&  \{1\mid2\mid3\}         &&   \{1\mid2\mid3\}     \\   \hline
\end{array}
\end{equation*}
\caption{The bijection between the five paths to $\lambda = [2,2] \in \Lambda_{3,\S_4}$ in the Bratteli diagram $\mathcal{B}(\S_4,\S_3)$ and pairs $(P,T)$ of Example \ref{ex:bijection} consisting of a set partition $P$ of $\{1,2,3\}$ into $t$ blocks and a semistandard tableau $T$ filled with $4-t$ zeroes and the maximum entries of the blocks of $P$. \label{fig:bijection}}
\end{figure}
\end{example}

\subsection{Dimension Examples} 

\begin{example} Assume $\lam = [\lam_1,\lam_2, \ldots, \lam_{\ell(\lam)}] \vdash n$ and $1 \le\lam_2 \le 2$.   We claim that under these assumptions, Theorem \ref{T:cent}(a) simplifies to \begin{align}\begin{split}\label{eq:lam2small}
 \dimm \Z^{\lambda}_{k,n}  
& = \begin{cases}  \displaystyle{ f^{\lambda^\#} \sum_{t = |\lambda^\#|}^{n-2} \binom{t}{|\lambda^\#|} \lb{k \atop t}\rb + f^\lambda 
\bigg(\lb k \atop n-1\rb + \lb k \atop n\rb\bigg)}  & \qquad  \hbox{if} \ \  \lam_1 > 1,  \\
\displaystyle{\lb k \atop n-1\rb + \lb k \atop n\rb}  & \qquad  \hbox{if} \ \  \lam_1 = 1, \end{cases}\end{split}
\end{align}
where $\lambda^\# = [\lambda_2, \ldots, \lambda_{\ell(\lambda)}]$. In particular, this formula holds for all $\lambda$ when $n \le 5$.
When $\lam_1 = 1$, then  $\lam = [1^n]$, and this says  $\dimm \Z^{[1^n]}_{k,n} = \displaystyle{\lb k \atop n-1\rb + \lb k \atop n\rb}$ for all $n \ge 1$.

To verify this assertion,  we start from the last line of Theorem \ref{T:cent} (a),
\begin{equation}\label{eq:lastline}\dimm \Z_{k,n}^\lam = f^{\lam^\#} \, \sum_{t = {|\lam^\#|}}^{n-\lam_2} \lb{k \atop t}\rb \, {t \choose {|\lam^\#|}} 
 \ + \ \sum_{t=n-\lam_2+1}^n   \lb{k \atop t}\rb \, f^{\lam/[n-t]}.\end{equation}  When $\lam_1 = 1$, then $\lam = [1^n]$ and $\lam^\# = [1^{n-1}]$,  
and  this reduces to 
$$\dimm \Z_{k,n}^{[1^n]} = f^{[1^{n-1}]} \lb{k \atop n-1}\rb \ + \ f^{[1^n]}\lb{k \atop n}\rb = \lb{k \atop n-1}\rb \ + \ \lb{k \atop n}\rb.$$ 
When $\lam_1 > 1$, and $\lam_2 = 1$,   then  $\lam^\# = [1^{n-\lam_1}]$,  and 
\begin{align*} \dimm \Z_{k,n}^\lam &= f^{\lam^\#} \, \sum_{t = {|\lam^\#|}}^{n-1} \lb{k \atop t}\rb \, {t \choose {|\lam^\#|}} 
 \ + \  f^{\lam}  \lb{k \atop n}\rb \\
&= f^{\lam^\#} \, \sum_{t = {|\lam^\#|}}^{n-2} \lb{k \atop t}\rb \, {t \choose {|\lam^\#|}} \ + \  f^{\lam^\#}  \lb{k \atop n-1}\rb {n-1 \choose 
{|\lam^\#|}} 
 \ + \  f^{\lam}  \lb{k \atop n}\rb \\ 
 & = f^{\lam^\#} \sum_{t = |\lam^\#|}^{n-2} \binom{t}{|\lambda^\#|} \lb{k \atop t}\rb + f^\lambda 
\bigg(\lb{k \atop n-1}\rb + \lb{k \atop n}\rb\bigg), 
 \end{align*} 
since $f^{\lam} = f^{\lam^\#}{n-1 \choose n-\lam_1}$.  Finally, when $\lam_2 = 2$,  the assertion is exactly \eqref{eq:lastline},
as $f^{\lam/[1]} = f^\lam$.   
 \end{example}

\begin{example} Since $f^{[n]} = 1$,  {\rm Corollary  \ref{C:partdims}  implies for all $k \geq n$ and all
generic values of  $\para$  that \begin{equation}\label{eq:generic} \dimm \P_{k,\para}^{[n]} = \sum_{t=n}^k {t \choose n} \lb{k \atop t}\rb  \quad  \hbox{and} \quad  
\dimm\P_{k + \half,\para}^{[n]}  =   \sum_{t=n}^{k}{t \choose n} \lb{k+1 \atop t+1}\rb.\end{equation}  
The last line  \eqref{eq:lastline} of part (a) of Theorem \ref{T:cent}  gives
\begin{equation}\label{eq:trivial} 
 \dimm \Z_{k,n}^{[n]} =  \sum_{t=0}^n \lb{k \atop t}\rb = \mathsf{B}(k,n)\qquad  \left( \, =  \mathsf{B}(k) \ \, \hbox {when}  \ \,  n \geq k\right),  
 \end{equation} 
 while the last line of part (b) of Theorem \ref{T:cent} says
\begin{equation}\label{eq:halftriv}
\hspace{-.5cm}  \dimm \Z_{k+\half,n}^{[n-1]} = \sum_{t=1}^n \lb{k+1 \atop j}\rb \  = \mathsf{B}(k+1,n) \qquad  \left( =  \mathsf{B}(k+1)\ \hbox {when}  \  n \geq k+1\right).   
\end{equation}} 
 \end{example}

\begin{remark}{\rm  A \emph{(Gelfand) model}  for an algebra is a  module in which each irreducible module
appears as a direct summand with multiplicity one.   In \cite{HRe},  Halverson and Reeks construct models for certain diagram algebras,
including  the partition algebras $\P_k(\para)$ for generic $\para$, 
using basis diagrams invariant under reflection about the horizontal
axis (the symmetric diagrams) and  the diagram conjugation action of $\P_k(\para)$ on them.   
The model $\Ms_{\P_k}$ for $\P_k(\para)$  decomposes into submodules $\Ms_{\P_k} =\bigoplus_{r,p} \Ms^{r,p}_{\P_k}$,
where $\Ms^{r,p}_{\P_k} = \bigoplus_{{\nu \vdash r} \atop {\mathsf{odd}(\nu) = p}}  \P_{k,\para}^\nu$
according to the size $r = |\nu|$ of the partition $\nu$ and its number  $p = \mathsf{odd}(\nu)$ of odd parts.   By enumerating symmetric diagrams,  
they determine that 
\begin{equation}\label{eq:model} \dimm \Ms^{r,p}_{\P_k} = \sum_{t= r}^k  {r \choose p}\,(r-p-1)!!\, {t \choose r}\lb{k \atop t}\rb =  {r \choose p}\,(r-p-1)!!\,\sum_{t = r}^k{t \choose r}\lb{k \atop t}\rb, \end{equation}
where $r-p$ is even and $(r-p-1)!! = (r-p-1)(r-p-3) \cdots 3 \cdot 1$.  The factor ${r \choose p}\,(r-p-1)!!$ comes from 
 the fact (see \cite{HRe}) that 
\begin{equation}\label{eq:invol} \sum_{{\nu \vdash r} \atop {\mathsf{odd}(\nu) = p}}  f^\nu = | \mathrm{I}^{r,p} \vert = {r \choose p}\,(r-p-1)!!, 
\end{equation}
where $ \mathrm{I}^{r,p}$ is the set of involutions (elements of order 2) with $p$ fixed points in the symmetric group $\S_r$.
Corollary \ref{C:partdims}  and \eqref{eq:invol}  give an alternate proof of \eqref{eq:model}: 
\begin{equation*} 
\dimm \Ms^{r,p}_{\P_k} =\!\!\!\! \sum_{{\nu \vdash r} \atop {\mathsf{odd}(\nu) = p}}\!\!\! \dimm \P_{k,\para}^\nu 
= \!\!\!\! \sum_{{\nu \vdash r} \atop {\mathsf{odd}(\nu) = p}}\!\!\!  f^\nu \, \Bigg( \sum_{t=\vert \nu \vert}^k 
 {t \choose \vert \nu \vert}\lb{k \atop t}\rb\Bigg) 
 ={r \choose p}(r-p-1)!! \,\!\!  \sum_{t=r}^k 
 {t \choose r}\lb{k \atop t}\rb .  \end{equation*}
 }
\end{remark}   
 
  \bigskip  
\begin{section}{Dimension formulas for alternating group centralizer algebras}\end{section}  

The restriction rules in  \eqref{eq:branch} combined with Theorem \ref{T:res-ind}\,(b)  can be used to derive expressions for the  dimensions of the irreducible modules for the alternating group centralizer algebras $\WZ_k(n)$ and  $\WZ_{k+\half}(n)$ from the dimension formulas for irreducible modules for  $\Z_k(n)$ and  $\Z_{k+\half}(n)$ in Theorem \ref{T:cent}.

\begin{thm}\label{T:Ardims} Assume $k \in \mathbb{Z}_{\geq 0}$.  The dimensions of the irreducible modules for $\WZ_k(n)$ and  $\WZ_{k+\half}(n)$ are as follows (using notation from Table \ref{table1}).
\begin{itemize}  
\item[{\rm (a)}] For $\lam\vdash n$ and $\lam \in \Lambda_{k,\A_n}$,
$$
\begin{array}{ll}
\dimm \WZ_{k,n}^\lam  = \dimm \Z_{k,n}^\lam  +  \dimm \Z_{k,n}^{\lam^{\ast}}, \quad & \text{if $\lam > \lam^{\ast}$,} \\
\dimm \WZ_{k,n}^{\lam^{+}} = \dimm \WZ_{k,n}^{\lam^{-}} = \dimm \Z_{k,n}^{\lam}, \quad & \text{if $\lam = \lam^{\ast}$,} \\
\end{array}
$$
where $\dimm \Z_{k,n}^\lam$ and $\dimm \Z_{k,n}^{\lam^{\ast}}$ are given by the formula in Theorem \ref{T:cent}\,(a).

\item[{\rm (b)}] For $\mu \vdash n-1$ and $\mu \in \Lambda_{k,\A_{n-1}}$,
$$
\begin{array}{ll}
\dimm \WZ_{k+\half,n}^\mu  = \dimm \Z_{k+\half,n}^\mu  +  \dimm \Z_{k+\half,n}^{\mu^{\ast}}, \quad & \text{if $\mu > \mu^{\ast}$,} \\
\dimm \WZ_{k+\half,n}^{\mu^{+}} = \dimm \WZ_{k+\half,n}^{\mu^{-}} =\dimm \Z_{k+\half,n}^{\mu}, \quad & \text{if $\mu = \mu^{\ast}$,} \\
\end{array}
$$
where $\dimm \Z_{k+\half,n}^\mu$ and $\dimm \Z_{k+\half,n}^{\mu^{\ast}}$ are given by the formula in Theorem \ref{T:cent}\,(b).
 \end{itemize}
 \end{thm}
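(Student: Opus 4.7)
The plan is to observe that part (a) has essentially been proved already as Proposition \ref{P:zdim}, and that part (b) follows by the same argument applied to the subgroup pair $(\S_{n-1}, \A_{n-1})$ acting on $\modu^{\ot k}$ (by restriction from $\S_n$). I will then substitute the explicit dimension formulas from Theorem \ref{T:cent} to finish.

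For part (a), I would apply Theorem \ref{T:res-ind}(b) to the group-subgroup pair $(\GG, \HH) = (\S_n, \A_n)$ acting on the finite-dimensional module $\xx = \modu^{\ot k}$. The restriction coefficients $c_\alpha^\lam$ are read off from \eqref{eq:Snrest}: if $\lam \vdash n$ with $\lam > \lam^{\ast}$, then $\mathsf{Res}^{\S_n}_{\A_n}(\S_n^\lam) = \A_n^\lam$, and $\lam$ and $\lam^{\ast}$ are the only two partitions of $n$ whose $\S_n$-irreducibles restrict to contain $\A_n^\lam$ (each with coefficient $1$); so Theorem \ref{T:res-ind}(b) gives $\ds_{\xx,\A_n}^\lam = \ds_{\xx,\S_n}^\lam + \ds_{\xx,\S_n}^{\lam^{\ast}}$. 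If instead $\lam = \lam^{\ast}$, then $\mathsf{Res}^{\S_n}_{\A_n}(\S_n^\lam) = \A_n^{\lam^{+}} \oplus \A_n^{\lam^{-}}$ and $\lam$ is the unique partition restricting to each of $\A_n^{\lam^{\pm}}$, each with coefficient $1$, so Theorem \ref{T:res-ind}(b) yields $\ds_{\xx,\A_n}^{\lam^{\pm}} = \ds_{\xx,\S_n}^\lam$. These are exactly the claims of part (a), and the explicit expressions in terms of Stirling numbers and Kostka numbers are obtained by substituting the formula from Theorem \ref{T:cent}(a).

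For part (b), I would apply Theorem \ref{T:res-ind}(b) to the pair $(\GG, \HH) = (\S_{n-1}, \A_{n-1})$, still on $\xx = \modu^{\ot k}$ but now viewed as an $\S_{n-1}$-module by restriction (so that $\End_{\S_{n-1}}(\modu^{\ot k}) = \Z_{k+\half}(n)$ and $\End_{\A_{n-1}}(\modu^{\ot k}) = \WZ_{k+\half}(n)$). The restriction/branching rules from \eqref{eq:Snrest}, with $n$ replaced by $n-1$, apply verbatim to the pair $(\S_{n-1}, \A_{n-1})$, so the computation is formally identical to the one in part (a): for $\mu \vdash n-1$ with $\mu > \mu^{\ast}$, the two $\S_{n-1}$-irreducibles $\S_{n-1}^\mu, \S_{n-1}^{\mu^{\ast}}$ each contribute with coefficient $1$ to $\A_{n-1}^\mu$, and for $\mu = \mu^{\ast}$ the single module $\S_{n-1}^\mu$ contributes with coefficient $1$ to each of $\A_{n-1}^{\mu^{\pm}}$. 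Plugging into Theorem \ref{T:cent}(b) supplies the explicit formulas.

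There is no real obstacle here: the theorem is essentially a direct application of the centralizer-algebra Frobenius reciprocity (Theorem \ref{T:res-ind}(b)) together with the well-known Clifford-theoretic restriction rules \eqref{eq:Snrest}, so the only step requiring any care is tracking which $\S_n$-irreducibles (or $\S_{n-1}$-irreducibles) restrict to contain a given $\A_n$-irreducible (or $\A_{n-1}$-irreducible), and with what multiplicity. Since these multiplicities are always $1$ in \eqref{eq:Snrest}, the formulas reduce to the simple sums in the statement.
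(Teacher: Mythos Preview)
Your proposal is correct and matches the paper's own approach: the paper states just before the theorem that it follows from the restriction rules in \eqref{eq:branch} (i.e.\ Proposition~\ref{P:zdim}, itself a consequence of Theorem~\ref{T:res-ind}(b) and \eqref{eq:Snrest}) combined with Theorem~\ref{T:res-ind}(b), which is exactly what you do for part~(a), and your extension to the pair $(\S_{n-1},\A_{n-1})$ for part~(b) is the intended analogous argument.
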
  

\bigskip
 \noindent
The next corollary gives some particular instances of Theorem \ref{T:Ardims} of  special interest.

 \begin{cor}\label{C:Ardims}  Assume $k \in \mathbb{Z}_{\geq 0}$ and $r \ge 2$.  Recall the definitions of the Bell numbers $\mathsf{B}(k,n)$ and $\mathsf{B}(k)$ from
  \eqref{eq:Bell1} and  \eqref{eq:Bell2}.
 \begin{itemize} 
\item[{\rm (a)}] 
$\displaystyle{\dimm \WZ_{k,n}^{[n]} = \dimm \Z_{k,n}^{[n]} + \dimm \Z_{k,n}^{[1^n]}= \sum_{t =0 }^{n}\lb{k \atop t}\rb \, + \, \lb{k \atop n-1}\rb + \lb{k \atop n}\rb}$ \\
$\phantom{.}\hskip.535in\displaystyle{ = \mathsf{B}(k,n) + \, \lb{k \atop n-1}\rb + \lb{k \atop n}\rb.}$

\item[{\rm (b)}] $\displaystyle{\dimm \WZ_{k}(n) =  \dimm \WZ_{2k,n}^{[n]} = \mathsf{B}(2k,n) + \, \lb{2k \atop n-1}\rb + \lb{2k \atop n}\rb}$.

In particular, $\dimm \WZ_{k}(n) = \mathsf{B}(2k) + 1$ if $n = 2k+1$, and $\dimm \WZ_{k}(n) = \mathsf{B}(2k)$ if $n > 2k+1$. 
 
\item[{\rm (c)}]   $\displaystyle{\dimm \WZ_{k+\half,n}^{[n-1]} =    \dimm \Z_{k+\half,n}^{[n-1]} +
  \dimm \WZ_{k+\half,n}^{[1^{n-1}]} =
    \sum_{j = 1}^{n}  \lb{k+1\atop j}\rb
 + \   \lb{k+1 \atop  n-1}\rb + \lb{k+1 \atop  n}\rb}$ \\ 
$\phantom{.}\hskip.70in\displaystyle{ =  \mathsf{B}(k+1,n) +  \   \lb{k+1 \atop  n-1}\rb + \lb{k+1 \atop  n}\rb 
= \dimm \WZ_{k+1,n}^{[n]}.}$ 

\item[{\rm (d)}] $\displaystyle{ \dimm \WZ_{k+\half}(n) =  \dimm \WZ_{2k+\half,n}^{[n-1]} = \mathsf{B}(2k+1,n) +    \lb{2k+1 \atop  n-1}\rb + \lb{2k+1 \atop  n}\rb.}$ 

In particular, $\dimm \WZ_{k+\half}(n) = \mathsf{B}(2k+1)+1$ if $n = 2k +2$, and $\dimm \WZ_{k+\half}(n)  = \mathsf{B}(2k+1)$ if $n > k +2$.

\end{itemize}
 \end{cor}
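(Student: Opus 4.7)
The plan is to derive all four parts as direct specializations of Theorem \ref{T:Ardims} combined with Theorem \ref{T:cent}, together with the self-duality principle for centralizer algebras from the introduction.

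For part (a), I would take $\lambda = [n]$ in Theorem \ref{T:Ardims}(a). Since $[n]^{\ast} = [1^n] \neq [n]$ (and $[n] > [1^n]$ in dominance order for $n \ge 2$), the first branch of Theorem \ref{T:Ardims}(a) applies and yields $\dimm \WZ_{k,n}^{[n]} = \dimm \Z_{k,n}^{[n]} + \dimm \Z_{k,n}^{[1^n]}$. The first summand equals $\mathsf{B}(k,n)$ by \eqref{eq:trivial} (or directly from Theorem \ref{T:cent}(a), since $\mathsf{K}_{[n],[n-t,1^t]} = 1$ for all $0 \le t \le n$). For the second, apply Theorem \ref{T:cent}(a) to $\lambda = [1^n]$: the Kostka number $\mathsf{K}_{[1^n],[n-t,1^t]}$ vanishes unless the column-strict filling forces $n-t \le 1$, so it equals $1$ precisely when $t = n-1$ or $t = n$ and $0$ otherwise, giving $\dimm \Z_{k,n}^{[1^n]} = \lb{k \atop n-1}\rb + \lb{k \atop n}\rb$. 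Summing produces the claimed formula.

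Part (c) is completely parallel: take $\mu = [n-1]$ in Theorem \ref{T:Ardims}(b), observe $\mu > \mu^{\ast} = [1^{n-1}]$, and invoke Theorem \ref{T:cent}(b) for each piece. The Kostka numbers are $1$ for every $0 \le t \le n-1$ when $\mu = [n-1]$, giving $\dimm \Z_{k+\half,n}^{[n-1]} = \sum_{t=0}^{n-1} \lb{k+1 \atop t+1}\rb = \sum_{j=1}^{n} \lb{k+1 \atop j}\rb = \mathsf{B}(k+1,n)$; and they vanish for $\mu = [1^{n-1}]$ except at $t = n-2, n-1$, contributing $\lb{k+1 \atop n-1}\rb + \lb{k+1 \atop n}\rb$. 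The ``equivalently, $= \dimm \WZ_{k+1,n}^{[n]}$'' assertion is then an immediate consequence: applying part (a) with $k$ replaced by $k+1$ produces the identical expression. (Structurally, this matches the Bratteli-diagram fact that the only predecessor of the root vertex $[n] \in \Lambda_{k+1,\A_n}$ at half-integer level $k+\half$ is $[n-1] \in \Lambda_{k+\half,\A_{n-1}}$.)

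Parts (b) and (d) will then follow from the self-duality observation noted in the introduction: for any self-dual $\GG$-module $\xx$, the dimension of $\End_{\GG}(\xx^{\otimes k})$ equals the dimension of the space of $\GG$-invariants in $\xx^{\otimes 2k}$, which is in turn the dimension of the irreducible centralizer module indexed by the trivial $\GG$-module at the $2k$-th tensor level. The permutation module $\modu$ is self-dual both as an $\A_n$-module and as an $\A_{n-1}$-module, and the trivial modules $\A_n^{[n]}$ and $\A_{n-1}^{[n-1]}$ are indexed by $[n]$ and $[n-1]$ respectively in Table \ref{table1}. Thus $\dimm \WZ_k(n) = \dimm \WZ_{2k,n}^{[n]}$ and $\dimm \WZ_{k+\half}(n) = \dimm \WZ_{2k+\half,n}^{[n-1]}$, and the claimed formulas follow upon substituting $k \mapsto 2k$ in parts (a) and (c).

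Finally, I would verify the boundary specializations by evaluating Stirling numbers: when $n = 2k+1$ one has $\mathsf{B}(2k,n) = \mathsf{B}(2k)$, $\lb{2k \atop n-1}\rb = \lb{2k \atop 2k}\rb = 1$, and $\lb{2k \atop n}\rb = 0$, yielding $\mathsf{B}(2k) + 1$; when $n > 2k+1$ both Stirling corrections vanish and only $\mathsf{B}(2k)$ survives. The analogous check for (d) with $n = 2k+2$ (respectively $n > 2k+2$) is identical. The proof is essentially bookkeeping; the only mildly nontrivial observation is the vanishing pattern of the Kostka numbers $\mathsf{K}_{[1^m],[m-t,1^t]}$, but this is immediate from the column-strict condition and there is no real obstacle.
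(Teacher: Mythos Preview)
Your proposal is correct and follows exactly the route the paper intends: the corollary is stated without proof as ``particular instances of Theorem~\ref{T:Ardims} of special interest,'' and your argument supplies precisely the missing bookkeeping---specializing Theorem~\ref{T:Ardims} to $\lambda=[n]$ and $\mu=[n-1]$, reading off $\dimm\Z_{k,n}^{[n]}$ and $\dimm\Z_{k,n}^{[1^n]}$ from Theorem~\ref{T:cent} (the latter via the Kostka vanishing, which the paper itself records in \eqref{eq:lam2small}), and invoking self-duality of $\modu$ for parts (b) and (d). Your handling of the boundary cases is also correct; note that the condition ``$n>k+2$'' in part~(d) is evidently a typo for ``$n>2k+2$,'' which is what you verify.
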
     
 \medskip   
 
 \begin{remark}  Part (b) of Corollary \ref{C:Ardims} was shown by Bloss \cite{Bl1,Bl2}  by different 
 methods.  Part (d) extends that result to the centralizer algebras  $\WZ_{k+\half}(n)$ and gives
 some indication of how the algebras $\WZ_{k+\half}(n)$ ``fill the gap'' between the
 integer levels.  \end{remark} 
 
\begin{example}   Corollary \ref{C:Ardims} (c) says that  for $k=3$ and $n=4$,   
$$ \dimm \WZ_{3+\half,4}^{[3]} = \sum_{j = 1}^{4}  \lb{4\atop j}\rb
 + \   \lb{4 \atop  3}\rb + \lb{4 \atop  4}\rb  = 1+7+2 (6+1) = 22.$$  
This is the subscript on the partition $[3]$ in the last row of the Bratteli diagram in Figure 2.\end{example}
 
 \section{The centralizer algebra $\QZ_k(n): = \End_{\S_n}(\Rs_n^{\ot k})$\, for \, $\Rs_n =  \S_n^{[n-1,1]}$ \\  and
 its relatives\label{section:quasi}}

 In \cite{DO},  Daugherty and Orellana investigated the centralizer algebra $\QZ_k(n): = \End_{\S_n} (\Rs_n^{\ot k})$,
 where $\Rs_n= \S_n^{[n-1,1]}$, and proved that there is a variant of the partition algebra, that they termed the 
 \emph{quasi-partition algebra} and denoted $\mathsf{QP}_k(n)$. They  exhibited an algebra homomorphism $\mathsf{QP}_k(n) \rightarrow \End_{\S_n} (\Rs_n^{\ot k})$ and  showed that this mapping is always a surjection and is an isomorphism when $n \ge 2k$.    The irreducible modules 
 $\QZ_{k,n}^\lam$ for
 $\QZ_k(n)$ are indexed by partitions $\lam = [\lam_1, \lam_2, \dots, \lam_n]  \vdash n$.   
 
 In this last section, we determine a formula for the 
 dimensions of these irreducible modules.    The dimension expression we obtain holds
 for arbitrary values of $k$ and $n$ and differs from that in \cite[Thm.~4.6]{DO}, which is valid for  $n > k+\lam_2$,  and is more closely
 related to the one in \cite[Cor.~2.2]{D}, which holds for $n \ge  k+\lam_2$.  
We also extend these results to the case of the corresponding centralizer algebra of the alternating group:
   $\widehat\QZ_k(n): = \End_{\A_n} (\Rs_n^{\ot k})$.
  
 We adopt the  notation in Table \ref{table2} for various centralizer algebras and their irreducible modules
 associated with $\Rs_n^{\ot k}$.    In this table,  for all $k \in \ZZ_{\ge 0}$,   $\mathsf{q}\Lambda_{k,\S_n}$  (resp. $\mathsf{q} \Lambda_{k,\A_n}$)  is
the set of indices for the irreducible $\QZ_k(n)$-summands   
(resp. $\widehat \QZ_k(n)$-summands) in $\Rs_n^{\ot k}$ with multiplicity at least one;  similarly      
  $\mathsf{q}\Lambda_{k,\S_{n-1}}$  (resp. $\mathsf{q}\Lambda_{k,\A_{n-1}}$)  is
the set of indices for the irreducible $\QZ_{k+\half}(n)$-summands   
(resp. $\widehat \QZ_{k+\half}(n)$-summands) in $\Rs_n^{\ot k}$ with multiplicity at least one.

\begin{table}[h!]
\centering
\begin{tabular}[t]{|c|c|}
\hline
        centralizer algebra  &  irreducible modules \\ \hline \hline  
 $\QZ_k(n): = \End_{\S_n} (\Rs_n^{\ot k})\phantom{\bigg\vert} \!\!$  &  $\QZ_{k,n}^\lam, \ \  \lam \vdash n$, \ \  $\lam \in \mathsf{q}\Lambda_{k,\S_n} \subseteq \Lambda_{\S_n}$  \\ \hline
 $\QZ_{k+\half}(n): = \End_{\S_{n-1}} (\Rs_n^{\ot k})\phantom{\bigg\vert} \!\!$  &  $\QZ_{k+\half,n}^\mu, \ \  \mu \vdash n-1$ \ \  $\mu \in \mathsf{q}\Lambda_{k+\half,\S_{n-1}} \subseteq \Lambda_{\S_{n-1}}$ \\ \hline 
  $\widehat{\QZ}_k(n): = \End_{\A_n} (\Rs_n^{\ot k})\phantom{\bigg\vert} \!\!$  &  $\widehat\QZ_{k,n}^\lam, \ \  \lam \vdash n$, \ $\lam > \lam^{\ast}$,
  \ \ $\lam \in \mathsf{q}\Lambda_{k,\A_n} \subseteq \Lambda_{\A_n}$  \\  
  & $\widehat \QZ_{k,n}^{\lam^\pm}, \ \  \lam \vdash n$, \ $\lam = \lam^{\ast}$,   \ \ $\lam \in \mathsf{q}\Lambda_{k,\A_n} \subseteq \Lambda_{\A_n}$   \\  \hline
  $\widehat{\QZ}_{k+\half}(n): = \End_{\A_{n-1}} (\Rs_n^{\ot k})\phantom{\bigg\vert} \!\!$  &  $\widehat\QZ_{k+\half,n}^\mu, \ \  \mu \vdash n-1$, \ $\mu> \mu^{\ast}$,
    \ \ 
    \!\!\!$\mu \in \mathsf{q}\Lambda_{k+\half,\A_{n-1}} \subseteq \Lambda_{\A_{n-1}}\!\!$    \\  
  & $\widehat \QZ_{k+\half,n}^{\mu^\pm}, \ \  \mu \vdash n$, \ $\mu = \mu^{\ast}$, \ \ \phantom{\bigg\vert} $\mu \in \mathsf{q}\Lambda_{k+\half,\A_{n-1}} \subseteq \Lambda_{\A_{n-1}}$  \\  \hline
	\end{tabular}
	\caption{Notation for the centralizer algebras and modules associated with the tensor  product $\mathsf{R}_n^{\ot k}$ of the reflection module $\mathsf{R}_n = \mathsf{S}_n^{[n-1,1]}$ of $\S_n$ and its restriction to $\mathsf{S}_{n-1}, \A_n,$ and
	$\mathsf{A}_{n-1}$. \label{table2}}
	\end{table} 
 
The permutation module $\mathsf{M}_n$ of the symmetric group satisfies $\mathsf{M}_n \cong  \mathsf{R}_n \oplus  \S_n^{[n]}$, where $\Rs_n= \S_n^{[n-1,1]}$ is the $(n-1)$-dimensional reflection representation of $\S_n$ and $ \S_n^{[n]}$ is the trivial module. Applying Proposition \ref{prop:QuasiBinomial} (a) and Theorem \ref{T:Ardims} gives the following:
\newpage 
 
\begin{thm}\label{T:QPcent} Let $k,n \in \ZZ_{\ge 0}$ with $ n \ge 1$. The dimensions of the irreducible modules for $\QZ_k(n)$,  $\QZ_{k+\half}(n)$,
$\widehat\QZ_k(n)$ and  $\widehat\QZ_{k+\half}(n)$ are as follows (using notation from Tables \ref{table1} and \ref{table2}).
 
\begin{itemize}
 \item [{\rm (a)}] For $\lam \vdash n$, $\lam \in \mathsf{q}\Lambda_{k,\S_n}$,  \hskip.6in
$\displaystyle{ \dimm \QZ_{k,n}^\lam =\sum_{\ell=0}^k (-1)^{k-\ell} {k \choose \ell}  \dimm \Z_{\ell,n}^\lam}.$

\item [{\rm (b)}] For $\mu \vdash n-1$, $\mu \in \mathsf{q}\Lambda_{k, \S_{n-1}}$,  \hskip.2in
$\displaystyle{\dimm \QZ_{k+\half,n}^\mu  =  \sum_{\ell=0}^k  (-1)^{k-\ell} {k \choose \ell}  \dimm \Z_{\ell+\half,n}^\mu}.$
 
\item[{\rm (c)}] For $\lam \vdash n$ with  $\lam \in \mathsf{q}\Lambda_{k, \A_n}$,  \quad
\begin{equation*}
\hskip-.2in
\begin{array}{ll}
\displaystyle{\dimm \widehat \QZ_{k,n}^{\lam} 
= \sum_{\ell=0}^k (-1)^{k-\ell} {k \choose \ell} \dimm \WZ_{\ell,n}^{\lam}
= \sum_{\ell=0}^k (-1)^{k-\ell} {k \choose \ell}\!\!\left( \dimm \Z_{\ell,n}^{\lam} + \dimm \Z_{\ell,n}^{\lam^\ast} \right),} 
& \text{if $\lam > \lam^{\ast},$}\\
\displaystyle{\dimm \widehat \QZ_{k,n}^{\lam^\pm} 
= \sum_{\ell=0}^k (-1)^{k-\ell} {k \choose \ell} \dimm \WZ_{\ell,n}^{\lam^\pm}
= \sum_{\ell=0}^k (-1)^{k-\ell} {k \choose \ell} \dimm \Z_{\ell,n}^{\lam} = \dimm \QZ_{k,n}^{\lam},} & \text{if $\lam = \lam^{\ast}.$}\\
\end{array}
\end{equation*}

 \item[{\rm (d)}] For $\mu \vdash n-1$ with $\mu \in \mathsf{q}\Lambda_{k+\half, \A_{n-1}}$, 
 \begin{equation*}
\begin{array}{llll}
\dimm \widehat \QZ_{k+\half,n}^{\mu} 
&=& \displaystyle{\sum_{\ell=0}^k (-1)^{k-\ell} {k \choose \ell} \dimm \WZ_{\ell+\half,n}^{\mu}}\\
&=& \displaystyle{\sum_{\ell=0}^k (-1)^{k-\ell} {k \choose \ell}\left( \dimm \Z_{\ell+\half,n}^{\mu} + \dimm \Z_{\ell+\half,n}^{\mu^\ast} \right),} 
& \text{if $\mu > \mu^{\ast}$,}\\ 
\\
\dimm \widehat \QZ_{k+\half,n}^{\mu^\pm} 
&=& \displaystyle{\sum_{\ell=0}^k (-1)^{k-\ell} {k \choose \ell} \dimm \WZ_{\ell+\half,n}^{\mu^\pm}} \\
&=& \displaystyle{\sum_{\ell=0}^k (-1)^{k-\ell} {k \choose \ell} \dimm \Z_{\ell+\half,n}^{\mu} = \dimm  \QZ_{k+\half,n}^{\mu},} 
& \text{if $\mu = \mu^{\ast}$.}\\
\end{array}
\end{equation*}
 \end{itemize} 
\end{thm}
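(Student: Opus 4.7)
The plan is to obtain all four parts of Theorem \ref{T:QPcent} as direct consequences of Proposition \ref{prop:QuasiBinomial}(a), with parts (c) and (d) additionally using Theorem \ref{T:Ardims} to translate alternating-group centralizer dimensions back into symmetric-group ones. The key observation that makes Proposition \ref{prop:QuasiBinomial} applicable in all four cases is the decomposition $\modu \cong \S_n^{[n]} \oplus \Rs_n$ of the permutation module. This decomposition is preserved under restriction to any subgroup $\HH \subseteq \S_n$, because the trivial summand $\S_n^{[n]}$ simply restricts to the trivial $\HH$-module $\HH^\bullet$. Hence, for each $\GG \in \{\S_n, \S_{n-1}, \A_n, \A_{n-1}\}$ and $\WW := \Rs_n\vert_\GG$, the module $\VV = \GG^\bullet \oplus \WW$ of Proposition \ref{prop:QuasiBinomial} coincides with $\modu\vert_\GG$.

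For part (a), I would take $\GG = \S_n$ and $\WW = \Rs_n$. In the notation of Tables \ref{table1} and \ref{table2}, $\Z_\ell(\GG) = \Z_\ell(n)$ and $\QZ_k(\GG) = \QZ_k(n)$, so Proposition \ref{prop:QuasiBinomial}(a) yields the identity as stated. For part (b), I would take $\GG = \S_{n-1}$ and $\WW = \Rs_n\vert_{\S_{n-1}}$; then $\Z_\ell(\GG)$ matches $\Z_{\ell+\half}(n)$ and $\QZ_k(\GG)$ matches $\QZ_{k+\half}(n)$ under the same notation, and Proposition \ref{prop:QuasiBinomial}(a) again gives the claim.

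For parts (c) and (d), I would take $\GG = \A_n$ (respectively $\A_{n-1}$) and $\WW = \Rs_n\vert_\GG$. Proposition \ref{prop:QuasiBinomial}(a) supplies the first equality in each of the four displayed identities, expressing $\dimm \widehat\QZ_{k,n}^{\lam}$ (or $\dimm \widehat\QZ_{k,n}^{\lam^\pm}$) as an alternating binomial sum of $\dimm \WZ_{\ell,n}^{\lam}$'s (or $\dimm \WZ_{\ell,n}^{\lam^\pm}$'s). The second equality then follows by substituting the formulas from Theorem \ref{T:Ardims} termwise: $\dimm \WZ_{\ell,n}^\lam = \dimm \Z_{\ell,n}^\lam + \dimm \Z_{\ell,n}^{\lam^\ast}$ when $\lam > \lam^\ast$, and $\dimm \WZ_{\ell,n}^{\lam^\pm} = \dimm \Z_{\ell,n}^\lam$ when $\lam = \lam^\ast$. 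In the self-conjugate case, the resulting binomial sum is exactly the expression from part (a) for the partition $\lam$, yielding the final identity $\dimm \widehat\QZ_{k,n}^{\lam^\pm} = \dimm \QZ_{k,n}^\lam$. The same argument, with $\A_n$ replaced by $\A_{n-1}$ and Theorem \ref{T:Ardims}(a) replaced by (b), handles the half-integer case.

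Since the proof is a chain of substitutions into previously established identities, there is no serious technical obstacle; the only matter requiring care is the initial verification that $\VV = \modu\vert_\GG$ in each of the four settings, together with the conjugate-partition bookkeeping needed to ensure that the self-conjugate case collapses correctly to the formula in part (a).
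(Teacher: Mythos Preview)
Your proposal is correct and follows essentially the same approach as the paper: the paper simply states that $\modu \cong \Rs_n \oplus \S_n^{[n]}$ and then says that Theorem \ref{T:QPcent} follows by ``applying Proposition \ref{prop:QuasiBinomial}(a) and Theorem \ref{T:Ardims}.'' Your write-up expands the details the paper leaves implicit---particularly the verification that $\VV = \modu\vert_\GG$ for each choice of $\GG \in \{\S_n,\S_{n-1},\A_n,\A_{n-1}\}$ and the bookkeeping in the self-conjugate case---but the underlying argument is identical.
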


\noindent Applying Proposition  \ref{prop:QuasiBinomial}(b) and Theorem \ref{T:QPcent} gives the following:

 \begin{cor}\label{cor:QPcent} Let $k,n \in \ZZ_{\ge 0}$ with $n > 0$,  and let the notation be as in Table \ref{table2}.
  \begin{itemize}
 \item [{\rm (a)}] $\displaystyle{
  \dimm \QZ_k(n)  =\dimm \QZ_{2k,n}^{[n]} = \sum_{\ell=0}^{2k} (-1)^{2k-\ell}{2k \choose \ell} \mathsf{B}(\ell,n)}$\\
$\displaystyle{\hspace{1.7cm}  \left(= \sum_{\ell=0}^{2k} (-1)^{2k-\ell}{2k \choose \ell}   \mathsf{B}(\ell) = 1+ \sum_{\ell=1}^{2k} (-1)^{\ell-1} \mathsf{B}(2k-\ell) \quad  \hbox{if  \, \ $n \ge 2k+2$}\right)}.$

\item [{\rm (b)}] $\displaystyle{
\dimm \QZ_{k+\half}(n) \ = \ \dimm \QZ_{2k+\half,n}^{[n-1]} = \sum_{\ell=0}^{2k} (-1)^{2k-\ell}{2k \choose \ell} \mathsf{B}(\ell+1,n)}$\\
$ \displaystyle{ \hspace{2.2cm} \left( = \ \sum_{\ell=0}^{2k} (-1)^{2k-\ell}{2k \choose \ell}   \mathsf{B}(\ell+1)  = \mathsf{B}(2k)  \quad    \hbox{if \ \, $n \geq 2k+1$}\right)}.$

\item[{\rm (c)}] $\displaystyle{\dimm  \widehat \QZ_k(n) =\dimm\widehat \QZ_{2k,n}^{[n]} \ =\   \sum_{\ell=0}^{2k} (-1)^{2k-\ell} {2k \choose \ell} \left( \mathsf{B}(\ell,n)+ 
  \lb{\ell \atop n-1}\rb + \lb{\ell \atop n}\rb\right)}$ \\ 
$\displaystyle{\hspace{1.8cm}  \left(=\  \sum_{\ell=0}^{2k} (-1)^{2k-\ell}{2k \choose \ell}  \mathsf{B}(\ell)  
  = 1+ \sum_{\ell=1}^{2k} (-1)^{\ell-1} \mathsf{B}(2k-\ell) \quad  \hbox{ if \ \, $n \ge 2k+2$}\right).}$

 \item[{\rm (d)}]  $\displaystyle{\dimm  \widehat \QZ_{k+\half}(n)= \ \dimm\widehat \QZ_{2k+\half,n}^{[n-1]}}  $\\
$\displaystyle{ \hspace{2.4cm}     =\   \sum_{\ell=0}^{2k} (-1)^{2k-\ell} {2k \choose \ell} \left( \mathsf{B}(\ell+1,n)+ 
  \lb{\ell +1 \atop n-1}\rb + \lb{\ell+1 \atop n}\rb\right)}$ \\ 
$\displaystyle{ \hspace{2.1cm}   \left(= \  \sum_{\ell=0}^{2k} (-1)^{2k-\ell}{2k \choose \ell}  \mathsf{B}(\ell+1)  = \mathsf{B}(2k) 
\quad  \hbox{  if \ \, $n \ge 2k+2$}\right).}$
  \end{itemize} 
\end{cor}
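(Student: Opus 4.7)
The proof of Corollary \ref{cor:QPcent} is essentially a bookkeeping exercise: it combines Proposition \ref{prop:QuasiBinomial}(b) with the dimension formulas for the $\S_n$- and $\A_n$-centralizer algebras from Theorems \ref{T:cent}, \ref{T:Ardims} and Corollary \ref{C:Ardims}, together with a single Bell-number identity. I will treat the four parts uniformly.

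For part (a), the key observation is that $\Rs_n = \S_n^{[n-1,1]}$ is self-dual (every irreducible $\CC\S_n$-module is self-dual), and $\mathsf{M}_n = \S_n^{[n]} \oplus \Rs_n$. Hence Proposition \ref{prop:QuasiBinomial}(b), applied with $\GG=\S_n$, $\VV=\mathsf{M}_n$, and $\WW=\Rs_n$, yields $\dimm\QZ_k(n) = \dimm\QZ_{2k,n}^{[n]} = \sum_{\ell=0}^{2k}(-1)^{2k-\ell}\binom{2k}{\ell}\dimm\Z_{\ell,n}^{[n]}$. Substituting $\dimm\Z_{\ell,n}^{[n]} = \mathsf{B}(\ell,n)$ from Theorem \ref{T:cent}(c) gives the first equality. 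When $n \ge 2k+2$, every $\ell \le 2k$ satisfies $\ell < n$, so $\mathsf{B}(\ell,n) = \mathsf{B}(\ell)$, producing the middle form. The rewriting as $1+\sum_{\ell=1}^{2k}(-1)^{\ell-1}\mathsf{B}(2k-\ell)$ is a combinatorial identity valid for even upper index, provable from the standard Bell recurrence $\mathsf{B}(m+1) = \sum_{j=0}^m \binom{m}{j}\mathsf{B}(j)$ (for instance, by induction on $k$).

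Part (b) is parallel, but performed over $\S_{n-1}$. As an $\S_{n-1}$-module, $\mathsf{M}_n \cong \S_{n-1}^{[n-1]} \oplus \Rs_n|_{\S_{n-1}}$, and $\Rs_n|_{\S_{n-1}}$ is self-dual. Proposition \ref{prop:QuasiBinomial}(b) therefore gives $\dimm\QZ_{k+\half}(n) = \dimm\QZ_{2k+\half,n}^{[n-1]} = \sum_{\ell=0}^{2k}(-1)^{2k-\ell}\binom{2k}{\ell}\dimm\Z_{\ell+\half,n}^{[n-1]}$, and plugging in $\dimm\Z_{\ell+\half,n}^{[n-1]} = \mathsf{B}(\ell+1,n)$ from Theorem \ref{T:cent}(d) yields the displayed expression. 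For $n \ge 2k+1$, $\mathsf{B}(\ell+1,n) = \mathsf{B}(\ell+1)$ throughout the sum, and the final simplification to $\mathsf{B}(2k)$ is the inverse binomial transform identity $\mathsf{B}(m) = \sum_{\ell=0}^m(-1)^{m-\ell}\binom{m}{\ell}\mathsf{B}(\ell+1)$, obtained by inverting the Bell recurrence.

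For parts (c) and (d), the same argument is run over the alternating group. Since $[n-1,1]^{\ast}=[2,1^{n-2}]\neq [n-1,1]$ for $n\geq 3$, $\Rs_n|_{\A_n}$ remains irreducible and self-dual, and $\mathsf{M}_n|_{\A_n} = \A_n^{[n]} \oplus \Rs_n|_{\A_n}$. Proposition \ref{prop:QuasiBinomial}(b) then produces $\dimm\widehat\QZ_k(n) = \sum_{\ell=0}^{2k}(-1)^{2k-\ell}\binom{2k}{\ell}\dimm\widehat\Z_{\ell,n}^{[n]}$, and Corollary \ref{C:Ardims}(a) supplies $\dimm\widehat\Z_{\ell,n}^{[n]} = \mathsf{B}(\ell,n) + \lb{\ell\atop n-1}\rb + \lb{\ell\atop n}\rb$, giving the first equality. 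For $n \ge 2k+2$, both Stirling terms vanish for $\ell \le 2k < n-1$, so the formula collapses to that of part (a). Part (d) is identical in structure, substituting Corollary \ref{C:Ardims}(c) and invoking the same inverse Bell recurrence as in part (b).

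The only step requiring care is the Bell-number identity in the closed form for parts (a) and (c); everything else is formal substitution from results already established. The main ``obstacle,'' if there is one, is simply to confirm that Proposition \ref{prop:QuasiBinomial}(b) applies in each of the four settings, which reduces to checking the decomposition $\VV = \mathbf{1}\oplus \WW$ and the self-duality of $\WW$ in that setting.
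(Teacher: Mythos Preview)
Your proof is correct and follows essentially the same route as the paper: apply Proposition \ref{prop:QuasiBinomial}(b) in each of the four settings and substitute the known dimensions of the trivial-module component from \eqref{eq:trivial}, \eqref{eq:halftriv}, and Corollary \ref{C:Ardims}. The only cosmetic difference is in justifying the Bell-number identity for parts (a) and (c): the paper argues via the sequence $\upsilon_\ell$ of set partitions with no singleton blocks (using $\upsilon_\ell+\upsilon_{\ell+1}=\mathsf{B}(\ell)$ and a telescoping sum, with citations to \cite{Be} and \cite{SW}), whereas you appeal to induction on the Bell recurrence; both are valid.
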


 \begin{proof}    The first two equalities in parts (a)-(d) of the corollary follow from Proposition \ref{prop:QuasiBinomial}\,(b), \eqref{eq:trivial}, \eqref{eq:halftriv}, 
 and Corollary \ref{C:Ardims}\,(b) and (d).
The final equality in (a) and (c) can be seen as follows:      Let  $\upsilon_\ell$ be the number
of set partitions of $\{1,\dots, \ell\}$, with no blocks of size 1.      Then, as shown in \cite[Sec.~3.5]{Be},
$\upsilon_{\ell} + \upsilon_{\ell+1} = \mathsf{B}(\ell)$, (the $\ell$th Bell number).    Now  \cite[Sec.~1]{SW} implies that
$\upsilon_{2k} = \sum_{\ell=0}^{2k}(-1)^{2k-\ell}{2k \choose {  \ell}} \mathsf{B}(\ell)$.    However, substituting the expression 
$\upsilon_{\ell} + \upsilon_{\ell+1}$  for $\mathsf{B}(\ell)$ shows that 
the telescoping sum $1+ \sum_{\ell=1}^{2k} (-1)^{ \ell-1} \mathsf{B}(2k-\ell) = \upsilon_{2k}$
also.   Hence, the two expressions for $\upsilon_{2k}$  equal.  The final equality in parts  (b) and (d) is a well-known property of Bell numbers (see for example \cite[(1.2)]{SW}).   \end{proof}

\begin{remark}  The result from Corollary \ref{cor:QPcent}  (a) that  $\dimm \QZ_k(n) =  1+\sum_{\ell=1}^{2k} (-1)^{\ell-1} \mathsf{B}(2k-\ell)$ when $n \ge 2k+2$ was shown in \cite[Cor.~2.6]{DO}.      As noted there,  the sequence $\{\upsilon_\ell\}$ is  \#A000296
in  \cite{OEIS}.   \end{remark}    

\noindent \ \ \ The results in Theorem \ref{T:QPcent} enable us to conclude the following for generic quasi-partition algebras.

\begin{cor} \label{C:QPdims}   Let  $\nu$ be a partition with $0 \leq \vert \nu \vert \leq k$.   For $\para \not \in 
\{0,1,\ldots, 2k-1\}$,  let  $\QP_{k,\para}^\nu$ denote the irreducible $\QP_k(\para)$-module.   Then
\begin{eqnarray*}
\dimm \QP_{k,\para}^\nu &=& f^{\nu} \sum_{\ell=0}^k (-1)^{k-\ell}{k \choose \ell} \left(\sum_{t =\vert \nu\vert }^{\ell} {t \choose {\vert \nu\vert}} \lb{\ell \atop t}\rb\right) \qquad  \big(= f^\nu \ \ \hbox{when} \ \
 |\nu| = k\big).
\end{eqnarray*} \end{cor}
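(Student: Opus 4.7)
My plan is to mimic the derivation of Corollary \ref{C:partdims} but applied to the quasi-partition setting, leveraging Theorem \ref{T:QPcent}(a) and Proposition \ref{P:nuprop}(a). The key observation is that $\QP_k(\para)$ is generically semisimple for $\para \notin \{0,1,\dots,2k-1\}$, and, as for the partition algebra, the dimensions of its irreducible modules $\QP_{k,\para}^\nu$ are constant on the set of generic parameters. Hence it suffices to compute them at a single sufficiently large integer value $\para = n$.

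First I would fix $n$ with $n \ge 2k$, so that the surjection $\QP_k(n) \to \QZ_k(n) = \End_{\S_n}(\Rs_n^{\otimes k})$ of \cite{DO} is an isomorphism. In this regime, the irreducible $\QP_k(n)$-module labelled by $\nu$ (with $0 \le |\nu| \le k$) is identified with the irreducible $\QZ_k(n)$-module $\QZ_{k,n}^{[n-|\nu|,\nu]}$, where $[n-|\nu|,\nu] := [n-|\nu|,\nu_1,\dots,\nu_{\ell(\nu)}]$ is a partition of $n$ because $n - |\nu| \ge 2k - k = k \ge \nu_1$. Theorem \ref{T:QPcent}(a) then yields
\[
\dimm \QP_{k,\para}^\nu \;=\; \dimm \QZ_{k,n}^{[n-|\nu|,\nu]} \;=\; \sum_{\ell=0}^k (-1)^{k-\ell}\binom{k}{\ell} \dimm \Z_{\ell,n}^{[n-|\nu|,\nu]}.
\]
Since $\ell \le k$ implies $2\ell \le n$, Proposition \ref{P:nuprop}(a) applies to each summand and gives $\dimm \Z_{\ell,n}^{[n-|\nu|,\nu]} = f^\nu \sum_{t=|\nu|}^{\ell} \binom{t}{|\nu|}\lb{\ell \atop t}\rb$. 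Substituting and pulling out the factor $f^\nu$ produces the stated formula.

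For the parenthetical claim that the expression reduces to $f^\nu$ when $|\nu|=k$, I would note that the inner sum $\sum_{t=k}^{\ell}\binom{t}{k}\lb{\ell \atop t}\rb$ is empty unless $\ell \ge k$; since $\ell$ ranges over $\{0,1,\dots,k\}$, only $\ell=k$ contributes, and that contribution is $\binom{k}{k}\lb{k \atop k}\rb = 1$. The outer alternating sum then collapses to $(-1)^0 \binom{k}{k} f^\nu = f^\nu$, matching the special case of Proposition \ref{P:nuprop}(a).

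I do not anticipate a major obstacle: the result is essentially an assembly of Theorem \ref{T:QPcent}(a) with Proposition \ref{P:nuprop}(a), and the only subtlety is the generic-parameter argument. The mildly delicate point is justifying that the dimension of $\QP_{k,\para}^\nu$ depends only on $\nu$ (and not on the generic $\para$), so that the evaluation at $\para = n \ge 2k$ computes the right quantity; this follows from the standard fact that for a generically semisimple family of finite-dimensional algebras, the dimensions of the irreducible modules are locally constant in the parameter, together with the fact that the labelling set $\{\nu : 0 \le |\nu| \le k\}$ for $\QP_k(\para)$ is independent of $\para$, as established in \cite{DO}.
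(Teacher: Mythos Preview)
Your argument is correct and matches the paper's intended route: the corollary is stated immediately after the sentence ``The results in Theorem \ref{T:QPcent} enable us to conclude the following for generic quasi-partition algebras,'' and your proof spells out exactly that deduction, combining Theorem \ref{T:QPcent}(a) with Proposition \ref{P:nuprop}(a) in the same way Corollary \ref{C:partdims} was obtained from Theorem \ref{T:cent}(a). The only small point worth making explicit is that for $\ell < |\nu|$ the hypothesis $|\nu| \le \ell$ of Proposition \ref{P:nuprop}(a) is not literally satisfied, but in that range $\dimm \Z_{\ell,n}^{[n-|\nu|,\nu]} = 0$ (since $[n-|\nu|,\nu] \notin \Lambda_{\ell,\S_n}$) and the inner sum is empty, so the identity holds trivially; you already observe this in the $|\nu|=k$ case.
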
 

The Bratteli diagrams
constructed using the reflection  module $\Rs_n$  for the pairs $(\S_n,\S_{n-1})$, $(\A_n, \A_{n-1})$ for $n=6$
are displayed in \ref{sec:QBratteliS6S5} and \ref{sec:QBratteliA6A5} of the Appendix.   The subscript on a partition  at level $\ell \in \half \ZZ_{\ge 0}$ is the dimension
of the irreducible module for the centralizer algebra $\QZ_\ell(6)$.   For $k \in \ZZ_{\ge 0}$,    $\Ind_{\S_{n-1}}^{\S_n}\Res_{\S_{n-1}}^{\S_n}(\Rs_n^{\ot k})$ is isomorphic as
an $\S_n$-module to $\Rs_n^{\ot k} \oplus \Rs_n^{\ot (k+1)}$.    This implies that the subscripts on level $k +\half$ are gotten
from  level $k$ by Pascal addition;  however, the 
 subscripts  on level $k+1$ are obtained by 
 first performing Pascal addition from level  $k+\half$ and then subtracting the corresponding subscript  
from level $k$. 
\medskip


 \newpage  
 
  \appendix
  \section{Appendix: Bratteli Diagrams} 
  
   \subsection{\bf Levels  \ $\ell = 0,\half,1,\ldots,\frac{7}{2},4$ \, of the Bratteli diagram $\mathcal{B}(\S_6,\S_5)$} \label{sec:BratteliS6S5}

\Yboxdim{6pt}
\Ylinethick{.6pt}
\noindent  

\noindent Level  $\ell = \frac{7}{2}$ is  the first time the centralizer algebra loses  a dimension from 
the generic dimension, which is the 7th Bell number   $\mathsf{B}(7) =  877$.
\bigskip

$$
\begin{tikzpicture}[line width=.5pt,xscale=0.16,yscale=0.30]
\path (-10,0)  node[anchor=west]  {$\bsym{\ell=0}$};
\path (0,0)  node[anchor=west] (S6-0) {\yng(6)};
\draw (S6-0) node[below=4pt, black] {\nmm 1};
\path (-10,-5)  node[anchor=west]  {$\bsym{\ell=\frac{1}{2}}$};
\path (6,-5)  node[anchor=west] (S5-1) {\yng(5)};
\draw (S5-1) node[below=4pt,black] {\nmm 1};
\path (-10,-10)  node[anchor=west]  {$\bsym{\ell=1}$};
\path (0,-10)  node[anchor=west] (S6-2) {\yng(6)};
\path (12,-10)  node[anchor=west] (S51-2) {\yng(5,1)};
\draw (S6-2) node[below=8pt,right=-7pt,black] {\nmm 1};
\draw (S51-2) node[below=8pt, right=-6pt,black] {\nmm 1};
\path (-10,-15)  node[anchor=west]  {$\bsym{\ell=\frac{3}{2}}$};
\path (6,-15)  node[anchor=west] (S5-3) {\yng(5)};
\path (19,-15)  node[anchor=west] (S41-3) {\yng(4,1)};
\draw (S5-3) node[below=8pt, right=-7pt,black] {\nmm 2};
\draw (S41-3) node[below=8pt, right=-6pt,black] {\nmm 1};
\path (-10,-20)  node[anchor=west]  {$\bsym{\ell=2}$};
\path (0,-20)  node[anchor=west] (S6-4) {\yng(6)};
\path (12,-20)  node[anchor=west] (S51-4) {\yng(5,1)};
\path (23,-20)  node[anchor=west] (S42-4) {\yng(4,2)};
\path (33,-20)  node[anchor=west] (S411-4) {\yng(4,1,1)};
\draw (S6-4)    node[below=8pt, right=-7pt,black]{\nmm 2};
\draw (S51-4)  node[below=8pt, right=-7pt,black] {\nmm 3};
\draw (S42-4)  node[below=8pt, right=-3pt,black] {\nmm 1};
\draw (S411-4) node[below=8pt, right=-6pt,black] {\nmm 1};
\path (-10,-25)  node[anchor=west]  {$\bsym{\ell=\frac{5}{2}}$};
\path (6,-25)  node[anchor=west] (S5-5) {\yng(5)};
\path (19,-25)  node[anchor=west] (S41-5) {\yng(4,1)};
\path (32,-25)  node[anchor=west] (S32-5) {\yng(3,2)};
\path (44,-25)  node[anchor=west] (S311-5) {\yng(3,1,1)};
\draw (S5-5) node[below=8pt, right=-7pt,black]{\nmm 5};
\draw (S41-5) node[below=8pt, right=-7pt,black] {\nmm 5};
\draw (S32-5) node[below=8pt, right=0pt,black] {\nmm 1};
\draw (S311-5) node[below=8pt, right=-5pt,black] {\nmm 1};
\path (-10,-30)  node[anchor=west]  {$\bsym{\ell=3}$};
\path (0,-30)  node[anchor=west] (S6-6) {\yng(6)};
\path (12,-30)  node[anchor=west] (S51-6) {\yng(5,1)};
\path (23,-30)  node[anchor=west] (S42-6) {\yng(4,2)};
\path (33,-30)  node[anchor=west] (S411-6) {\yng(4,1,1)};
\path (42,-30)  node[anchor=west] (S33-6) {\yng(3,3)};
\path (49,-30)  node[anchor=west] (S321-6) {\yng(3,2,1)};
\path (56,-30)  node[anchor=west] (S3111-6) {\yng(3,1,1,1)};
\draw (S6-6)      node[below=8pt, right=-7pt,black]{\nmm 5};
\draw (S51-6)    node[below=8pt, right=-10pt,black] {\nmm 10};
\draw (S42-6)    node[below=8pt, right=-4pt,black] {\nmm 6};
\draw (S411-6)   node[below=8pt, right=-7pt,black]  {\nmm 6};
\draw (S33-6) node[below=12pt, right=-9pt,black] {\nmm 1};
\draw (S321-6) node[below=12pt, right=-5pt,black] {\nmm 2};
\draw (S3111-6) node[below=8pt, right=-5pt,black] {\nmm 1};
\path (-10,-35)  node[anchor=west]  {$\bsym{\ell=\frac{7}{2}}$};
\path (6,-35)  node[anchor=west] (S5-7) {\yng(5)};
\path (19,-35)  node[anchor=west] (S41-7) {\yng(4,1)};
\path (32,-35)  node[anchor=west] (S32-7) {\yng(3,2)};
\path (44,-35)  node[anchor=west] (S311-7) {\yng(3,1,1)};
\path (56,-35)  node[anchor=west] (S221-7) {\yng(2,2,1)};
\path (64,-35)  node[anchor=west] (S2111-7) {\yng(2,1,1,1)};
\draw (S5-7)  node[below=8pt, right=2pt,black] {\nmm 15};
\draw (S41-7)  node[below=6pt, right=-7pt,black] {\nmm 22};
\draw (S32-7)  node[below=12pt, right=-7pt,black] {\nmm 9};
\draw (S311-7)  node[below=8pt, right=-4pt,black] {\nmm 9};
\draw (S221-7)  node[below=15pt, right=-6pt,black]{\nmm 2};
\draw (S2111-7)  node[below=8pt, right=0pt,black]{\nmm 1};
\path (-10,-40)  node[anchor=west]  {$\bsym{\ell=4}$};
\path (0,-40)  node[anchor=west] (S6-8) {\yng(6)};
\path (12,-40)  node[anchor=west] (S51-8) {\yng(5,1)};
\path (23,-40)  node[anchor=west] (S42-8) {\yng(4,2)};
\path (33,-40)  node[anchor=west] (S411-8) {\yng(4,1,1)};
\path (42,-40)  node[anchor=west] (S33-8) {\yng(3,3)};
\path (49,-40)  node[anchor=west] (S321-8) {\yng(3,2,1)};
\path (56,-40)  node[anchor=west] (S3111-8) {\yng(3,1,1,1)};
\path (63,-40)  node[anchor=west] (S222-8) {\yng(2,2,2)};
\path (69,-40)  node[anchor=west] (S2211-8) {\yng(2,2,1,1)};
\path (75,-40)  node[anchor=west] (S21111-8) {\yng(2,1,1,1,1)};
\draw (S6-8)          node[below=12pt, right=-4pt,black]  {\nmm 15};
\draw (S51-8)        node[below=12pt, right=-4pt,black] {\nmm 37};
\draw (S42-8)        node[below=12pt, right=-4pt,black] {\nmm 31};
\draw (S411-8)      node[below=12pt, right=-4pt,black] {\nmm 31};
\draw (S33-8)        node[below=12pt, right=-4pt,black] {\nmm 9};
\draw (S321-8)      node[below=12pt, right=-4pt,black] {\nmm 20};
\draw (S3111-8)    node[below=12pt, right=-4pt,black] {\nmm 10};
\draw (S222-8)     node[below=12pt, right=3pt,black] {\nmm 2};
\draw (S2211-8)    node[below=12pt, right=-2pt,black] {\nmm 3};
\draw (S21111-8)  node[below=12pt, right=-2pt,black] {\nmm 1};
\draw (95,0) node[anchor=east,black] {$~~{\mathbf{1}}$};
\draw (95,-5) node[anchor=east,black] {$~~{\mathbf{1}}$};
\draw (95,-10) node[anchor=east,black] {$~~{\mathbf{2}}$};
\draw (95,-15) node[anchor=east,black] {$~~{\mathbf{5}}$};
\draw (95,-20) node[anchor=east,black] {$~~{\mathbf{15}}$};
\draw (95,-25) node[anchor=east,black] {$~~{\mathbf{52}}$};
\draw (95,-30) node[anchor=east,black] {$~~{\mathbf{203}}$};
\draw (95,-35) node[anchor=east,black] {$~~{\mathbf{876}}$};
\draw (95,-40) node[anchor=east,black] {$~~{\mathbf{4111}}$};

\path  (S6-0) edge[black,thick] (S5-1);
\path  (S6-2) edge[black,thick] (S5-1);
\path  (S51-2) edge[black,thick] (S5-1);
\path  (S6-2) edge[black,thick] (S5-3);
\path  (S51-2) edge[black,thick] (S5-3);
\path  (S51-2) edge[black,thick] (S41-3);
\path  (S6-4) edge[black,thick] (S5-3);
\path  (S51-4) edge[black,thick] (S5-3);
\path  (S51-4) edge[black,thick] (S41-3);
\path  (S42-4) edge[black,thick] (S41-3);
\path  (S411-4) edge[black,thick] (S41-3);
\path  (S6-4) edge[black,thick] (S5-5);
\path  (S51-4) edge[black,thick] (S5-5);
\path  (S51-4) edge[black,thick] (S41-5);
\path  (S42-4) edge[black,thick] (S41-5);
\path  (S42-4) edge[black,thick] (S32-5);
\path  (S411-4) edge[black,thick] (S41-5);
\path  (S411-4) edge[black,thick] (S311-5);
\path  (S6-6) edge[black,thick] (S5-5);
\path  (S51-6) edge[black,thick] (S5-5);
\path  (S51-6) edge[black,thick] (S41-5);
\path  (S42-6) edge[black,thick] (S41-5);
\path  (S42-6) edge[black,thick] (S32-5);
\path  (S411-6) edge[black,thick] (S41-5);
\path  (S411-6) edge[black,thick] (S311-5);
\path  (S33-6) edge[black,thick] (S32-5);
\path  (S321-6) edge[black,thick] (S32-5);
\path  (S321-6) edge[black,thick] (S311-5);
\path  (S3111-6) edge[black,thick] (S311-5);
\path  (S6-6) edge[black,thick] (S5-7);
\path  (S51-6) edge[black,thick] (S5-7);
\path  (S51-6) edge[black,thick] (S41-7);
\path  (S42-6) edge[black,thick] (S41-7);
\path  (S42-6) edge[black,thick] (S32-7);
\path  (S33-6) edge[black,thick] (S32-7);
\path  (S321-6) edge[black,thick] (S32-7);
\path  (S321-6) edge[black,thick] (S311-7);
\path  (S321-6) edge[black,thick] (S221-7);
\path  (S411-6) edge[black,thick] (S41-7);
\path  (S411-6) edge[black,thick] (S311-7);
\path  (S3111-6) edge[black,thick] (S311-7);
\path  (S3111-6) edge[black,thick] (S2111-7);
\path  (S6-8) edge[black,thick] (S5-7);
\path  (S51-8) edge[black,thick] (S5-7);
\path  (S51-8) edge[black,thick] (S41-7);
\path  (S42-8) edge[black,thick] (S41-7);
\path  (S411-8) edge[black,thick] (S41-7);
\path  (S42-8) edge[black,thick] (S32-7);
\path  (S33-8) edge[black,thick] (S32-7);
\path  (S321-8) edge[black,thick] (S32-7);
\path  (S411-8) edge[black,thick] (S311-7);
\path  (S321-8) edge[black,thick] (S311-7);
\path  (S321-8) edge[black,thick] (S221-7);
\path  (S222-8) edge[black,thick] (S221-7);
\path  (S2211-8) edge[black,thick] (S221-7);
\path  (S411-8) edge[black,thick] (S311-7);
\path  (S3111-8) edge[black,thick] (S311-7);
\path  (S3111-8) edge[black,thick] (S2111-7);
\path  (S2211-8) edge[black,thick] (S2111-7);
\path  (S21111-8) edge[black,thick] (S2111-7);
\end{tikzpicture}
$$
\newpage  

 \subsection{Levels  $\ell = 0,\half,1,\ldots,\frac{7}{2},4$ of the quasi-Bratteli diagram
$ \mathcal{QB}(\S_6,\S_5)$}  \label{sec:QBratteliS6S5}
 
\Yboxdim{6pt}
\Ylinethick{.6pt} 
\bigskip

To calculate the subscripts on the half-integer rows, use Pascal addition of the subscripts from the row above. To calculate the subscripts on integer level rows, first use Pascal addition from the row above,  and then subtract the subscript on the same partition from two rows above.
\medskip
  
$$
\begin{tikzpicture}[line width=.5pt,xscale=0.14,yscale=0.30]
\path (-10,0)  node[anchor=west]  {$\bsym{\ell=0}$};
\path (0,0)  node[anchor=west] (S6-0) {\yng(6)};
\draw (S6-0) node[below=4pt, black] {\nmm 1};
\path (-10,-5)  node[anchor=west]  {$\bsym{\ell=\frac{1}{2}}$};
\path (6,-5)  node[anchor=west] (S5-1) {\yng(5)};
\draw (S5-1) node[below=4pt,black] {\nmm 1};
\path (-10,-10)  node[anchor=west]  {$\bsym{\ell=1}$};
\path (0,-10)  node[anchor=west] (S6-2) {\yng(6)};
\path (12,-10)  node[anchor=west] (S51-2) {\yng(5,1)};
\draw (S6-2) node[below=8pt,right=-7pt,black] {\nmm 0};
\draw (S51-2) node[below=8pt, right=-6pt,black] {\nmm 1};
\path (-10,-15)  node[anchor=west]  {$\bsym{\ell=\frac{3}{2}}$};
\path (6,-15)  node[anchor=west] (S5-3) {\yng(5)};
\path (19,-15)  node[anchor=west] (S41-3) {\yng(4,1)};
\draw (S5-3) node[below=8pt, right=-7pt,black] {\nmm 1};
\draw (S41-3) node[below=8pt, right=-6pt,black] {\nmm 1};
\path (-10,-20)  node[anchor=west]  {$\bsym{\ell=2}$};
\path (0,-20)  node[anchor=west] (S6-4) {\yng(6)};
\path (12,-20)  node[anchor=west] (S51-4) {\yng(5,1)};
\path (23,-20)  node[anchor=west] (S42-4) {\yng(4,2)};
\path (33,-20)  node[anchor=west] (S411-4) {\yng(4,1,1)};
\draw (S6-4)    node[below=8pt, right=-7pt,black]{\nmm 1};
\draw (S51-4)  node[below=8pt, right=-7pt,black] {\nmm 1};
\draw (S42-4)  node[below=8pt, right=-3pt,black] {\nmm 1};
\draw (S411-4) node[below=8pt, right=-6pt,black] {\nmm 1};
\path (-10,-25)  node[anchor=west]  {$\bsym{\ell=\frac{5}{2}}$};
\path (6,-25)  node[anchor=west] (S5-5) {\yng(5)};
\path (19,-25)  node[anchor=west] (S41-5) {\yng(4,1)};
\path (32,-25)  node[anchor=west] (S32-5) {\yng(3,2)};
\path (44,-25)  node[anchor=west] (S311-5) {\yng(3,1,1)};
\draw (S5-5) node[below=8pt, right=-7pt,black]{\nmm 2};
\draw (S41-5) node[below=8pt, right=-7pt,black] {\nmm 3};
\draw (S32-5) node[below=8pt, right=0pt,black] {\nmm 1};
\draw (S311-5) node[below=8pt, right=-5pt,black] {\nmm 1};
\path (-10,-30)  node[anchor=west]  {$\bsym{\ell=3}$};
\path (0,-30)  node[anchor=west] (S6-6) {\yng(6)};
\path (12,-30)  node[anchor=west] (S51-6) {\yng(5,1)};
\path (23,-30)  node[anchor=west] (S42-6) {\yng(4,2)};
\path (33,-30)  node[anchor=west] (S411-6) {\yng(4,1,1)};
\path (42,-30)  node[anchor=west] (S33-6) {\yng(3,3)};
\path (49,-30)  node[anchor=west] (S321-6) {\yng(3,2,1)};
\path (56,-30)  node[anchor=west] (S3111-6) {\yng(3,1,1,1)};
\draw (S6-6)      node[below=8pt, right=-7pt,black]{\nmm 1};
\draw (S51-6)    node[below=8pt, right=-10pt,black] {\nmm 4};
\draw (S42-6)    node[below=8pt, right=-4pt,black] {\nmm 3};
\draw (S411-6)   node[below=8pt, right=-7pt,black]  {\nmm 3};
\draw (S33-6) node[below=12pt, right=-9pt,black] {\nmm 1};
\draw (S321-6) node[below=12pt, right=-5pt,black] {\nmm 2};
\draw (S3111-6) node[below=8pt, right=-5pt,black] {\nmm 1};
\path (-10,-35)  node[anchor=west]  {$\bsym{\ell=\frac{7}{2}}$};
\path (6,-35)  node[anchor=west] (S5-7) {\yng(5)};
\path (19,-35)  node[anchor=west] (S41-7) {\yng(4,1)};
\path (32,-35)  node[anchor=west] (S32-7) {\yng(3,2)};
\path (44,-35)  node[anchor=west] (S311-7) {\yng(3,1,1)};
\path (56,-35)  node[anchor=west] (S221-7) {\yng(2,2,1)};
\path (64,-35)  node[anchor=west] (S2111-7) {\yng(2,1,1,1)};
\draw (S5-7)  node[below=8pt, right=2pt,black] {\nmm 5};
\draw (S41-7)  node[below=6pt, right=-7pt,black] {\nmm 10};
\draw (S32-7)  node[below=12pt, right=-7pt,black] {\nmm 6};
\draw (S311-7)  node[below=8pt, right=-4pt,black] {\nmm 6};
\draw (S221-7)  node[below=15pt, right=-6pt,black]{\nmm 2};
\draw (S2111-7)  node[below=8pt, right=0pt,black]{\nmm 1};
\path (-10,-40)  node[anchor=west]  {$\bsym{\ell=4}$};
\path (0,-40)  node[anchor=west] (S6-8) {\yng(6)};
\path (12,-40)  node[anchor=west] (S51-8) {\yng(5,1)};
\path (23,-40)  node[anchor=west] (S42-8) {\yng(4,2)};
\path (33,-40)  node[anchor=west] (S411-8) {\yng(4,1,1)};
\path (42,-40)  node[anchor=west] (S33-8) {\yng(3,3)};
\path (49,-40)  node[anchor=west] (S321-8) {\yng(3,2,1)};
\path (56,-40)  node[anchor=west] (S3111-8) {\yng(3,1,1,1)};
\path (63,-40)  node[anchor=west] (S222-8) {\yng(2,2,2)};
\path (69,-40)  node[anchor=west] (S2211-8) {\yng(2,2,1,1)};
\path (75,-40)  node[anchor=west] (S21111-8) {\yng(2,1,1,1,1)};
\draw (S6-8)          node[below=12pt, right=-4pt,black]  {\nmm 4};
\draw (S51-8)        node[below=12pt, right=-4pt,black] {\nmm 11};
\draw (S42-8)        node[below=12pt, right=-4pt,black] {\nmm 13};
\draw (S411-8)      node[below=12pt, right=-4pt,black] {\nmm 13};
\draw (S33-8)        node[below=12pt, right=-4pt,black] {\nmm 5};
\draw (S321-8)      node[below=12pt, right=-4pt,black] {\nmm 12};
\draw (S3111-8)    node[below=12pt, right=-4pt,black] {\nmm 6};
\draw (S222-8)     node[below=12pt, right=3pt,black] {\nmm 2};
\draw (S2211-8)    node[below=12pt, right=-2pt,black] {\nmm 3};
\draw (S21111-8)  node[below=12pt, right=-2pt,black] {\nmm 1};
\draw (95,0) node[anchor=east,black] {$~~{\mathbf{1}}$};
\draw (95,-5) node[anchor=east,black] {$~~{\mathbf{1}}$};
\draw (95,-10) node[anchor=east,black] {$~~{\mathbf{1}}$};
\draw (95,-15) node[anchor=east,black] {$~~{\mathbf{2}}$};
\draw (95,-20) node[anchor=east,black] {$~~{\mathbf{4}}$};
\draw (95,-25) node[anchor=east,black] {$~~{\mathbf{15}}$};
\draw (95,-30) node[anchor=east,black] {$~~{\mathbf{41}}$};
\draw (95,-35) node[anchor=east,black] {$~~{\mathbf{202}}$};
\draw (95,-40) node[anchor=east,black] {$~~{\mathbf{694}}$};

\path  (S6-0) edge[black,thick] (S5-1);
\path  (S6-2) edge[black,thick] (S5-1);
\path  (S51-2) edge[black,thick] (S5-1);
\path  (S6-2) edge[black,thick] (S5-3);
\path  (S51-2) edge[black,thick] (S5-3);
\path  (S51-2) edge[black,thick] (S41-3);
\path  (S6-4) edge[black,thick] (S5-3);
\path  (S51-4) edge[black,thick] (S5-3);
\path  (S51-4) edge[black,thick] (S41-3);
\path  (S42-4) edge[black,thick] (S41-3);
\path  (S411-4) edge[black,thick] (S41-3);
\path  (S6-4) edge[black,thick] (S5-5);
\path  (S51-4) edge[black,thick] (S5-5);
\path  (S51-4) edge[black,thick] (S41-5);
\path  (S42-4) edge[black,thick] (S41-5);
\path  (S42-4) edge[black,thick] (S32-5);
\path  (S411-4) edge[black,thick] (S41-5);
\path  (S411-4) edge[black,thick] (S311-5);
\path  (S6-6) edge[black,thick] (S5-5);
\path  (S51-6) edge[black,thick] (S5-5);
\path  (S51-6) edge[black,thick] (S41-5);
\path  (S42-6) edge[black,thick] (S41-5);
\path  (S42-6) edge[black,thick] (S32-5);
\path  (S411-6) edge[black,thick] (S41-5);
\path  (S411-6) edge[black,thick] (S311-5);
\path  (S33-6) edge[black,thick] (S32-5);
\path  (S321-6) edge[black,thick] (S32-5);
\path  (S321-6) edge[black,thick] (S311-5);
\path  (S3111-6) edge[black,thick] (S311-5);
\path  (S6-6) edge[black,thick] (S5-7);
\path  (S51-6) edge[black,thick] (S5-7);
\path  (S51-6) edge[black,thick] (S41-7);
\path  (S42-6) edge[black,thick] (S41-7);
\path  (S42-6) edge[black,thick] (S32-7);
\path  (S33-6) edge[black,thick] (S32-7);
\path  (S321-6) edge[black,thick] (S32-7);
\path  (S321-6) edge[black,thick] (S311-7);
\path  (S321-6) edge[black,thick] (S221-7);
\path  (S411-6) edge[black,thick] (S41-7);
\path  (S411-6) edge[black,thick] (S311-7);
\path  (S3111-6) edge[black,thick] (S311-7);
\path  (S3111-6) edge[black,thick] (S2111-7);
\path  (S6-8) edge[black,thick] (S5-7);
\path  (S51-8) edge[black,thick] (S5-7);
\path  (S51-8) edge[black,thick] (S41-7);
\path  (S42-8) edge[black,thick] (S41-7);
\path  (S411-8) edge[black,thick] (S41-7);
\path  (S42-8) edge[black,thick] (S32-7);
\path  (S33-8) edge[black,thick] (S32-7);
\path  (S321-8) edge[black,thick] (S32-7);
\path  (S411-8) edge[black,thick] (S311-7);
\path  (S321-8) edge[black,thick] (S311-7);
\path  (S321-8) edge[black,thick] (S221-7);
\path  (S222-8) edge[black,thick] (S221-7);
\path  (S2211-8) edge[black,thick] (S221-7);
\path  (S411-8) edge[black,thick] (S311-7);
\path  (S3111-8) edge[black,thick] (S311-7);
\path  (S3111-8) edge[black,thick] (S2111-7);
\path  (S2211-8) edge[black,thick] (S2111-7);
\path  (S21111-8) edge[black,thick] (S2111-7);
\end{tikzpicture}
$$ 

 \newpage

 \subsection{Levels  \ $\ell = 0,\half,1,\ldots,\frac{7}{2},4$ of the  Bratteli diagram $\mathcal{B}(\A_6, \A_5)$}  \label{sec:BratteliA6A5}
 
 \bigskip
 
\Yboxdim{6pt}
\Ylinethick{.6pt}

$$
\begin{tikzpicture}[line width=.5pt,xscale=0.16,yscale=0.35]
\path (-10,0)  node[anchor=west]  {$\bsym{\ell=0}$};
\path (0,0)  node[anchor=west] (S6-0) {\yng(6)};
\draw (S6-0) node[below=4pt, black] {\nmm 1};
\path (-10,-5)  node[anchor=west]  {$\bsym{\ell=\frac{1}{2}}$};
\path (6,-5)  node[anchor=west] (S5-1) {\yng(5)};
\draw (S5-1) node[below=4pt,black] {\nmm 1};
\path (-10,-10)  node[anchor=west]  {$\bsym{\ell=1}$};
\path (0,-10)  node[anchor=west] (S6-2) {\yng(6)};
\path (12,-10)  node[anchor=west] (S51-2) {\yng(5,1)};
\draw (S6-2) node[below=8pt,right=-7pt,black] {\nmm 1};
\draw (S51-2) node[below=8pt, right=-6pt,black] {\nmm 1};
\path (-10,-15)  node[anchor=west]  {$\bsym{\ell=\frac{3}{2}}$};
\path (6,-15)  node[anchor=west] (S5-3) {\yng(5)};
\path (19,-15)  node[anchor=west] (S41-3) {\yng(4,1)};
\draw (S5-3) node[below=8pt, right=-7pt,black] {\nmm 2};
\draw (S41-3) node[below=8pt, right=-6pt,black] {\nmm 1};
\path (-10,-20)  node[anchor=west]  {$\bsym{\ell=2}$};
\path (0,-20)  node[anchor=west] (S6-4) {\yng(6)};
\path (12,-20)  node[anchor=west] (S51-4) {\yng(5,1)};
\path (23,-20)  node[anchor=west] (S42-4) {\yng(4,2)};
\path (33,-20)  node[anchor=west] (S411-4) {\yng(4,1,1)};
\draw (S6-4)    node[below=8pt, right=-7pt,black]{\nmm 2};
\draw (S51-4)  node[below=8pt, right=-7pt,black] {\nmm 3};
\draw (S42-4)  node[below=8pt, right=-3pt,black] {\nmm 1};
\draw (S411-4) node[below=8pt, right=-6pt,black] {\nmm 1};
\path (-10,-25)  node[anchor=west]  {$\bsym{\ell=\frac{5}{2}}$};
\path (6,-25)  node[anchor=west] (S5-5) {\yng(5)};
\path (19,-25)  node[anchor=west] (S41-5) {\yng(4,1)};
\path (35,-25)  node[anchor=west] (S32-5) {\yng(3,2)};
\path (49,-25)  node[anchor=west] (S311p-5) {$\yng(3,1,1)^{\bsym{+}}$};
\path (58,-25)  node[anchor=west] (S311m-5) {$\yng(3,1,1)^{\bsym{-}}$};
\draw (S5-5) node[below=8pt, right=-7pt,black]{\nmm 5};
\draw (S41-5) node[below=8pt, right=-7pt,black] {\nmm 5};
\draw (S32-5) node[below=8pt, right=0pt,black] {\nmm 1};
\draw (S311p-5) node[below=8pt, right=-5pt,black] {\nmm 1};
\draw (S311m-5) node[below=8pt, right=-5pt,black] {\nmm 1};
\path (-10,-30)  node[anchor=west]  {$\bsym{\ell=3}$};
\path (0,-30)  node[anchor=west] (S6-6) {\yng(6)};
\path (12,-30)  node[anchor=west] (S51-6) {\yng(5,1)};
\path (23,-30)  node[anchor=west] (S42-6) {\yng(4,2)};
\path (33,-30)  node[anchor=west] (S411-6) {\yng(4,1,1)};
\path (42,-30)  node[anchor=west] (S33-6) {\yng(3,3)};
\path (49,-30)  node[anchor=west] (S321p-6) {$\yng(3,2,1)^{\bsym{+}}$};
\path (58,-30)  node[anchor=west] (S321m-6) {$\yng(3,2,1)^{\bsym{-}}$};
\draw (S6-6)      node[below=8pt, right=-7pt,black]{\nmm 5};
\draw (S51-6)    node[below=8pt, right=-10pt,black] {\nmm 10};
\draw (S42-6)    node[below=8pt, right=-4pt,black] {\nmm 6};
\draw (S411-6)   node[below=8pt, right=-7pt,black]  {\nmm 7};
\draw (S33-6) node[below=12pt, right=-9pt,black] {\nmm 1};
\draw (S321p-6) node[below=12pt, right=-5pt,black] {\nmm 2};
\draw (S321m-6) node[below=8pt, right=-5pt,black] {\nmm 2};
\path (-10,-35)  node[anchor=west]  {$\bsym{\ell=\frac{7}{2}}$};
\path (6,-35)  node[anchor=west] (S5-7) {\yng(5)};
\path (19,-35)  node[anchor=west] (S41-7) {\yng(4,1)};
\path (35,-35)  node[anchor=west] (S32-7) {\yng(3,2)};
\path (49,-35)  node[anchor=west] (S311p-7) {$\yng(3,1,1)^{\bsym{+}}$};
\path (58,-35)  node[anchor=west] (S311m-7) {$\yng(3,1,1)^{\bsym{-}}$};
\draw (S5-7)  node[below=8pt, right=2pt,black] {\nmm 15};
\draw (S41-7)  node[below=6pt, right=-7pt,black] {\nmm 23};
\draw (S32-7)  node[below=12pt, right=-7pt,black] {\nmm 11};
\draw (S311p-7)  node[below=8pt, right=-4pt,black] {\nmm 9};
\draw (S311m-7)  node[below=8pt, right=-4pt,black] {\nmm 9};
\path (-10,-40)  node[anchor=west]  {$\bsym{\ell=4}$};
\path (0,-40)  node[anchor=west] (S6-8) {\yng(6)};
\path (12,-40)  node[anchor=west] (S51-8) {\yng(5,1)};
\path (23,-40)  node[anchor=west] (S42-8) {\yng(4,2)};
\path (33,-40)  node[anchor=west] (S411-8) {\yng(4,1,1)};
\path (42,-40)  node[anchor=west] (S33-8) {\yng(3,3)};
\path (49,-40)  node[anchor=west] (S321p-8) {$\yng(3,2,1)^{\bsym{+}}$};
\path (58,-40)  node[anchor=west] (S321m-8) {$\yng(3,2,1)^{\bsym{-}}$};
\draw (S6-8)          node[below=12pt, right=-4pt,black]  {\nmm 15};
\draw (S51-8)        node[below=12pt, right=-4pt,black] {\nmm 38};
\draw (S42-8)        node[below=12pt, right=-4pt,black] {\nmm 34};
\draw (S411-8)      node[below=12pt, right=-4pt,black] {\nmm 41};
\draw (S33-8)        node[below=12pt, right=-4pt,black] {\nmm 11};
\draw (S321p-8)      node[below=12pt, right=-4pt,black] {\nmm 20};
\draw (S321m-8)      node[below=12pt, right=-4pt,black] {\nmm 20};
\draw (80, 0) node[anchor=east,black] {$~~{\mathbf{1}}$};
\draw (80,-5) node[anchor=east,black] {$~~{\mathbf{1}}$};
\draw (80,-10) node[anchor=east,black] {$~~{\mathbf{2}}$};
\draw (80,-15) node[anchor=east,black] {$~~{\mathbf{5}}$};
\draw (80,-20) node[anchor=east,black] {$~~{\mathbf{15}}$};
\draw (80,-25) node[anchor=east,black] {$~~{\mathbf{53}}$};
\draw (80,-30) node[anchor=east,black] {$~~{\mathbf{219}}$};
\draw (80,-35) node[anchor=east,black] {$~~{\mathbf{1037}}$};
\draw (80,-40) node[anchor=east,black] {$~~{\mathbf{5427}}$};
\path  (S6-0) edge[black,thick] (S5-1);
\path  (S6-2) edge[black,thick] (S5-1);
\path  (S51-2) edge[black,thick] (S5-1);
\path  (S6-2) edge[black,thick] (S5-3);
\path  (S51-2) edge[black,thick] (S5-3);
\path  (S51-2) edge[black,thick] (S41-3);
\path  (S6-4) edge[black,thick] (S5-3);
\path  (S51-4) edge[black,thick] (S5-3);
\path  (S51-4) edge[black,thick] (S41-3);
\path  (S42-4) edge[black,thick] (S41-3);
\path  (S411-4) edge[black,thick] (S41-3);
\path  (S6-4) edge[black,thick] (S5-5);
\path  (S51-4) edge[black,thick] (S5-5);
\path  (S51-4) edge[black,thick] (S41-5);
\path  (S42-4) edge[black,thick] (S41-5);
\path  (S42-4) edge[black,thick] (S32-5);
\path  (S411-4) edge[black,thick] (S41-5);
\path  (S411-4) edge[black,thick] (S311p-5);
\path  (S411-4) edge[black,thick] (S311m-5);
\path  (S6-6) edge[black,thick] (S5-5);
\path  (S51-6) edge[black,thick] (S5-5);
\path  (S51-6) edge[black,thick] (S41-5);
\path  (S42-6) edge[black,thick] (S41-5);
\path  (S42-6) edge[black,thick] (S32-5);
\path  (S411-6) edge[black,thick] (S41-5);
\path  (S411-6) edge[black,thick] (S311p-5);
\path  (S411-6) edge[black,thick] (S311m-5);
\path  (S33-6) edge[black,thick] (S32-5);
\path  (S321p-6) edge[black,thick] (S32-5);
\path  (S321m-6) edge[black,thick] (S32-5);
\path  (S321p-6) edge[black,thick] (S311p-5);
\path  (S321m-6) edge[black,thick] (S311m-5);
\path  (S6-6) edge[black,thick] (S5-7);
\path  (S51-6) edge[black,thick] (S5-7);
\path  (S51-6) edge[black,thick] (S41-7);
\path  (S42-6) edge[black,thick] (S41-7);
\path  (S42-6) edge[black,thick] (S32-7);
\path  (S33-6) edge[black,thick] (S32-7);
\path  (S321p-6) edge[black,thick] (S32-7);
\path  (S321m-6) edge[black,thick] (S32-7);
\path  (S321p-6) edge[black,thick] (S311p-7);
\path  (S321m-6) edge[black,thick] (S311m-7);
\path  (S411-6) edge[black,thick] (S41-7);
\path  (S411-6) edge[black,thick] (S311p-7);
\path  (S411-6) edge[black,thick] (S311m-7);
\path  (S6-8) edge[black,thick] (S5-7);
\path  (S51-8) edge[black,thick] (S5-7);
\path  (S51-8) edge[black,thick] (S41-7);
\path  (S42-8) edge[black,thick] (S41-7);
\path  (S411-8) edge[black,thick] (S41-7);
\path  (S42-8) edge[black,thick] (S32-7);
\path  (S33-8) edge[black,thick] (S32-7);
\path  (S321p-8) edge[black,thick] (S32-7);
\path  (S411-8) edge[black,thick] (S311p-7);
\path  (S321p-8) edge[black,thick] (S311p-7);
\path  (S411-8) edge[black,thick] (S311p-7);
\path  (S321m-8) edge[black,thick] (S32-7);
\path  (S411-8) edge[black,thick] (S311m-7);
\path  (S321m-8) edge[black,thick] (S311m-7);
\path  (S411-8) edge[black,thick] (S311m-7);
\end{tikzpicture}
$$

\newpage
  
  \subsection{Levels  \ $\ell = 0,\half,1,\ldots,\frac{7}{2},4$ of the quasi-Bratteli diagram $\mathcal{QB}(\A_6,\A_5)$} \label{sec:QBratteliA6A5}
\bigskip

To calculate the subscripts on the half-integer rows, use Pascal addition of the subscripts from the row above. To calculate the subscripts on integer level rows, first use Pascal addition from the row above,  and then subtract the subscript on the same partition from two rows above.
\medskip
  
\Yboxdim{6pt}
\Ylinethick{.6pt}

$$
\begin{tikzpicture}[line width=.5pt,xscale=0.16,yscale=0.35]
\path (-10,0)  node[anchor=west]  {$\bsym{\ell=0}$};
\path (0,0)  node[anchor=west] (S6-0) {\yng(6)};
\draw (S6-0) node[below=4pt, black] {\nmm 1};
\path (-10,-5)  node[anchor=west]  {$\bsym{\ell=\frac{1}{2}}$};
\path (6,-5)  node[anchor=west] (S5-1) {\yng(5)};
\draw (S5-1) node[below=4pt,black] {\nmm 1};
\path (-10,-10)  node[anchor=west]  {$\bsym{\ell=1}$};
\path (0,-10)  node[anchor=west] (S6-2) {\yng(6)};
\path (12,-10)  node[anchor=west] (S51-2) {\yng(5,1)};
\draw (S6-2) node[below=8pt,right=-7pt,black] {\nmm 0};
\draw (S51-2) node[below=8pt, right=-6pt,black] {\nmm 1};
\path (-10,-15)  node[anchor=west]  {$\bsym{\ell=\frac{3}{2}}$};
\path (6,-15)  node[anchor=west] (S5-3) {\yng(5)};
\path (19,-15)  node[anchor=west] (S41-3) {\yng(4,1)};
\draw (S5-3) node[below=8pt, right=-7pt,black] {\nmm 1};
\draw (S41-3) node[below=8pt, right=-6pt,black] {\nmm 1};
\path (-10,-20)  node[anchor=west]  {$\bsym{\ell=2}$};
\path (0,-20)  node[anchor=west] (S6-4) {\yng(6)};
\path (12,-20)  node[anchor=west] (S51-4) {\yng(5,1)};
\path (23,-20)  node[anchor=west] (S42-4) {\yng(4,2)};
\path (33,-20)  node[anchor=west] (S411-4) {\yng(4,1,1)};
\draw (S6-4)    node[below=8pt, right=-7pt,black]{\nmm 1};
\draw (S51-4)  node[below=8pt, right=-7pt,black] {\nmm 1};
\draw (S42-4)  node[below=8pt, right=-3pt,black] {\nmm 1};
\draw (S411-4) node[below=8pt, right=-6pt,black] {\nmm 1};
\path (-10,-25)  node[anchor=west]  {$\bsym{\ell=\frac{5}{2}}$};
\path (6,-25)  node[anchor=west] (S5-5) {\yng(5)};
\path (19,-25)  node[anchor=west] (S41-5) {\yng(4,1)};
\path (35,-25)  node[anchor=west] (S32-5) {\yng(3,2)};
\path (49,-25)  node[anchor=west] (S311p-5) {$\yng(3,1,1)^{\bsym{+}}$};
\path (58,-25)  node[anchor=west] (S311m-5) {$\yng(3,1,1)^{\bsym{-}}$};
\draw (S5-5) node[below=8pt, right=-7pt,black]{\nmm 2};
\draw (S41-5) node[below=8pt, right=-7pt,black] {\nmm 3};
\draw (S32-5) node[below=8pt, right=0pt,black] {\nmm 1};
\draw (S311p-5) node[below=8pt, right=-5pt,black] {\nmm 1};
\draw (S311m-5) node[below=8pt, right=-5pt,black] {\nmm 1};
\path (-10,-30)  node[anchor=west]  {$\bsym{\ell=3}$};
\path (0,-30)  node[anchor=west] (S6-6) {\yng(6)};
\path (12,-30)  node[anchor=west] (S51-6) {\yng(5,1)};
\path (23,-30)  node[anchor=west] (S42-6) {\yng(4,2)};
\path (33,-30)  node[anchor=west] (S411-6) {\yng(4,1,1)};
\path (42,-30)  node[anchor=west] (S33-6) {\yng(3,3)};
\path (49,-30)  node[anchor=west] (S321p-6) {$\yng(3,2,1)^{\bsym{+}}$};
\path (58,-30)  node[anchor=west] (S321m-6) {$\yng(3,2,1)^{\bsym{-}}$};
\draw (S6-6)      node[below=8pt, right=-7pt,black]{\nmm 1};
\draw (S51-6)    node[below=8pt, right=-10pt,black] {\nmm 4};
\draw (S42-6)    node[below=8pt, right=-4pt,black] {\nmm 3};
\draw (S411-6)   node[below=8pt, right=-7pt,black]  {\nmm 4};
\draw (S33-6) node[below=12pt, right=-9pt,black] {\nmm 1};
\draw (S321p-6) node[below=12pt, right=-5pt,black] {\nmm 2};
\draw (S321m-6) node[below=8pt, right=-5pt,black] {\nmm 2};
\path (-10,-35)  node[anchor=west]  {$\bsym{\ell=\frac{7}{2}}$};
\path (6,-35)  node[anchor=west] (S5-7) {\yng(5)};
\path (19,-35)  node[anchor=west] (S41-7) {\yng(4,1)};
\path (35,-35)  node[anchor=west] (S32-7) {\yng(3,2)};
\path (49,-35)  node[anchor=west] (S311p-7) {$\yng(3,1,1)^{\bsym{+}}$};
\path (58,-35)  node[anchor=west] (S311m-7) {$\yng(3,1,1)^{\bsym{-}}$};
\draw (S5-7)  node[below=8pt, right=2pt,black] {\nmm 5};
\draw (S41-7)  node[below=6pt, right=-7pt,black] {\nmm 11};
\draw (S32-7)  node[below=12pt, right=-7pt,black] {\nmm 8};
\draw (S311p-7)  node[below=8pt, right=-4pt,black] {\nmm 6};
\draw (S311m-7)  node[below=8pt, right=-4pt,black] {\nmm 6};
\path (-10,-40)  node[anchor=west]  {$\bsym{\ell=4}$};
\path (0,-40)  node[anchor=west] (S6-8) {\yng(6)};
\path (12,-40)  node[anchor=west] (S51-8) {\yng(5,1)};
\path (23,-40)  node[anchor=west] (S42-8) {\yng(4,2)};
\path (33,-40)  node[anchor=west] (S411-8) {\yng(4,1,1)};
\path (42,-40)  node[anchor=west] (S33-8) {\yng(3,3)};
\path (49,-40)  node[anchor=west] (S321p-8) {$\yng(3,2,1)^{\bsym{+}}$};
\path (58,-40)  node[anchor=west] (S321m-8) {$\yng(3,2,1)^{\bsym{-}}$};
\draw (S6-8)          node[below=12pt, right=-4pt,black]  {\nmm 4};
\draw (S51-8)        node[below=12pt, right=-4pt,black] {\nmm 12};
\draw (S42-8)        node[below=12pt, right=-4pt,black] {\nmm 16};
\draw (S411-8)      node[below=12pt, right=-4pt,black] {\nmm 19};
\draw (S33-8)        node[below=12pt, right=-4pt,black] {\nmm 7};
\draw (S321p-8)      node[below=12pt, right=-4pt,black] {\nmm 12};
\draw (S321m-8)      node[below=12pt, right=-4pt,black] {\nmm 12};
\draw (75, 0) node[anchor=east,black] {$~~{\mathbf{1}}$};
\draw (75,-5) node[anchor=east,black] {$~~{\mathbf{1}}$};
\draw (75,-10) node[anchor=east,black] {$~~{\mathbf{1}}$};
\draw (75,-15) node[anchor=east,black] {$~~{\mathbf{2}}$};
\draw (75,-20) node[anchor=east,black] {$~~{\mathbf{4}}$};
\draw (75,-25) node[anchor=east,black] {$~~{\mathbf{16}}$};
\draw (75,-30) node[anchor=east,black] {$~~{\mathbf{51}}$};
\draw (75,-35) node[anchor=east,black] {$~~{\mathbf{282}}$};
\draw (75,-40) node[anchor=east,black] {$~~{\mathbf{1114}}$};
\path  (S6-0) edge[black,thick] (S5-1);
\path  (S6-2) edge[black,thick] (S5-1);
\path  (S51-2) edge[black,thick] (S5-1);
\path  (S6-2) edge[black,thick] (S5-3);
\path  (S51-2) edge[black,thick] (S5-3);
\path  (S51-2) edge[black,thick] (S41-3);
\path  (S6-4) edge[black,thick] (S5-3);
\path  (S51-4) edge[black,thick] (S5-3);
\path  (S51-4) edge[black,thick] (S41-3);
\path  (S42-4) edge[black,thick] (S41-3);
\path  (S411-4) edge[black,thick] (S41-3);
\path  (S6-4) edge[black,thick] (S5-5);
\path  (S51-4) edge[black,thick] (S5-5);
\path  (S51-4) edge[black,thick] (S41-5);
\path  (S42-4) edge[black,thick] (S41-5);
\path  (S42-4) edge[black,thick] (S32-5);
\path  (S411-4) edge[black,thick] (S41-5);
\path  (S411-4) edge[black,thick] (S311p-5);
\path  (S411-4) edge[black,thick] (S311m-5);
\path  (S6-6) edge[black,thick] (S5-5);
\path  (S51-6) edge[black,thick] (S5-5);
\path  (S51-6) edge[black,thick] (S41-5);
\path  (S42-6) edge[black,thick] (S41-5);
\path  (S42-6) edge[black,thick] (S32-5);
\path  (S411-6) edge[black,thick] (S41-5);
\path  (S411-6) edge[black,thick] (S311p-5);
\path  (S411-6) edge[black,thick] (S311m-5);
\path  (S33-6) edge[black,thick] (S32-5);
\path  (S321p-6) edge[black,thick] (S32-5);
\path  (S321m-6) edge[black,thick] (S32-5);
\path  (S321p-6) edge[black,thick] (S311p-5);
\path  (S321m-6) edge[black,thick] (S311m-5);
\path  (S6-6) edge[black,thick] (S5-7);
\path  (S51-6) edge[black,thick] (S5-7);
\path  (S51-6) edge[black,thick] (S41-7);
\path  (S42-6) edge[black,thick] (S41-7);
\path  (S42-6) edge[black,thick] (S32-7);
\path  (S33-6) edge[black,thick] (S32-7);
\path  (S321p-6) edge[black,thick] (S32-7);
\path  (S321m-6) edge[black,thick] (S32-7);
\path  (S321p-6) edge[black,thick] (S311p-7);
\path  (S321m-6) edge[black,thick] (S311m-7);
\path  (S411-6) edge[black,thick] (S41-7);
\path  (S411-6) edge[black,thick] (S311p-7);
\path  (S411-6) edge[black,thick] (S311m-7);
\path  (S6-8) edge[black,thick] (S5-7);
\path  (S51-8) edge[black,thick] (S5-7);
\path  (S51-8) edge[black,thick] (S41-7);
\path  (S42-8) edge[black,thick] (S41-7);
\path  (S411-8) edge[black,thick] (S41-7);
\path  (S42-8) edge[black,thick] (S32-7);
\path  (S33-8) edge[black,thick] (S32-7);
\path  (S321p-8) edge[black,thick] (S32-7);
\path  (S411-8) edge[black,thick] (S311p-7);
\path  (S321p-8) edge[black,thick] (S311p-7);
\path  (S411-8) edge[black,thick] (S311p-7);
\path  (S321m-8) edge[black,thick] (S32-7);
\path  (S411-8) edge[black,thick] (S311m-7);
\path  (S321m-8) edge[black,thick] (S311m-7);
\path  (S411-8) edge[black,thick] (S311m-7);
\end{tikzpicture}
$$ 

  \end{document}